\documentclass[12pt]{amsart}
\usepackage[lite]{amsrefs}
\usepackage{amssymb, amsmath, enumitem}
\usepackage{comment}


\newcommand{\R}{\mathbb{R}}

\newcommand{\N}{\mathbb{N}}

\newcommand{\W}{{\bf{W}}}
\newcommand{\I}{{\bf{I}}}
\newcommand{\g}{{\bf{G}}}

\usepackage{hyperref}
\numberwithin{equation}{section}

\theoremstyle{plain}
\newtheorem{Thm}{Theorem}[section]

\newtheorem{Lem}[Thm]{Lemma}

\theoremstyle{definition}
\newtheorem{Def}[Thm]{Definition}
\newtheorem{Rem}[Thm]{Remark}

\theoremstyle{remark}

\begin{document}
\title[Finite Energy Solutions to Nonlinear Equations]{Finite Energy Solutions to Inhomogeneous Nonlinear Elliptic Equations with \\Sub-Natural Growth Terms}
\author{Adisak Seesanea}
\address{Department of Mathematics, University of Missouri, Columbia, MO 65211, USA}
\email{asrt8@mail.missouri.edu}
\thanks{A. S. is partially supported by Development and Promotion of Science and Technology Talents Project, Thailand (DPST)}

\author{Igor E. Verbitsky}
\address{Department of Mathematics, University of Missouri, Columbia, MO 65211, USA}
\email{verbitskyi@missouri.edu}
\thanks{}
\begin{abstract}
We obtain necessary and sufficient conditions for the existence of a positive finite energy solution to the inhomogeneous quasilinear elliptic equation
\[
-\Delta_{p} u = \sigma u^{q} + \mu \quad \text{on} \;\; \mathbb{R}^n
\]
in the sub-natural growth case $0<q<p-1$, where $\Delta_{p}$ ($1<p<\infty$) is the $p$-Laplacian, and $\sigma$, $\mu$ are positive Borel measures on 
$\mathbb{R}^n$. Uniqueness of such a solution is established as well. 

Similar inhomogeneous problems in the sublinear case $0<q<1$ are treated for the fractional Laplace operator  $(-\Delta)^{\alpha}$  in place of $-\Delta_{p}$, on $\mathbb{R}^n$ for $0<\alpha<\frac{n}{2}$, and   on an arbitrary domain $\Omega \subset \R^n$ with positive Green's function in the classical case $\alpha = 1$.
 \end{abstract}
\subjclass[2010]{Primary 35J92. Secondary 35J20, 42B37, 49J40} 
\keywords{Quasilinear elliptic equation, finite energy solution, $p$-Laplacian, Wolff potential, fractional Laplacian, Green's function}
\maketitle
\section{Introduction}\label{sec:intro}
We consider the quasilinear elliptic equation
\begin{equation}\label{main_eq_p-lapacain}
-\Delta_{p} u = \sigma u^{q} +\mu \quad \text{on} \;\; \mathbb{R}^n
\end{equation}
in the sub-natural growth case $0<q<p-1$.

 Here $\Delta_{p}u = \nabla \cdot \left(  \vert \nabla u \vert^{p-2} \nabla u \right)$ is the $p$-Laplacian with $1<p<\infty$, and $\sigma$, $\mu$ are nontrivial nonnegative locally integrable functions on $\R^n$, or more generally, nonnegative locally finite Borel measures on $\R^n$ (in brief $\sigma, \mu \in \mathcal{M}^{+}(\R^n)$) such that $\sigma\not=0$ and
 $\mu\not=0$. The  homogeneous case $\mu=0$ was considered earlier in 
 \cite{CV14a}. However, treating  general  data $\mu\ge 0$ leads to some new phenomena 
 involving possible interaction between $\mu$ and $\sigma$. 
 
 We establish necessary and sufficient conditions on both $\sigma$ and $\mu$ for the existence of a positive finite energy solution $u$ to  \eqref{main_eq_p-lapacain}, so that $\int_{\R^n} |\nabla u |^{p}\;dx < +\infty$ (see Definition \ref{Def_Sol_1}), and prove its uniqueness. 
 
 Our methods are also applicable to the existence problem for positive finite energy solutions $u \in \dot{H}^{\alpha}(\R^n)$, so that $\int_{\R^n} | (-\Delta)^{\frac{\alpha}{2}}u|^{2}\;dx < +
\infty$ (see Definition \ref{Def_Sol_2}), to the fractional Laplace equation 
\begin{equation} \label{main_eq_frac_laplacian}
\left(-\Delta \right)^{\alpha} u = \sigma u^{q} + \mu \qquad \text{in} \;\; \mathbb{R}^n,
\end{equation}
where $0<q<1$ and $\left(-\Delta \right)^{\alpha} $ is the fractional Laplacian with $0<\alpha<\frac{n}{2}$. Uniqueness of such a solution  is proved in the case $0<\alpha\leq1$. 

In the classical case $\alpha=1$, our approach is employed to obtain the existence and uniqueness of a positive finite energy solution $u\in \dot{W}^{1,2}_0(\Omega)$, such that 
$\int_{\Omega} |\nabla u |^{2}\;dx < +\infty$ (see Definition \ref{Def_Sol_1} in the case $p=2$), to the equation
\begin{equation} \label{main_eq_frac_laplacian_domain}
-\Delta u = \sigma u^{q} + \mu \quad \text{in} \;\; \Omega,
\end{equation}
where $0<q<1$ and $\Omega \subset \R^n$ is an arbitrary domain (possibly unbounded) which possesses a positive Green's function. 
The existence of positive weak solutions to \eqref{main_eq_frac_laplacian_domain}, not necessarily of finite energy, is discussed in \cite{QV16}, \cite{QV17}. 

We would like to  point out that the existence and uniqueness of \textit{bounded} solutions to 
\eqref{main_eq_frac_laplacian_domain} 
on $\Omega=\R^n$ in the case where $\mu$ is a nonnegative constant was characterized in 
\cite{BK92}. 

As was mentioned above, this work has been motivated by the results of Cao and Verbitsky \cite{CV14a}, who proved that there exists a unique positive finite energy solution $u$  to the \textit{homogeneous} equation
\begin{equation}\label{homo_eq_p-lapacain}
-\Delta_{p} u = \sigma u^{q}  \quad \text{in} \;\; \mathbb{R}^n,
\end{equation}
where $1<p<\infty$, $0<q<p-1$ and $\sigma \in \mathcal{M}^{+}(\mathbb{R}^n)$, 
if and only if 
\begin{equation}\label{condition_sigma_alpha=1}
\W_{1, p}\sigma \in L^{\frac{(1+q)(p-1)}{p-1-q}}(\R^n, d\sigma).
\end{equation}
 Here, for $1<p<\infty$, $0<\alpha<\frac{n}{p}$ and $\sigma \in \mathcal{M}^{+}(\R^n)$, the (homogeneous) Wolff potential $\W_{\alpha, p}\sigma$  is defined by \cite{HW83} 
\[
\W_{\alpha, p}\sigma(x) = \int_{0}^{\infty} \left[ \frac{\sigma(B(x,r))}{r^{n-\alpha p}}\right]^{\frac{1}{p-1}}\; \frac{dr}{r}, \quad x \in \R^n,
\]
where $B(x,r) = \{ y \in \R^n : |x-y|< r \}$ is a ball centered at $x \in \R^n$ of radius $r >0$. Notice that $\W_{\alpha, p}\sigma=+\infty$ for $\alpha \ge \frac{n}{p}$ unless 
$\sigma=0$. (See \cite{AH96}, \cite{KuMi14} for an overview of Wolff potentials and their applications in Analysis and PDE.)

For $1\le p<\infty$ and a nonempty open set $\Omega \subset \R^n$, by $\dot{W}^{1,p}_{0}(\Omega)$ we denote the homogeneous Sobolev (or Dirichlet) space defined \cite{HKM06}, \cite{MZ97} as the closure of $C_{0}^{\infty}(\Omega)$ with respect to the (semi)norm 
\[
\| u \|_{\dot{W}^{1,p}_{0}(\Omega)} =  \| \nabla u \|_{L^{p}(\Omega)}.
\]
We denote by $W^{-1,p'}(\Omega) = [\dot{W}^{1,p}_{0}(\Omega)]^{*}$ the dual space, where $p' = \frac{p}{p-1}$. If $p<n$ then $W^{-1,p'}(\Omega) \subset \mathcal{D}'(\Omega)$. 

For equation \eqref{main_eq_p-lapacain} on $\R^n$, we will show that condition \eqref{condition_sigma_alpha=1}, combined with the natural assumption that  $\mu$ has finite energy, i.e. (see \cite{AH96}, Sec. 4.5), 
\begin{equation}\label{condition_mu_alpha=1}
\mu \in \dot{W}^{-1,p'}(\R^n) \Longleftrightarrow  \int_{\R^n} \W_{1,p}\mu\;d\mu < +\infty, 
\end{equation}
is necessary and sufficient for the existence of a positive finite energy solution to \eqref{main_eq_p-lapacain}. More precisely, we state our main results as follows.

\begin{Thm}\label{thm_main1}
Let $1<p<n$, $0<q<p-1$, and let $\sigma, \mu \in \mathcal{M}^{+}(\mathbb{R}^n)$.
Then there exists a positive finite energy solution $u \in L^{q}_{loc}(\R^n, d\sigma) \cap \dot{W}^{1,p}_{0}(\R^n)$ to equation 
\eqref{main_eq_p-lapacain} if and only if both \eqref{condition_sigma_alpha=1}  and \eqref{condition_mu_alpha=1} hold.
Moreover, such a solution is unique in $\dot{W}^{1,p}_{0}(\R^n)$. In the case $p \geq n$, there is only a trivial supersolution.
\end{Thm}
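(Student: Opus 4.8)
\emph{Strategy and a reformulation.} The first point is that, since $1+q<p$, Maz'ya's trace theorem (see \cite{AH96}, \cite{MZ97}) identifies \eqref{condition_sigma_alpha=1}, whose exponent is $s=\frac{(1+q)(p-1)}{p-1-q}=\frac{(1+q)(p-1)}{p-(1+q)}$, with the Sobolev trace inequality
\[
\Big(\int_{\R^n}|v|^{1+q}\,d\sigma\Big)^{\frac{1}{1+q}}\le C_\sigma\,\|\nabla v\|_{L^p(\R^n)}\qquad\text{for all }v\in\dot W^{1,p}_0(\R^n),
\]
i.e.\ with the continuous embedding $\dot W^{1,p}_0(\R^n)\hookrightarrow L^{1+q}(\R^n,d\sigma)$. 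The plan is then: (i) deduce necessity from this reformulation and from Wolff potential lower bounds; (ii) prove sufficiency by a monotone iteration whose uniform control rests on the trace inequality, the finite energy of $\mu$, and the strict inequality $q<p-1$; (iii) obtain uniqueness from a D\'iaz--Sa\'a type convexity inequality.

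\emph{Necessity.} Let $u$ be a positive finite energy solution. Testing \eqref{main_eq_p-lapacain} with $u$ gives the energy identity
\[
\int_{\R^n}|\nabla u|^p\,dx=\int_{\R^n}u^{1+q}\,d\sigma+\int_{\R^n}u\,d\mu<\infty,
\]
so in particular $\int u\,d\mu<\infty$ and $\int u^{1+q}\,d\sigma<\infty$. Since $\mu\ge0$, $u$ is a positive finite energy supersolution of the homogeneous equation $-\Delta_p v=\sigma v^q$, whence \eqref{condition_sigma_alpha=1} holds by the necessity part of \cite{CV14a}. Writing $-\Delta_p u=\nu$ for the Riesz measure of $u$, we have $\nu\ge\mu$, so by the Kilpel\"ainen--Mal\'y pointwise lower bound (see \cite{KuMi14}) $u\ge c\,\W_{1,p}\nu\ge c\,\W_{1,p}\mu$ on $\R^n$; therefore $\int\W_{1,p}\mu\,d\mu\le c^{-1}\int u\,d\mu<\infty$, which is \eqref{condition_mu_alpha=1}. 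The same lower bound handles $p\ge n$: there $\W_{1,p}\mu\equiv+\infty$ for $\mu\ne0$, so any nonnegative supersolution is identically infinite, i.e.\ only the trivial supersolution exists.

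\emph{Sufficiency and passage to the limit.} Assume \eqref{condition_sigma_alpha=1} and \eqref{condition_mu_alpha=1}. Put $u_0\equiv0$ and, inductively, let $u_{j+1}\in\dot W^{1,p}_0(\R^n)$ be the unique solution of $-\Delta_p u_{j+1}=\sigma u_j^q+\mu$; this is well posed because $\mu\in\dot W^{-1,p'}$ by \eqref{condition_mu_alpha=1}, while $u_j\in\dot W^{1,p}_0$ together with the trace inequality and H\"older gives $\sigma u_j^q\in\dot W^{-1,p'}$. The weak comparison principle and the monotonicity of $t\mapsto t^q$ yield $0=u_0\le u_1\le u_2\le\cdots$. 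For the crucial uniform bound, set $a_j=\|\nabla u_j\|_{L^p(\R^n)}$; from the energy identity for $u_{j+1}$, H\"older in $L^{1+q}(d\sigma)$, the trace inequality, and the duality estimate $\int u_{j+1}\,d\mu\le a_{j+1}\|\mu\|_{\dot W^{-1,p'}}$,
\[
a_{j+1}^p=\int u_{j+1}u_j^q\,d\sigma+\int u_{j+1}\,d\mu\le C_\sigma^{1+q}a_{j+1}a_j^{\,q}+\|\mu\|_{\dot W^{-1,p'}}a_{j+1},
\]
hence $a_{j+1}^{p-1}\le C_\sigma^{1+q}a_j^{\,q}+\|\mu\|_{\dot W^{-1,p'}}$. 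Since $q<p-1$, the increasing map $\phi(t)=\big(C_\sigma^{1+q}t^q+\|\mu\|_{\dot W^{-1,p'}}\big)^{1/(p-1)}$ satisfies $\phi(t)/t\to0$ as $t\to\infty$ and has a unique fixed point $R>0$; as $a_0=0\le R$ and $\phi$ is increasing, the recursion $a_{j+1}\le\phi(a_j)$ forces $a_j\le R$ for all $j$. Consequently $u_j$ increases to some $u\in\dot W^{1,p}_0(\R^n)$ with $\int u^{1+q}\,d\sigma\le C_\sigma^{1+q}R^{1+q}<\infty$ (so $u\in L^q_{loc}(\R^n,d\sigma)$); by dominated convergence $\sigma u_j^q\to\sigma u^q$ in $\dot W^{-1,p'}$, and since $-\Delta_p:\dot W^{1,p}_0(\R^n)\to\dot W^{-1,p'}(\R^n)$ is a homeomorphism, $u_j\to u$ in $\dot W^{1,p}_0(\R^n)$ and $-\Delta_p u=\sigma u^q+\mu$. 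Finally $u\ge u_1>0$ since $\mu\ne0$.

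\emph{Uniqueness, and the main obstacle.} If $u_1,u_2\in\dot W^{1,p}_0(\R^n)$ are positive finite energy solutions, the D\'iaz--Sa\'a inequality (justified for finite energy solutions after the usual truncation) applied with $-\Delta_p u_i=\sigma u_i^q\,dx+\mu$ gives
\[
0\le\int_{\R^n}(u_1^p-u_2^p)\big(u_1^{q-p+1}-u_2^{q-p+1}\big)\,d\sigma+\int_{\R^n}(u_1^p-u_2^p)\big(u_1^{1-p}-u_2^{1-p}\big)\,d\mu.
\]
Because $t\mapsto t^p$ is increasing while $t\mapsto t^{q-p+1}$ and $t\mapsto t^{1-p}$ are decreasing (here $q-p+1<0$), both integrands are $\le0$ pointwise, so each vanishes; hence $u_1=u_2$ $\sigma$-a.e.\ and $\mu$-a.e., whence $-\Delta_p u_1=-\Delta_p u_2$ in $\dot W^{-1,p'}$ and $u_1=u_2$ in $\dot W^{1,p}_0(\R^n)$. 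I expect the main obstacle to be the uniform bound on the $a_j$: one must quantify the interaction between $\mu$ and $\sigma$ and show that the feedback of the potential of $\mu$ through the sublinear term $\sigma u^q$ does not destroy finite energy. This is exactly where \eqref{condition_sigma_alpha=1} (as a trace inequality) and \eqref{condition_mu_alpha=1} (finiteness of $\|\mu\|_{\dot W^{-1,p'}}$) enter together, with the sub-naturality $q<p-1$ rendering the recursion for $a_j$ contractive in the large.
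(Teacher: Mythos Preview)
Your proposal is correct, and the sufficiency argument is more direct than the paper's. The paper proceeds in two stages: it first constructs a solution $v\in L^{1+q}(\R^n,d\sigma)$ to the \emph{integral} equation $v=\W_{1,p}(v^q\,d\sigma)+\W_{1,p}\mu$ (its Theorem~3.1), which requires the auxiliary two-weight condition $\W_{1,p}\mu\in L^{1+q}(\R^n,d\sigma)$, and devotes a separate lemma (its Lemma~3.3) to deriving this from \eqref{condition_sigma_alpha=1} and \eqref{condition_mu_alpha=1}. Only then does it iterate the PDE, using $v$ as a pointwise barrier for the iterates via the Kilpel\"ainen--Mal\'y estimates, and passes to the limit with Trudinger--Wang weak continuity. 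Your one-stage PDE iteration, with the recursion $a_{j+1}^{p-1}\le C_\sigma^{1+q}a_j^{\,q}+\|\mu\|_{\dot W^{-1,p'}}$, extracts the uniform energy bound directly from the sub-naturality $q<p-1$ and never invokes the two-weight condition, the Wolff-potential barrier, or weak continuity; the trace inequality and the continuity of $(-\Delta_p)^{-1}\colon\dot W^{-1,p'}\to\dot W^{1,p}_0$ do all the work. This is a genuine simplification. Necessity is essentially the same in both arguments; your route to \eqref{condition_mu_alpha=1} via the pointwise lower bound and Wolff's inequality is a slight detour compared to the paper's one-line observation that $u\in\dot W^{1,p}_0(\R^n)$ forces $-\Delta_p u\in\dot W^{-1,p'}(\R^n)$, whence $\mu\le-\Delta_p u$ gives $\mu\in\dot W^{-1,p'}(\R^n)$.

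For uniqueness, however, the paper does not apply a D\'iaz--Sa\'a-type identity directly to two arbitrary solutions. It first observes that the iterated solution is \emph{minimal} (each iterate lies below any given finite-energy solution by weak comparison), and then runs the convexity argument along $\lambda_t=[(1-t)w^p+tu^p]^{1/p}$ with the ordering $u\ge w$ in hand; this ordering is what makes $\lambda_t-\lambda_0\ge 0$ and allows Fatou's lemma in the limit $t\to 0$. Your direct D\'iaz--Sa\'a step is the same convexity in disguise, but without an a~priori ordering the test functions $u_1-u_2^p/u_1^{p-1}$ and $u_2-u_1^p/u_2^{p-1}$ are not obviously in $\dot W^{1,p}_0(\R^n)$ (no bound on $u_1/u_2$ is available), and the ``usual truncation'' passage to the limit is not routine for measure data on all of $\R^n$. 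The easy fix is to note that your iteration from $u_0\equiv 0$ already produces the minimal finite-energy solution, and then argue as the paper does.
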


In our proof of Theorem \ref{thm_main1}, we show that if \eqref{condition_sigma_alpha=1} holds, then \eqref{condition_mu_alpha=1} implies
a crucial two-weight condition 
\begin{equation}\label{condition_mu_1+q}
{\W}_{1, p}\mu \in L^{1+q}(\R^n, d\sigma),
\end{equation}
which turns out to be necessary for the existence of a positive  solution $u \in L^{q}_{loc}(\R^n, d\sigma) \cap \dot{W}^{1,p}_{0}(\R^n)$ to \eqref{main_eq_p-lapacain}. 

Given \eqref{condition_sigma_alpha=1}, it allows us to deduce the existence of a positive finite energy solution $u$  to equation \eqref{main_eq_p-lapacain} under  assumption \eqref{condition_mu_alpha=1}, by using a positive solution $\tilde u \in L^{1+q}(\R^n, d\sigma)$ to the corresponding nonlinear integral equation
\begin{equation}\label{int_eq_alpha=1}
\tilde u = \W_{1,p}(\tilde u^q d\sigma) + \W_{1,p}\mu \quad d\sigma\text{-}a.e.
\end{equation}
Such a solution $\tilde u$  can be constructed  by an iterative method, provided \eqref{condition_mu_1+q} holds.

As shown in \cite{COV00}, condition \eqref{condition_sigma_alpha=1} is equivalent to the trace inequality 
\begin{equation}\label{trace_inq}
\| \varphi \|_{L^{1+q}(\R^n, d\sigma)} \leq C \| \nabla\varphi \|_{L^{p}(\R^n)}, \quad \forall \varphi \in C_{0}^{\infty}(\R^n),
\end{equation}
where $C$ is a positive constant independent of $\varphi$. 

Moreover, there is an alternative charaterization of \eqref{trace_inq} in terms of capacities due to Maz'ya and Netrusov (see \cite[Sec. 11.6]{Maz11}),
\begin{equation}\label{condition_cap}
\int_{0}^{\sigma(\R^n)} \left[ \frac{r}{\varkappa(\sigma, r)} \right]^{\frac{1+q}{p-1-q}}\;dr < +\infty,
\end{equation}
where $\varkappa(\sigma, r) = \inf \{ \text{cap}_{p}(E)\colon \,  \sigma(E) \geq r, 
E \subset \R^n \, \text{compact}\}$ and $\text{cap}_{p}(\cdot)$ is the $p$-capacity defined,
for  a compact set  $E \subset \R^n$, by 
\[
\text{cap}_{p}(E) =  \inf \big\lbrace \Vert  \nabla u \Vert^{p}_{L^{p}(\R^n)}\colon u \geq 1 \;\; \text{on} \;\; E, \, \, u \in C_{0}^{\infty}(\R^n) \big\rbrace.
\]

Thus, any one of conditions \eqref{condition_sigma_alpha=1},  \eqref{trace_inq},  or \eqref{condition_cap}, combined  with  \eqref{condition_mu_alpha=1}, is necessary and sufficient for the existence of a positive finite energy solution to equation \eqref{main_eq_p-lapacain}. The uniqueness part will be proven by first establishing the minimality of such a solution, and then using convexity of the Dirichlet integrals $\int_{\R^n} |\nabla u |^{p} \;dx $.

Furthermore, we are able to adjust our argument outlined above to obtain analogous results for the fractional Laplace equation \eqref{main_eq_frac_laplacian} as follows.

\begin{Thm}\label{thm_main2}
Let $0<q<1$, $0< \alpha < \frac{n}{2}$, and let $\sigma, \mu \in \mathcal{M}^{+}(\mathbb{R}^n)$. Then there exists a positive finite energy solution 
$u \in L^{q}_{loc}(\R^n, d\sigma) \cap \dot{H}^{\alpha}(\R^n) $ to equation \eqref{main_eq_frac_laplacian} if and only if the following two conditions hold:
\begin{equation}\label{condition_sigma_alpha_2}
\I_{2\alpha}\sigma \in L^{\frac{1+q}{1-q}}(\R^n, d\sigma),
\end{equation}
and
\begin{equation}\label{condition_mu_alpha_2}
\mu \in \dot{H}^{-\alpha}(\R^n).
\end{equation}
Moreover, if $0< \alpha \leq 1$, then such a solution is unique in $\dot{H}^{\alpha}(\R^n)$.
\end{Thm}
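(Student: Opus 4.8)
The plan is to parallel the proof of Theorem \ref{thm_main1}, with the Wolff potential $\W_{1,p}$ replaced by the Riesz potential $\I_{2\alpha}$, which inverts $(-\Delta)^\alpha$ up to a dimensional constant. Since $\W_{\alpha,2}\sigma \approx \I_{2\alpha}\sigma$ pointwise, the results of \cite{COV00} and \cite{Maz11} give that \eqref{condition_sigma_alpha_2} is equivalent to the trace inequality
\[
\| \varphi \|_{L^{1+q}(\R^n,\, d\sigma)} \le C\, \| (-\Delta)^{\alpha/2} \varphi \|_{L^2(\R^n)}, \qquad \varphi \in C_0^\infty(\R^n),
\]
and to the associated capacitary condition. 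The first step is to recast \eqref{main_eq_frac_laplacian} as the integral equation $\tilde u = \I_{2\alpha}(\tilde u^q\, d\sigma) + \I_{2\alpha}\mu$ holding $d\sigma$-a.e., and to verify that a positive finite energy distributional solution $u$ of \eqref{main_eq_frac_laplacian} corresponds to a positive solution $\tilde u \in L^{1+q}(\R^n, d\sigma)$ of this equation, with $u = \I_{2\alpha}(\tilde u^q\, d\sigma) + \I_{2\alpha}\mu$ recovered everywhere on $\R^n$.

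\emph{Necessity.} Suppose a positive finite energy solution $u$ exists. Since $\sigma u^q \ge 0$, we have $\mu \le (-\Delta)^\alpha u$ in $\mathcal{D}'(\R^n)$; testing against $0 \le \varphi \in C_0^\infty(\R^n)$ and using $u \in \dot{H}^\alpha(\R^n)$ gives $\int_{\R^n} \varphi\, d\mu \le \| u \|_{\dot{H}^\alpha} \| \varphi \|_{\dot{H}^\alpha}$, whence \eqref{condition_mu_alpha_2}. Likewise $u \ge \I_{2\alpha}(u^q\, d\sigma)$ together with finite energy of $u$ yields \eqref{condition_sigma_alpha_2}, as in the homogeneous case of \cite{CV14a} with $p = 2$. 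The crucial additional point — the analogue of the implication \eqref{condition_mu_alpha=1} $\Rightarrow$ \eqref{condition_mu_1+q} — is that, given \eqref{condition_sigma_alpha_2}, the finite energy condition \eqref{condition_mu_alpha_2} (equivalently $\int_{\R^n} \I_{2\alpha}\mu\, d\mu < \infty$) forces the two-weight bound $\I_{2\alpha}\mu \in L^{1+q}(\R^n, d\sigma)$; this follows by applying the trace inequality to mollifications and truncations of $\I_{2\alpha}\mu$ and using $\| \I_{2\alpha}\mu \|_{\dot{H}^\alpha}^2 = \int_{\R^n} \I_{2\alpha}\mu\, d\mu$.

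\emph{Sufficiency.} Assuming \eqref{condition_sigma_alpha_2} and \eqref{condition_mu_alpha_2}, I would iterate $u_0 = \I_{2\alpha}\mu$ and $u_{j+1} = \I_{2\alpha}(u_j^q\, d\sigma) + \I_{2\alpha}\mu$. The sequence $(u_j)$ is nondecreasing, and the trace inequality combined with $0 < q < 1$, Hölder's inequality, and the bound $\I_{2\alpha}\mu \in L^{1+q}(d\sigma)$ gives a uniform estimate $\| u_j \|_{L^{1+q}(\R^n, d\sigma)} \le M < \infty$. Monotone convergence then produces $\tilde u \in L^{1+q}(\R^n, d\sigma)$ solving the integral equation $d\sigma$-a.e., and $u := \I_{2\alpha}(\tilde u^q\, d\sigma) + \I_{2\alpha}\mu$ solves \eqref{main_eq_frac_laplacian} everywhere; moreover $u \in \dot{H}^\alpha(\R^n)$ because $\mu \in \dot{H}^{-\alpha}(\R^n)$ by hypothesis while $\tilde u^q\, d\sigma \in \dot{H}^{-\alpha}(\R^n)$, the latter since $\tilde u^q \in L^{(1+q)/q}(d\sigma)$ and the dual form of the trace inequality bounds $\| f\, d\sigma \|_{\dot{H}^{-\alpha}}$ by $\| f \|_{L^{(1+q)/q}(\R^n, d\sigma)}$ (the exponent $(1+q)/q$ being conjugate to $1+q$).

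\emph{Uniqueness for $0 < \alpha \le 1$, and the main obstacle.} As in Theorem \ref{thm_main1}, one first shows the constructed solution $\underline u$ is minimal among positive supersolutions: any positive supersolution $v$ satisfies $v \ge \I_{2\alpha}(v^q\, d\sigma) + \I_{2\alpha}\mu \ge u_0$, hence $v \ge u_j$ for all $j$ by induction, so $v \ge \underline u$. Uniqueness among positive finite energy solutions then follows from convexity of the Dirichlet integral $\int_{\R^n} |(-\Delta)^{\alpha/2} w|^2\, dx$: for a finite energy solution $u$, which dominates $\underline u$, a Picone/Díaz–Saa–type inequality for $(-\Delta)^\alpha$ — available precisely for $0 < \alpha \le 1$ — applied to $u$ and $\underline u$, combined with the weak formulation of \eqref{main_eq_frac_laplacian} and the sublinearity $0 < q < 1$, forces $u = \underline u$. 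I expect the principal difficulties to be: (i) the necessity of the two-weight condition $\I_{2\alpha}\mu \in L^{1+q}(d\sigma)$ and the exact equivalence between finite energy distributional solutions of the PDE and $L^{1+q}(d\sigma)$ solutions of the integral equation, which requires care about the admissible test-function class given only $u \in L^q_{loc}(d\sigma) \cap \dot{H}^\alpha$; and (ii) the uniqueness step, where one must justify the inductive comparison for an \emph{arbitrary} supersolution and verify that the relevant ratios are legitimate test functions — the restriction $\alpha \le 1$ entering exactly because the required bilinear inequality for $(-\Delta)^\alpha$ can fail for $\alpha > 1$.
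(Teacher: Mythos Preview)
Your proposal is correct and follows essentially the same route as the paper: necessity via Lemma~\ref{lemma_converse_2} (testing to get $u\in L^{1+q}(d\sigma)$, then the lower bound $u\ge c(\I_{2\alpha}\sigma)^{1/(1-q)}$ for \eqref{condition_sigma_alpha_2}, and $\I_\alpha\mu\in L^2$ directly for \eqref{condition_mu_alpha_2}); sufficiency via Lemma~\ref{relation_of_conditions} $+$ the iteration of Theorem~\ref{thm_existence} with $p=2$, then the dual trace inequality to place $w$ in $\dot H^\alpha$; minimality by induction on the iterates; and uniqueness for $0<\alpha\le 1$ via the convexity of the Dirichlet integral from \cite{BF14}. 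One small simplification the paper makes over your mollification argument for the implication \eqref{condition_sigma_alpha_2}\,\&\,\eqref{condition_mu_alpha_2}\,$\Rightarrow$\,\eqref{condition_f_alpha_2}: since the trace inequality reads $\|\I_\alpha g\|_{L^{1+q}(d\sigma)}\le c\|g\|_{L^2}$ for all $g\in L^2$, just substitute $g=\I_\alpha\mu\in L^2$ and use the semigroup property $\I_\alpha(\I_\alpha\mu)=\I_{2\alpha}\mu$.
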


Here, for $0<\alpha<\frac{n}{2}$ and $\sigma \in \mathcal{M}^{+}(\R^n)$, we denote by $\I_{2\alpha}\sigma = \W_{\alpha, 2}\sigma$  the Riesz   potential of order $2\alpha$ 
(up to a normalization constant). The  homogeneous Sobolev space $\dot{H}^{\alpha}(\R^n)$ ($0<\alpha<\frac{n}{2}$) can be  defined by means of Riesz potentials, 
\[
\dot{H}^{\alpha}(\R^n) =  \big\lbrace u\colon \,  u = \I_{\alpha}f,\; f \in L^{2}(\R^n) \big\rbrace,
\]
equipped with  norm 
\[
\| u \|_{\dot{H}^{\alpha}(\R^n)}= \| f \|_{L^{2}(\R^n)}.
\]
We denote by $\dot{H}^{-\alpha}(\R^n) = [\dot{H}^{\alpha}(\R^n)]^{*}$ the space 
of distributions dual to  $\dot{H}^{\alpha}(\R^n)$. 

Adapting the previous argument, if  \eqref{condition_sigma_alpha_2}  holds, we first construct a positive solution $\tilde u \in L^{1+q}(\R^n, d\sigma)$ to the integral equation
\begin{equation}\label{int_eq_frac00}
\tilde u = \I_{2\alpha}(\tilde u^{q}d\sigma) + \I_{2\alpha}\mu \quad d\sigma\text{-}a.e.
\end{equation}
using an iterative procedure, under the additional assumption that 
\begin{equation}\label{condition_f_alpha_2}
\I_{2\alpha}\mu \in L^{1+q}(\R^n, d\sigma).
\end{equation}

Using the nontrivial fact \eqref{condition_sigma_alpha_2}\&\eqref{condition_mu_alpha_2}$\Longrightarrow$\eqref{condition_f_alpha_2}, we deduce  the existence of a solution $\tilde u\in L^{1+q}(\R^n, d\sigma)$, 
and then a positive finite energy solution $u$ to equation \eqref{main_eq_frac_laplacian}.

We observe that \eqref{condition_sigma_alpha_2} is equivalent to the trace inequality \cite{COV06}
\begin{equation}\label{trace_inq2}
\big\| \I_{\alpha}g \big\|_{L^{1+q}(\R^n, d\sigma)} \leq C \| g \|_{L^{2}(\R^n)}, \quad \forall g \in L^{2}(\R^n),
\end{equation}
where $C$ is a positive constant independent of $g$. Thus,  condition  \eqref{condition_sigma_alpha_2}, or equivalently \eqref{trace_inq2}, together with condition \eqref{condition_mu_alpha_2} is necessary and sufficient for the existence of a positive finite energy solution to equation \eqref{main_eq_frac_laplacian}. The restriction on the value of $\alpha$ in  the uniqueness result is due to availability \cite{BF14} of a certain convexity property of the Dirichlet integrals 
$
\int_{\R^n}|(-\Delta)^{\frac{\alpha}{2}} u|^{2}\;dx
$
in the case $\alpha\in(0, 1]$. 

We now consider sublinear elliptic equation \eqref{main_eq_frac_laplacian_domain} on arbitrary  domains $\Omega \subset \R^n$ (possibly unbounded) with positive Green's function $G(x,y)$ on $\Omega \times \Omega$. Define the Green potential by 
\[
\g\sigma(x) = \int_{\Omega} G(x,y) \, d\sigma(y), \quad x \in \Omega.
\]
Our main results in this setup are stated in the following theorem.

\begin{Thm}\label{thm_main3}
Let $0<q<1$ and $\sigma, \mu \in \mathcal{M}^{+}(\Omega)$, and let $G$ be Green's function associated with $-\Delta$ on $\Omega$. Then there exists a positive finite energy solution $u \in L^{q}_{loc}(\Omega, d\sigma) \cap \dot{W}^{1,2}_{0}(\Omega)$ to equation \eqref{main_eq_frac_laplacian_domain} if and only if the following two conditions hold: 
\begin{equation}\label{condition1_domain} 
\g\sigma \in L^{\frac{1+q}{1-q}}(\Omega, d\sigma), 
 \end{equation}
 and 
\begin{equation}\label{condition3_domain}
\mu \in \dot{W}^{-1,2}(\Omega).
\end{equation} 
Moreover, such a solution is unique in $\dot{W}^{1,2}_{0}(\Omega)$.
\end{Thm}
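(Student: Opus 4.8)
\textbf{Proof proposal for Theorem \ref{thm_main3}.}

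The plan is to mirror the scheme developed for Theorems \ref{thm_main1} and \ref{thm_main2}, exploiting the fact that the Green potential $\g$ associated with $-\Delta$ on $\Omega$ plays the role of the Wolff potential $\W_{1,p}$ (when $p=2$) or the Riesz potential $\I_{2}$, and that condition \eqref{condition1_domain} is the precise analogue of \eqref{condition_sigma_alpha=1} and \eqref{condition_sigma_alpha_2}. First I would record the dictionary: by the standard theory of Green potentials (see \cite{AH96}, \cite{Maz11}), $\mu\in\dot W^{-1,2}(\Omega)$ if and only if $\int_\Omega \g\mu\,d\mu<+\infty$, and condition \eqref{condition1_domain} is equivalent to the trace inequality $\|\varphi\|_{L^{1+q}(\Omega,d\sigma)}\le C\|\nabla\varphi\|_{L^2(\Omega)}$ for $\varphi\in C_0^\infty(\Omega)$. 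The energy pairing $\langle\g\nu_1,\nu_2\rangle=\int_\Omega \g\nu_1\,d\nu_2$ is symmetric, and $\g$ obeys the weak maximum principle; these are the only structural facts needed.

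Next, for the sufficiency direction, assuming \eqref{condition1_domain} and \eqref{condition3_domain}, I would first prove the implication \eqref{condition1_domain}\,\&\,\eqref{condition3_domain}$\Longrightarrow$ $\g\mu\in L^{1+q}(\Omega,d\sigma)$, the analogue of the key two-weight step emphasized after Theorem \ref{thm_main1}: this follows by writing $\|\g\mu\|_{L^{1+q}(\Omega,d\sigma)}$ via the trace inequality applied to a suitable regularization of $\g\mu$, controlling $\|\nabla\g\mu\|_{L^2(\Omega)}^2=\int_\Omega\g\mu\,d\mu<+\infty$. Then I would solve the integral equation $\tilde u=\g(\tilde u^q d\sigma)+\g\mu$ $d\sigma$-a.e. by the iteration $u_0=0$ (or $u_0=c\,\g\mu$ for small $c$), $u_{j+1}=\g(u_j^q d\sigma)+\g\mu$; monotonicity of the iterates together with the a priori bound coming from \eqref{condition1_domain} (exactly as in the homogeneous case of \cite{CV14a}, using $0<q<1$ and Young's/H\"older's inequality against the trace inequality) yields convergence in $L^{1+q}(\Omega,d\sigma)$ to a solution $\tilde u$. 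Setting $u=\g(\tilde u^q d\sigma)+\g\mu$ gives a function with $\int_\Omega|\nabla u|^2\,dx=\int_\Omega u\,d(\tilde u^q d\sigma+\mu)<+\infty$ (finiteness again via the trace inequality and \eqref{condition1_domain}, \eqref{condition3_domain}), and standard potential-theoretic arguments show $u=\tilde u$ $d\sigma$-a.e. and that $u$ is a finite energy weak solution of \eqref{main_eq_frac_laplacian_domain}.

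For necessity, given a positive finite energy solution $u$, testing the equation and using $-\Delta u\ge\mu$ together with $u\ge\g\mu$ (weak comparison) forces $\mu\in\dot W^{-1,2}(\Omega)$, i.e. \eqref{condition3_domain}; and $u\ge\g(u^q d\sigma)$ combined with the finite-energy bound $\int_\Omega u\,d(u^q d\sigma)\le\int_\Omega|\nabla u|^2\,dx<\infty$ gives, after the standard bootstrap that lower bounds $u$ pointwise $d\sigma$-a.e. in terms of $\g\sigma$ (using $0<q<1$ to iterate $u\ge\g(u^q d\sigma)$), the integrability \eqref{condition1_domain}. Finally, uniqueness: I would show any finite energy solution is the minimal one — if $u,v$ are two such solutions then $\min(u,v)$ is a supersolution dominating $\g\mu$, and the iteration started from $\g\mu$ converges to a solution below both, which by a comparison argument equals each — and then invoke strict convexity of the Dirichlet integral $\int_\Omega|\nabla u|^2\,dx$ on $\dot W^{1,2}_0(\Omega)$ (the $p=2$, $\alpha=1$ case of the convexity used in Theorems \ref{thm_main1}–\ref{thm_main2}) to conclude $u=v$. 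The main obstacle I anticipate is the necessity of \eqref{condition1_domain}: making rigorous the pointwise lower bound for $u$ in terms of $\g\sigma$ on general (possibly unbounded) domains requires care with the boundary behavior of $G$ and with the local integrability $u\in L^q_{loc}(\Omega,d\sigma)$, though this is exactly where the weak maximum principle for $\g$ and the iteration $u\ge\g(u^q d\sigma)$ do the work, paralleling \cite{CV14a} and \cite{QV16}.
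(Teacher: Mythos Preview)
Your proposal is correct and follows essentially the same route as the paper: the key two-weight implication \eqref{condition1_domain}\,\&\,\eqref{condition3_domain}$\Rightarrow\g\mu\in L^{1+q}(\Omega,d\sigma)$ (Lemma~\ref{relation_of_conditions_domain}), the monotone iteration for the integral equation \eqref{int_eq_domain00} (Theorem~\ref{thm_existence_domain}), the finite-energy verification (Lemma~\ref{thm_converse_1_domain}), necessity by showing $u\in L^{1+q}(\Omega,d\sigma)$ via Brezis--Browder (Lemma~\ref{lemma_converse_2_domain}), and uniqueness via minimality plus convexity of the Dirichlet integral (Theorem~\ref{uniqueness_domain}). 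The one place where the paper deviates from your outline is precisely the obstacle you flag at the end: rather than establishing a pointwise lower bound $u\ge c(\g\sigma)^{1/(1-q)}$ on a general domain by iterating $u\ge\g(u^qd\sigma)$, the paper invokes Theorem~\ref{thm_I}(ii) (from \cite{V17a}), which asserts directly---assuming only quasi-symmetry and the weak maximum principle for $G$---that the existence of any supersolution $u\in L^{1+q}(\Omega,d\sigma)$ to $u\ge\g(u^qd\sigma)$ is equivalent to \eqref{condition1_domain}; this black-box characterization sidesteps the boundary behavior of $G$ that you were worried about and makes the necessity step immediate once $u\in L^{1+q}(\Omega,d\sigma)$ is in hand.
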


Recently, it has been shown in \cite{V17a} that \eqref{condition1_domain} is equivalent to the weighted norm inequality for Green's potentials, 
\begin{equation}\label{weighted_norm_ineq_special0}
\big\| \g(f d\sigma) \big\|_{L^{1+q}(\Omega, d\sigma)} \leq C \| f \|_{L^{\frac{1+q}{q}}(\Omega, d\sigma)}, \quad \forall f \in L^{\frac{1+q}{q}}(\Omega, d\sigma)
\end{equation}
where $C$ is positive constant independent of $f$. Therefore, condition \eqref{condition1_domain}, or equivalently  \eqref{weighted_norm_ineq_special0}, together with condition \eqref{condition3_domain} turns out to be  necessary and sufficient for the existence of a positive finite energy solution to equation \eqref{main_eq_frac_laplacian_domain}.

Our argument is based on the results in \cite{V17a} mentioned above, along with a new element that given \eqref{condition1_domain}, condition \eqref{condition3_domain} yields 
\begin{equation}\label{condition2_domain}
\g \mu \in L^{1+q}(\Omega, d\sigma).
 \end{equation}
As before, when \eqref{condition1_domain} holds, this allows us to construct a positive finite energy solution to equation \eqref{main_eq_frac_laplacian_domain} by using an auxiliary solution $\tilde u \in L^{1+q}(\Omega, d\sigma)$ to the corresponding integral equation
\begin{equation}\label{int_eq_domain00}
\tilde u = \g (\tilde u^q d\sigma) + \g \mu \quad d\sigma\text{-}a.e.
\end{equation}

Analogues of Theorems \ref{thm_main2} and \ref{thm_main3} for the fractional Laplacian $\left(-\Delta \right)^{\alpha}$ on domains $\Omega$ with Green's function $G$ in the case $0<\alpha<1$ (see [3]) will be considered elsewhere. There are also some analogous results (less precise at the boundary $\partial \Omega$)  for equation \eqref{main_eq_p-lapacain} involving the $p$-Laplace operator
 in domains $\Omega\subset \R^n$;  see Remark \ref{phuc-v} below.

This paper is organized as follows. In Sec. 2, we recall the necessary mathematical background, together with preliminary results concerning
quasilinear equations and nonlinear potentials. In Sections 3, 4 and 5, we establish explicit necessary and sufficient conditions for the existence of positive finite energy solutions to equations \eqref{main_eq_p-lapacain}, \eqref{main_eq_frac_laplacian}, and \eqref{main_eq_frac_laplacian_domain}, respectively. Uniqueness results for such solutions are discussed in Sec. 6.

Throughout, the letters $c$ and $C$ denote various positive constants whose value may change from one place to another. 
\section{Preliminaries}
Let $\Omega \subseteq \R^n$ be a domain (nonempty open connected set). We denote
by $\mathcal{M}^{+}(\Omega)$ the set of all nontrivial nonnegative locally finite Borel measures in $\Omega$, and by $C^{\infty}_{0}(\Omega)$ the set of all smooth compactly supported functions in $\Omega$. 

For $1 \leq p<\infty$ and $\sigma\in  \mathcal{M}^{+}(\Omega)$,
we denote by $L^{p}(\Omega, d\sigma)$ the space of all real-valued measurable 
 functions $u$ on $\Omega$ such that 
\[
\Vert u \Vert_{L^{p}(\Omega, d\sigma)} = \left( \int_{\Omega}|u|^{p}\;d\sigma \right)^{\frac{1}{p}}<\infty.
\]
The corresponding local space $L_{loc}^{p}(\Omega, d\sigma)$ consists of real-valued measurable functions $u$ on $\Omega$ such that the restriction $u_{|K} \in L^{p}(K, d\sigma)$ for every compact subset $K \subset \Omega$. When $\sigma$ is ($n$-dimensional) Lebesgue measure, $d\sigma = dx$, we write $L^{p}(\Omega)$ and $L_{loc}^{p}(\Omega)$, respectively. 

For $1 \leq p < \infty$, the Sobolev space $W^{1, p}(\Omega)$ consists of all functions $u \in L^p(\Omega)$ such that $|\nabla u | \in L^p(\Omega)$, where $\nabla u $ is the vector of distributional (or weak) partial derivatives of $u$ of order 1. The norm on $W^{1, p}(\Omega)$ is given by 
\[
\| u \|_{W^{1, p}(\Omega)} = \| u \|_{L^{p}(\Omega)} + \| \nabla u \|_{L^{p}(\Omega)}.
\]
The corresponding local space denoted by  $W_{loc}^{1, p}(\Omega)$ is  the space of all functions $u$ in $\Omega$ such that the restriction $u_{| F} \in W^{1, p}(F)$ for every relatively compact open subset $F \subset \Omega$. 

The Sobolev space $W_{0}^{1, p}(\Omega)$ is defined as the closure of $C_{0}^{\infty}(\Omega)$ in $W^{1, p}(\Omega)$. It is easy to see that $W_{0}^{1, p}(\R^n) = W^{1, p}(\R^n)$. The homogeneous version of  $W_{0}^{1, p}(\Omega)$, called the homogeneous Sobolev space (or Dirichlet space), denoted by $\dot{W}^{1,p}_{0}(\Omega)$, is defined as the closure of $C_{0}^{\infty}(\Omega)$ with respect to the seminorm 
\[
\| u \|_{\dot{W}^{1,p}_{0}(\Omega)} = \| \nabla u \|_{L^{p}(\Omega)}.
\]
That is, $\dot{W}^{1,p}_{0}(\Omega)$ is the set of all functions $u \in W^{1,p}_{loc}(\Omega)$ such that $|\nabla u| \in L^{p}(\Omega)$
for which there exists a sequence $\{ \varphi_{j}\}_{1}^{\infty} \subset C_{0}^{\infty}(\Omega)$ such that $\| \nabla u - \nabla \varphi_{j} \|_{L^{p}(\Omega)} \rightarrow 0 $ as $j \rightarrow \infty$. When $1 < p < n$, the dual space to $\dot{W}_{0}^{1,p}(\Omega)$ denoted by $W^{-1,p'}(\Omega)$, is the space of distributions 
$\mu\in \mathcal{D}'(\Omega)$ such that
\[
\| \mu \|_{W^{-1,p'}(\Omega)} = \sup \frac{|\langle \mu, u \rangle|}{ \| u \|_{\dot{W}_{0}^{1,p}(\Omega)}} < +\infty,
\]
where the supremum is taken over all nontrivial functions $u \in C^\infty_{0}(\Omega)$. Here, $p' = \frac{p}{p-1}$ is the H\"{o}lder conjugate of $p$. For a measure $\mu \in \mathcal{M}^{+}(\Omega)$, $\mu \in \dot{W}^{-1,p'}(\Omega)$ if and only if there exists a positive constant $C$ such that 
\[
\Big| \int_{\Omega} \varphi \; d\mu \Big| \leq C \left( \int_{\Omega} |\nabla \varphi |^{p} \;dx \right)^{\frac{1}{p}}, 
\quad  \forall \varphi \in C_{0}^{\infty}(\Omega).
\]

For $0 < \alpha < n $, the Riesz potential $\I_{\alpha}f$ of a function $f \in L^{1}_{loc}(\R^n)$ is defined by 
\[
\I_{\alpha}f(x) = (-\Delta)^{-\frac{\alpha}{2}}f(x)= \gamma(\alpha, n)\int_{\R^n} \frac{f(y)}{|x-y|^{n-\alpha}}\;dy, \quad x \in \R^n,
\]
where $\gamma(\alpha, n) = \frac{\Gamma \left( \frac{n-\alpha}{2}\right)}{\pi^{\frac{n}{2}}2^{\alpha}\Gamma \left( \frac{\alpha}{2}\right)}$ is a normalization constant. 

Observe that,  for $f \in L^{p}(\R^n)$, $1< p < \frac{n}{\alpha}$, the Riesz potential $\I_{\alpha}f$ is well-defined and finite $(\alpha, p)$-quasi everywhere (briefly, q.e.), meaning everywhere except for a set of $(\alpha, p)$-capacity zero (see \cite{AH96}). Moreover, $\I_{\alpha}f$ is $(\alpha, p)$-quasicontinuous (in brief, quasicontinuous) which means that, for every $\epsilon > 0$, there is an open set $G \subset \R^n$ such that $\text{cap}_{\alpha, p}(G) < \epsilon$ and the restriction $\I_{\alpha}f |_{G^c}$ is continuous on $G^{c}$. Here the $(\alpha, p)$-capacity of $E\subset \R^n$  is defined by
\[
\text{cap}_{\alpha, p}(E)\!\colon=  \, \inf \big\lbrace \Vert u \Vert^{p}_{L^{p}(\R^n)} : \I_{\alpha} u \geq 1\;\text{on}\; E, \, u \geq 0\; \text{a.e.}, u \in L^{p}(\R^n) \big\rbrace.
\]
Note that Lebesgue measure is
absolutely continuous with respect to the $(\alpha, p)$-capacity, i.e., each set of $(\alpha, p)$-capacity zero has Lebesgue measure zero.

In a similar manner, the Riesz potential $\I_{\alpha}\sigma$ of order $\alpha\in (0, n)$ of a measure $\sigma \in \mathcal{M}^{+}(\R^n)$ is defined  by 
\[
\I_{\alpha}\sigma(x) =(-\Delta)^{-\frac{\alpha}{2}}\sigma(x) = (n-\alpha) \gamma(\alpha, n) \int_{0}^{\infty} \frac{\sigma \left( B(x,r) \right)}{r^{n-\alpha}} \;\frac{dr}{r}, \quad x \in \R^n.
\]
Henceforth, the normalization constant will be dropped for the sake of convenience.

For $1<p<\infty $ and $0 < \alpha < \frac{n}{p}$,  the fractional homogeneous Sobolev space is defined by (see \cite{St70})
\[
\dot{L}^{\alpha, p}(\R^n) =  \big\lbrace u\colon \,  u = \I_{\alpha}f,\; f \in L^{p}(\R^n) \big\rbrace,
\]
equipped with  norm 
\[
\| u \|_{\dot{L}^{\alpha, p}(\R^n)}= \| f \|_{L^{p}(\R^n)}.
\] 
Clearly, $\dot{W}_{0}^{1, p}(\R^n)= \dot{L}^{1, p}(\R^n)$. 
In the case $p=2$, we use the notation $\dot{L}^{\alpha,2}(\R^n) = \dot{H}^{\alpha}(\R^n)$. It is well-known that when $0< \alpha < 1 $, $\| u \|_{\dot{H}^{\alpha}(\R^n)}$ is equivalent to the Gagliardo seminorm 
\[
\left( \int_{\R^n}\int_{\R^n} \frac{\vert u (x) - u (y)\vert^{2} }{\vert x-y \vert^{n+2\alpha}}\;dxdy \right)^{\frac{1}{2}},
\]
see, for example, \cite[Sec. 3.5]{AH96}. 

The dual space of $\dot{H}^{\alpha}(\R^n)$ 
for $0<\alpha<\frac{n}{2}$, 
denoted by $\dot{H}^{-\alpha}(\R^n)$, consists of distributions $\mu \in \mathcal{D}'(\R^n)$ such that 
\[
\| \mu \|_{\dot{H}^{-\alpha}(\R^n)} = \sup \frac{| \langle \mu, u \rangle |}{\| u \|_{\dot{H}^{\alpha}(\R^n)}} < + \infty,
\]
where the supremum is taken over all nontrivial functions $u \in C^{\infty}_0(\R^n)$. Thus, by duality, for a measure $\mu \in \mathcal{M}^{+}(\R^n)$, we have $\mu \in \dot{H}^{-\alpha}(\R^n)$ if and only if  $ \big\| \I_{\alpha}\mu \big\|_{L^{2}(\R^n)} < +\infty $, or equivalently 
$\int_{\R^n}  \I_{2\alpha}\mu \, d \mu<\infty$.

Let $1<p<\infty$, $0< \alpha < \frac{n}{p}$ and $\sigma \in \mathcal{M}^{+}(\R^n)$. The (homogeneous) Wolff potential $\W_{\alpha, p}\sigma$  is defined by (see \cite{AH96}, \cite{KuMi14})
\[
\W_{\alpha, p}\sigma(x) = \int_{0}^{\infty} \left[ \frac{\sigma(B(x,r))}{r^{n-\alpha p}}\right]^{\frac{1}{p-1}}\; \frac{dr}{r}, \quad x \in \R^n,
\]
where $B(x,r) = \{ y \in \R^n : |x-y|< r \}$ is a ball centered at $x \in \R^n$ of radius $r >0$.

Observe that $\W_{\alpha, p}\sigma$ is always positive since $\sigma \not\equiv 0$. Moreover, either $\W_{\alpha, p}\sigma \equiv +\infty$ or $\W_{\alpha, p}\sigma < +\infty$ a.e. In other words,  $\W_{\alpha, p}\sigma < +\infty$ a.e.  if and only if  $\W_{\alpha, p}\sigma(x_0) < +\infty$ for some $x_{0} \in \R^n$. 

In the linear case, when $p=2$, $\W_{\alpha, 2}\sigma = \I_{2\alpha}\sigma$, and in particular, $\W_{1, 2}\sigma = \I_{2}\sigma$
is the Newtonian potential. 

The energy of $\sigma$ is given by
\[
\mathcal{E}_{\alpha, p}(\sigma) = \big\Vert \I_{\alpha}\sigma \big\Vert^{p'}_{L^{p'}(\R^n)}.
\]
The fundamental Wolff's inequality, see \cite[Sec. 4.5]{AH96}, provides a certain estimate of the energy by means of the corresponding Wolff potential:
\begin{equation}\label{Wolff's ineq}
C^{-1} \mathcal{E}_{\alpha, p}(\sigma) \leq \int_{\R^n} \W_{\alpha, p} \sigma \;d\sigma \leq C \mathcal{E}_{\alpha, p}(\sigma),
\end{equation}
where $C=C(\alpha, n, p) \geq 1$. Consequently, 
\[
\W_{\alpha, p} \sigma \in L^{1}(\R^n, d\sigma)
\;\; \Longleftrightarrow \;\; 
 \mathcal{E}_{\alpha, p}(\sigma) < +\infty. 
\]
More generally, it was shown in \cite{COV00} (see also \cite{COV06}) that for $0 \leq q < p$, $p>1$, 
\[
\W_{\alpha, p} \sigma \in L^{\frac{q(p-1)}{p-q}}(\R^n, d\sigma)
\]
is equivalent to the trace inequality 
\begin{equation}\label{trace_ineq}
\left( \int_{\R^n} \big| \I_{\alpha}g \big|^{q}\;d\sigma \right)^{\frac{1}{q}} \leq C \left( \int_{\R^n} \big|  g \big|^{p}\;dx \right)^{\frac{1}{p}}, \quad \forall g \in L^{p}(\R^n),
\end{equation}
where $C$ is a constant independent of $g$. When $\alpha = k<\frac{n}{2}$ is a positive integer, \eqref{trace_ineq} is equivalent to the
generalized Sobolev inequality
\begin{equation}\label{trace_ineq2}
\left( \int_{\R^n} \big| g \big|^{q}\;d\sigma \right)^{\frac{1}{q}} \leq C \left( \int_{\R^n} \big| \nabla^{k} g \big|^{p}\;dx \right)^{\frac{1}{p}}, \quad \forall g \in C_{0}^{\infty}(\R^n),
\end{equation}
where $C$ is a constant independent of $g$.

For $1<p<\infty$, the $p$-Laplacian $\Delta_{p}$ is defined by
\[
\Delta_{p}u = \nabla \cdot \left(  \vert \nabla u \vert^{p-2} \nabla u \right), \quad  u \in W^{1,p}_{loc}(\Omega),
\] 
in the distributional sense, i.e., for every $\varphi \in  C_{0}^{\infty}(\Omega)$,
\[
\langle \Delta_{p}u, \varphi \rangle = \langle \nabla \cdot \left(  \vert \nabla u \vert^{p-2} \nabla u \right), \varphi \rangle = -\int_{\Omega} \vert \nabla u \vert^{p-2} \nabla u \cdot \nabla \varphi \;dx.
\]

\begin{Def}\label{Def_Sol_1}
Let $1<p<\infty$, $0<q<p-1$ and $\sigma, \mu \in \mathcal{M}^{+}(\Omega)$.
A function $u$ is said to be a finite energy solution to the equation
\begin{equation}\label{eq_domain_p}
-\Delta_{p} u = \sigma u^{q} +\mu \quad \text{in} \;\; \Omega
\end{equation}
 if $u \in L_{loc}^{q}(\Omega, d\sigma) \cap \dot{W}_{0}^{1,p}(\Omega)$, $u \geq 0$ $d\sigma\text{-}a.e.$ and
\[
\int_{\Omega} \vert \nabla u \vert^{p-2} \nabla u \cdot \nabla \varphi \;dx = \int_{\Omega} \varphi u^{q}\; d\sigma + 
\int_{\Omega} \varphi \;d\mu, \quad \varphi \in  C_{0}^{\infty}(\Omega).
\]
\end{Def}

We shall extend the notion of distributional solutions $u$ to equation
\eqref{eq_domain_p}, for $u$ not necessarily belonging to $W_{loc}^{1,p}(\Omega)$. We will understand such solutions in the potential-theoretic sense using $p$-superharmonic functions, which is equivalent to the notion of locally renormalized solutions in terms of test functions, see \cite{KKT09}.

A function $u \in W_{loc}^{1,p}(\Omega)$ is said to be $p$-harmonic if 
$u$ satisfies the $p$-Laplace equation
\[
-\Delta_{p}u = 0 \qquad \text{in} \;\; \Omega
\]
in the distributional sense. Note that every $p$-harmonic function has a continuous representative which coincides with $u$ a.e., see \cite{HKM06}. A function $u: \Omega \rightarrow (-\infty, +\infty]$ is $p$-superharmonic if 
$u$ is lower semicontinuous in $\Omega$, $u \not\equiv +\infty $ in each component of $\Omega$, and whenever $D$ is an open relatively compact subset of  $\Omega$ and $h \in C(\overline{D})$ is $p$-harmonic in $D$ with $h \leq u$ on $\partial D$, then $h \leq u$ on $D$. Also note that every $p$-superharmonic function $u$ in $\Omega$ has a quasicontinuous representative which coincides with $u$ $p$-quasi-everywhere in $\Omega$ (briefly, q.e.), i.e., everywhere except for a set of $p$-capacity zero. Here, the $p$-capacity of a compact set $E \subset \Omega$ is defined by 
\[
\text{cap}_{p}(E) :=  \inf \big\lbrace \Vert  \nabla u \Vert^{p}_{L^{p}(\Omega)}\colon \, \,  u \geq 1 \;\; \text{on} \;\; E, u \in C_{0}^{\infty}(\Omega) \big\rbrace.
\]
Notice that $\text{cap}_{p}(E)$ is equivalent to $\text{cap}_{1, p}(E)$ for compact sets $E \subset \Omega$.

A $p$-superharmonic function $u \geq 0$ does not necessarily belong to $W^{1,p}_{loc}(\Omega)$, but its truncation 
\[
T_{k}(u):= \text{min}(u,k)
\]
does for every $k \in \N$. Moreover, each $T_k(u)$ is a supersolution, i.e.,
\[
-\nabla \cdot \left(  \vert \nabla T_k(u)\vert^{p-2} \nabla T_k(u) \right) \geq 0
\]
in the distributional sense. The generalized (or weak) gradient of a $p$-superharmonic function $u$ is defined by (see \cite{HKM06}):  
\[
Du = \lim_{k \rightarrow \infty} \nabla \left( T_k(u) \right).
\]

Let $u$ be a $p$-superharmonic function in $\Omega$. Then $\vert Du \vert^{p-1}$ and consequently $\vert Du \vert^{p-2} Du$, are of class  $L^{r}_{loc}(\Omega)$ for every $1 \leq r < \frac{n}{n-1}$, see \cite{KM92}. This allows us to define a nonnegative distribution $-\Delta_{p}u $ by
\[
-\langle \Delta_{p}u, \varphi \rangle = \int_{\Omega} \vert Du \vert^{p-2} Du \cdot \nabla \varphi \;dx, \quad \varphi \in  C_{0}^{\infty}(\Omega).
\]
Thus, by the Riesz Representation Theorem, there exists a unique measure $\omega[u] \in \mathcal{M}^{+}(\Omega)$ so that $ -\Delta_{p}u = \omega[u]$. The measure $\omega[u]$ is called the Riesz measure of $u$.

\begin{Def}
For $\omega \in \mathcal{M}^{+}(\Omega)$, a function $u$ is said to be a solution to the equation 
\[
- \Delta_{p}u = \omega \qquad \text{in} \;\; \Omega
\]
(in the potential-theoretic sense) if $u$ is $p$-superharmonic in $\Omega$ and $\omega[u] = \omega$.

Thus, for $\sigma, \mu \in \mathcal{M}^{+}(\Omega)$, a function $u$ is said to be a solution to equation \eqref{eq_domain_p}
(in the potential-theoretic sense) if $u$ is $p$-superharmonic in $\Omega$ so that $u \in L^{q}_{loc}(\Omega, d\sigma)$ and $d\omega[u] = u^{q} d\sigma + d\mu$.

A supersolution to \eqref{eq_domain_p} is a nonnegative $p$-superharmonic function $u$ in $\Omega$ so that $u \in L^{q}_{loc}(\Omega, d\sigma)$ and 
\[
\int_{\Omega} \vert Du \vert^{p-2} Du \cdot \nabla \varphi \;dx 
\geq \int_{\Omega} u^q \varphi \;d\sigma + \int_{\Omega} \varphi \;d\mu, \quad \varphi \in C_0^{\infty}(\Omega) \;\; \text{with} \;\; \varphi \geq 0.
\] 
\end{Def}

Note that if $u \in W_{loc}^{1,p}(\Omega)$ is a solution (or supersolution) to equation \eqref{eq_domain_p}, then the generalized gradient $Du$ coincides with the regular gradient $u$. Thus $u$ is the usual distributional solution (or supersolution, repectively).

The following weak continuity result, see \cite{TW02}, will be used to prove the existence of $p$-superharmonic solutions to quasilinear equations.

\begin{Thm} [\cite{TW02}] \label{weak_cont_p-Laplacian}
Suppose $\lbrace u_{j} \rbrace_{1}^{\infty}$ is a sequence of nonnegative $p$-superharmonic functions in $\Omega$ such that $u_j \rightarrow u$ a.e. as $j \rightarrow \infty$, where $u$ is a $p$-superharmonic function in $\Omega$. Then $\omega[u_{j}]$ converges weakly to $\omega[u]$, that is,
\[
\lim_{j \rightarrow \infty} \int_{\Omega} \varphi \; d\mu[u_j] = \int_{\Omega} \varphi \; d\mu[u]
\]
for all $\varphi \in C_{0}^{\infty}(\Omega)$.
\end{Thm}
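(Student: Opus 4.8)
The statement is the fundamental weak continuity theorem of \cite{TW02}; here is the route I would take to prove it. Since $\int_\Omega\varphi\,d\omega[u_j]=\int_\Omega|Du_j|^{p-2}Du_j\cdot\nabla\varphi\,dx$ for $\varphi\in C_0^\infty(\Omega)$, the assertion follows once
\[
|Du_j|^{p-2}Du_j\longrightarrow|Du|^{p-2}Du\qquad\text{in }L^1_{\mathrm{loc}}(\Omega),
\]
and by Vitali's convergence theorem this reduces to two things: (i) equi-integrability of $\{|Du_j|^{p-2}Du_j\}$ on compact subsets of $\Omega$, and (ii) the convergence $Du_j\to Du$ in measure (equivalently, a.e.\ along subsequences) on compact subsets. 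Once these hold, since the limit is identified, the convergence is along the full sequence, and pairing with $\nabla\varphi$ finishes the proof.

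For (i), fix $\Omega'\Subset\Omega''\Subset\Omega$. Covering $\Omega'$ by finitely many balls $B(x_i,2R)\subset\Omega''$ with $u_j(x_i)\to u(x_i)<\infty$, and using that a $p$-superharmonic function coincides with its essential lower limit at every point, the weak Harnack inequality for nonnegative $p$-supersolutions gives $\sup_j\|u_j\|_{L^s(\Omega')}<\infty$ for $0<s<\tfrac{n(p-1)}{n-p}$; the local gradient estimates of \cite{KM92} (cf.\ \cite{HKM06}) then give $\sup_j\|Du_j\|_{L^r(\Omega')}<\infty$ for every $r<\tfrac{n(p-1)}{n-1}$, and choosing $p-1<r<\tfrac{n(p-1)}{n-1}$ makes $\{|Du_j|^{p-2}Du_j\}$ bounded in $L^{r/(p-1)}(\Omega')$ with exponent $>1$, hence equi-integrable; Caccioppoli estimates applied to the supersolutions $T_k(u_j)$ also give $\sup_j\|\nabla T_k(u_j)\|_{L^p(\Omega')}<\infty$ for each $k$, and testing $-\Delta_p u_j=\omega[u_j]$ against a cutoff gives $\sup_j\omega[u_j](\Omega')<\infty$ and $\sup_j\omega[T_k(u_j)](\Omega')<\infty$. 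For (ii), I would prove $\nabla T_k(u_j)\to\nabla T_k(u)$ in measure on $\Omega'$ for each fixed $k$ and then untruncate: since $Du_j=\nabla T_k(u_j)$ a.e.\ on the open set $\{u_j<k\}$, $Du=\nabla T_k(u)$ a.e.\ on $\{u<k\}$, and $|\{u_j\ge k\}\cap\Omega'|+|\{u\ge k\}\cap\Omega'|\le C\,k^{-s}$ uniformly in $j$ by the bound just obtained, one gets $\limsup_j|\{|Du_j-Du|>\lambda\}\cap\Omega'|\le C\,k^{-s}$ for every $\lambda>0$, and letting $k\to\infty$ yields $Du_j\to Du$ in measure on $\Omega'$.

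It remains to prove the truncated convergence, and this is the step I expect to be the main obstacle. Put $v_j=T_k(u_j)$, $v=T_k(u)$; these are bounded, uniformly bounded in $W^{1,p}_{\mathrm{loc}}$, with $v_j\to v$ a.e.\ and (passing to a subsequence) $\nabla v_j\rightharpoonup\nabla v$ in $L^p_{\mathrm{loc}}$ and $\omega[v_j]$ weakly-$*$ convergent as Radon measures. Fix a cutoff $\eta\ge0$ and consider
\[
I_j=\int_\Omega\eta\,\bigl(|\nabla v_j|^{p-2}\nabla v_j-|\nabla v|^{p-2}\nabla v\bigr)\cdot\nabla(v_j-v)\,dx\ \ge\ 0,
\]
the nonnegativity coming from the monotonicity of $\xi\mapsto|\xi|^{p-2}\xi$; by the Boccardo--Murat lemma, $I_j\to0$ forces $\nabla v_j\to\nabla v$ in measure locally. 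Expanding $I_j$: the terms pairing the fixed field $|\nabla v|^{p-2}\nabla v$ with $\nabla(v_j-v)$ vanish in the limit by weak convergence; the terms in which $\eta$ is differentiated vanish since $(v_j-v)_\pm\to0$ in $L^p_{\mathrm{loc}}$ while $|\nabla v_j|^{p-2}\nabla v_j$ is bounded in $L^{p'}_{\mathrm{loc}}$; and, using that $\int|\nabla v_j|^{p-2}\nabla v_j\cdot\nabla\psi=\int\psi\,d\omega[v_j]$ holds for quasicontinuous bounded $\psi\ge0$ of compact support (legitimate since $\omega[v_j]$ has locally finite energy, hence charges no set of $p$-capacity zero), the whole matter reduces to
\[
\int_\Omega\eta\,(v_j-v)_+\,d\omega[v_j]\ \longrightarrow\ 0 .
\]

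Here soft arguments stop working: although $(v_j-v)_+\to0$ a.e.\ and is uniformly bounded, the truncated Riesz measures $\omega[v_j]=-\Delta_p T_k(u_j)$ could a priori concentrate on the shrinking region $\{T_k(u_j)>T_k(u)\}$, and excluding this is where the potential theory of $p$-superharmonic functions is essential. I would handle it via the comparison principle together with capacitary (quasicontinuity) estimates showing $T_k(u_j)\le T_k(u)+o(1)$ off open sets of arbitrarily small $p$-capacity, and via a reduction of the general a.e.-convergent case to monotone sequences by means of the lower-semicontinuous regularized infima $\widehat{\inf_{i\ge j}u_i}$, which increase q.e.\ to $u$ and for which the weak continuity is immediate from comparison; this is precisely the line carried out in \cite{TW02}. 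Once the truncated gradient convergence, and hence $Du_j\to Du$ a.e., is in hand, step (i) and Vitali's theorem give $|Du_j|^{p-2}Du_j\to|Du|^{p-2}Du$ in $L^1_{\mathrm{loc}}(\Omega)$ along the full sequence, and pairing with $\nabla\varphi$ completes the argument.
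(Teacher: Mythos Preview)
The paper does not prove this theorem: it is stated as Theorem~\ref{weak_cont_p-Laplacian} and attributed to \cite{TW02} without any argument, and is used as a black box in the proof of Theorem~\ref{thm_main}. There is therefore nothing in the paper to compare your proposal against.

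As for the proposal itself, your reduction to $|Du_j|^{p-2}Du_j\to|Du|^{p-2}Du$ in $L^1_{\mathrm{loc}}$ via Vitali, with the equi-integrability coming from the uniform $L^r$ gradient bounds of \cite{KM92} for some $r>p-1$, is sound, and the untruncation step is fine. The genuine content, as you correctly identify, is the control of $\int\eta\,(v_j-v)_+\,d\omega[v_j]$ for the truncations; here you defer to \cite{TW02} rather than supply an argument, so what you have written is an accurate roadmap rather than a self-contained proof. One technical point: when you invoke $\int|\nabla v_j|^{p-2}\nabla v_j\cdot\nabla\psi=\int\psi\,d\omega[v_j]$ for quasicontinuous bounded $\psi$, you justify this by saying $\omega[v_j]$ has locally finite energy and hence charges no set of $p$-capacity zero, but this is not automatic for an arbitrary $p$-superharmonic function; you need either to work with the approximants for which such absolute continuity is available, or to argue more carefully (the original \cite{TW02} proof avoids this by a different route, comparing with solutions of Dirichlet problems on small balls rather than testing directly against $v_j-v$).
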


We shall use the following lower bounds for supersolutions.

\begin{Thm}[\cite{CV14a}] \label{lower_ptwise_est_homo} 
Let $1<p<n$, $0<q<p-1$ and $\sigma \in \mathcal{M}^{+}(\mathbb{R}^n)$.
Suppose $u$ is a nontrivial supersolution to equation \eqref{homo_eq_p-lapacain}. Then $u$ satisfies the inequality 
\[
u \geq c \left( \W_{1, p}\sigma \right)^{\frac{p-1}{p-1-q}}
\]
where $c = c(n, p, q) > 0$.
\end{Thm}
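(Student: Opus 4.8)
The plan is to pass from the differential inequality to a pointwise Wolff‑potential inequality by means of the lower Kilpel\"ainen--Mal\'y estimate, and then to bootstrap it up to the critical exponent $\gamma^{*}:=\frac{p-1}{p-1-q}$ by exploiting a self‑improving property of Wolff potentials.

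\emph{Reduction to an integral inequality.} A nonnegative $p$‑superharmonic function which is not identically zero is strictly positive throughout $\R^{n}$ by the minimum principle; and, $u$ being a supersolution of \eqref{homo_eq_p-lapacain} with $u\in L^{q}_{loc}(\R^{n},d\sigma)$, its Riesz measure satisfies $\omega[u]\ge u^{q}\,d\sigma$ in $\mathcal{M}^{+}(\R^{n})$. The lower Kilpel\"ainen--Mal\'y pointwise estimate, applied on balls $B(x,2R)$ and with $R\to\infty$ (the hypothesis $1<p<n$ is what keeps the global Wolff potential finite and makes this passage legitimate), produces a constant $c_{1}=c_{1}(n,p)>0$ with
\[
u(x)\ \ge\ c_{1}\,\W_{1,p}\omega[u](x)\ \ge\ c_{1}\,\W_{1,p}\!\left(u^{q}\,d\sigma\right)(x)=:c_{1}\,\mathcal{N}(u)(x),\qquad x\in\R^{n},
\]
by monotonicity of the Wolff potential in the measure. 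The operator $\mathcal{N}$ is monotone and positively homogeneous of degree $\theta:=\tfrac{q}{p-1}\in(0,1)$, and $m:=\W_{1,p}\sigma<+\infty$ a.e.

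\emph{A self‑improving Wolff estimate.} The key lemma is that for every $\beta\ge 0$ there exists $c_{0}=c_{0}(n,p,\beta)>0$ with
\[
\W_{1,p}\!\left(m^{\beta}\,d\sigma\right)(x)\ \ge\ c_{0}\,m(x)^{\,1+\beta/(p-1)},\qquad x\in\R^{n}.
\]
To prove it, keep in the outer Wolff integral only the points $y$ lying close to $x$ relative to the running scale $r$; for such $y$ the elementary inclusion of balls gives $m(y)=\W_{1,p}\sigma(y)\ge c\,F(r)$, where $F(r):=\int_{2r}^{\infty}\bigl[\sigma(B(x,s))\,s^{\,p-n}\bigr]^{1/(p-1)}\tfrac{ds}{s}$ is continuous and nonincreasing, with $F(0^{+})=m(x)$ and with $-\,dF$ absolutely continuous. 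Inserting this lower bound reduces the claim to the elementary inequality $\int_{0}^{\infty}F(r)^{\beta/(p-1)}\bigl(-\,dF(r)\bigr)\ge c\,m(x)^{1+\beta/(p-1)}$, which follows from the change of variable $t=F(r)$. Since $\gamma^{*}$ is the fixed point of $\gamma\mapsto 1+\theta\gamma$, the choice $\beta=q\gamma^{*}$ shows that $w:=m^{\gamma^{*}}$ satisfies $\mathcal{N}(w)\ge c_{0}\,w$.

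\emph{Bootstrap.} Iterating the integral inequality and using monotonicity of $\mathcal{N}$, one gets $u\ge c_{1}^{\,s_{k}}\,\mathcal{N}^{k}(u)$ with $s_{k}=\sum_{j=0}^{k-1}\theta^{j}\uparrow\gamma^{*}$. Starting from a preliminary bound $u\ge\delta_{0}\,m^{\gamma_{0}}$ on a fixed ball (a strictly positive such bound, for a suitable $\gamma_{0}\ge 0$, is available from the first step together with the weak Harnack inequality for positive $p$‑superharmonic functions) and applying the self‑improving estimate of the second step repeatedly, one obtains $\mathcal{N}^{k}(u)\ge\delta_{k}\,m^{\gamma_{k}}$ on that ball, where $\gamma_{k+1}=1+\theta\gamma_{k}\uparrow\gamma^{*}$ and $\delta_{k+1}=c_{0}\,\delta_{k}^{\theta}$, so that $\delta_{k}\to c_{0}^{1/(1-\theta)}=c_{0}^{\gamma^{*}}>0$ and $c_{1}^{\,s_{k}}\to c_{1}^{\gamma^{*}}$. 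Letting $k\to\infty$ gives $u\ge (c_{1}c_{0})^{\gamma^{*}}\,m^{\gamma^{*}}$ on the ball, with a constant independent of the ball; exhausting $\R^{n}$ by balls yields $u\ge c\,(\W_{1,p}\sigma)^{\frac{p-1}{p-1-q}}$ with $c=c(n,p,q)>0$.

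The main obstacle is the self‑improving inequality of the second step, together with the bookkeeping in the third step ensuring that the multiplicative constants produced by the iteration stabilise at a strictly positive value instead of collapsing to $0$ — this is exactly where the sub‑natural growth restriction $0<q<p-1$, i.e.\ $\theta<1$, enters in an essential way.
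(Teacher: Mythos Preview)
The paper does not supply its own proof of this theorem; it is quoted verbatim from \cite{CV14a}. Your overall strategy---reduce to the global integral inequality $u\ge c_{1}\,\W_{1,p}(u^{q}\,d\sigma)$ via the lower Kilpel\"ainen--Mal\'y estimate, and then iterate using the self-improving Wolff inequality $\W_{1,p}(m^{\beta}\,d\sigma)\ge c_{0}\,m^{1+\beta/(p-1)}$ with $m=\W_{1,p}\sigma$---is precisely the approach of that reference, and your observation that the iteration constants stabilise because $\theta=q/(p-1)<1$ is the crucial point.

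There is, however, a real gap in the bootstrap as written. You begin from a bound $u\ge\delta_{0}\,m^{\gamma_{0}}$ valid only on a fixed ball $B$ and then assert that the self-improving estimate yields $\mathcal{N}^{k}(u)\ge\delta_{k}\,m^{\gamma_{k}}$ on $B$. But the inequality $\W_{1,p}(m^{\beta}\,d\sigma)\ge c_{0}\,m^{1+\beta/(p-1)}$ needs the \emph{full} weight $m^{\beta}\,d\sigma$ on the left; from a bound on $B$ one only obtains $\mathcal{N}^{k+1}(u)\ge\delta_{k}^{\theta}\,\W_{1,p}\bigl(m^{q\gamma_{k}}\chi_{B}\,d\sigma\bigr)$, and this truncated potential need not dominate $c_{0}\,m^{\gamma_{k+1}}$ on $B$ when $\sigma$ has substantial mass outside $B$ (then $m$ is large on $B$ while the truncated potential is small). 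The repair is to run the whole iteration with the restricted data $\sigma_{B}:=\sigma|_{B}$ and $m_{B}:=\W_{1,p}\sigma_{B}$: the local bound $u\ge\delta_{0}$ on $B$ becomes $u^{q}\ge\delta_{0}^{q}$ $d\sigma_{B}$-a.e., hence $u\ge c_{1}\,\W_{1,p}(u^{q}\,d\sigma)\ge c_{1}\delta_{0}^{\theta}\,m_{B}$ \emph{globally}; from this point the self-improving estimate for the pair $(\sigma_{B},m_{B})$ applies globally at each step, giving $u\ge c\,m_{B}^{\gamma^{*}}$ on all of $\R^{n}$ with $c=c(n,p,q)$ independent of $B$ and of $\delta_{0}$ (exactly by your stabilisation argument). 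Finally let $B\uparrow\R^{n}$, so that $m_{B}\uparrow m$ by monotone convergence, and the desired inequality follows.
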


\begin{Thm} [\cite{CV14b}] \label{lower_ptwise_est}  
Let $1<p<n$, $0<q<p-1$, $0<\alpha<\frac{n}{p}$ and $\sigma \in \mathcal{M}^{+}(\mathbb{R}^n)$. Suppose $u \in L^{q}_{loc}(\mathbb{R}^n, d\sigma)$ satisfying
\[
u \geq  \W_{\alpha, p} (u^{q}d\sigma) \quad d\sigma\text{-}a.e.
\]
Then, $u$ satisfies the inequality
\[
u \geq c \left( \W_{\alpha, p}\sigma \right)^{\frac{p-1}{p-1-q}} \quad d\sigma\text{-}a.e.,
\]
where $c = c(\alpha, n, p, q) > 0$.
\end{Thm}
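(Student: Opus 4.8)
The plan is to \emph{bootstrap} the hypothesis $u\ge\W_{\alpha,p}(u^{q}\,d\sigma)$, which holds $d\sigma$-a.e., up to the asserted power of $\W_{\alpha,p}\sigma$ by iterating the inequality; the main tool is a single self-improving pointwise estimate for Wolff potentials. Write $\beta=\frac{p-1}{p-1-q}$, $\lambda=\frac{q}{p-1}$ and $\kappa=\frac{n-\alpha p}{p-1}$; since $0<q<p-1$ and $\alpha<\frac{n}{p}$ we have $\beta>1$, $\lambda\in(0,1)$, $\kappa>0$, and $\beta$ is the fixed point of $s\mapsto1+\lambda s$. We may assume $u\not\equiv0$ $d\sigma$-a.e.\ (the case $u\equiv0$ being degenerate; cf.\ the nontriviality hypothesis in Theorem~\ref{lower_ptwise_est_homo}). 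The lemma I would prove first reads: for every $\tau\in\M^{+}(\R^n)$ and every $\gamma\ge0$,
\[
\W_{\alpha,p}\bigl((\W_{\alpha,p}\tau)^{\gamma}\,d\tau\bigr)(x)\ \ge\ A(\gamma)\,\bigl(\W_{\alpha,p}\tau(x)\bigr)^{1+\frac{\gamma}{p-1}},\qquad x\in\R^n,
\]
where $A(\gamma)=A(\gamma,\alpha,n,p)>0$ is continuous in $\gamma$ and bounded away from $0$ and $\infty$ on every bounded $\gamma$-interval. The mechanism: for $z\in B(x,r)$ the inclusion $B(x,s)\subset B(z,2s)$ (valid when $s\ge r$) yields the tail bound $\W_{\alpha,p}\tau(z)\ge 2^{-\kappa}\,\Phi(x,r)$, where $\Phi(x,r):=\int_{r}^{\infty}\bigl(\tau(B(x,s))/s^{n-\alpha p}\bigr)^{1/(p-1)}\,\frac{ds}{s}$; hence $\int_{B(x,r)}(\W_{\alpha,p}\tau)^{\gamma}\,d\tau\ge 2^{-\kappa\gamma}\,\Phi(x,r)^{\gamma}\,\tau(B(x,r))$, and inserting this into the outer Wolff integral and using $\frac{d}{dr}\Phi(x,r)=-\bigl(\tau(B(x,r))/r^{n-\alpha p}\bigr)^{1/(p-1)}/r$ telescopes the integral to $\frac{2^{-\kappa\gamma/(p-1)}}{\gamma/(p-1)+1}\,\bigl(\W_{\alpha,p}\tau(x)\bigr)^{1+\gamma/(p-1)}$, since $\Phi(x,0^{+})=\W_{\alpha,p}\tau(x)$ and $\Phi(x,\infty)=0$.

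Next I would produce a crude positive starting bound. Since $u\not\equiv0$ there is a ball $B_{1}$ with $m_{0}:=\int_{B_{1}}u^{q}\,d\sigma\in(0,\infty)$, the finiteness following from $u\in L^{q}_{loc}(\R^n,d\sigma)$. Fix any ball $B$ with $\sigma(B)>0$ (otherwise there is nothing to prove for that $B$). Restricting the hypothesis to $B_{1}$ and using that $u^{q}\chi_{B_{1}}\sigma$ is a measure of mass $m_{0}$ supported in $B_{1}$, one gets $u^{q}\chi_{B_{1}}\sigma(B(y,r))=m_{0}$ for $y\in B$ and all $r$ past a fixed radius, whence $u\ge\W_{\alpha,p}(u^{q}\chi_{B_{1}}\,d\sigma)\ge\delta_{B}>0$ $d\sigma$-a.e.\ on $B$, with $\delta_{B}=c\,m_{0}^{1/(p-1)}$ depending on $B,B_{1},\kappa$. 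Setting $\tau:=\chi_{B}\sigma$ and $W_{B}:=\W_{\alpha,p}\tau$, this yields $u\ge\W_{\alpha,p}(u^{q}\chi_{B}\,d\sigma)\ge\delta_{B}^{q/(p-1)}\,W_{B}$ $d\sigma$-a.e., i.e.\ a base inequality $u\ge c_{0}W_{B}^{s_{0}}$ with $s_{0}=1<\beta$ and $c_{0}=\delta_{B}^{q/(p-1)}>0$; note also $W_{B}<\infty$ $d\sigma$-a.e.

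Now comes the iteration. Assuming $u\ge c_{j}W_{B}^{s_{j}}$ $d\sigma$-a.e., I restrict to $B$ and invoke the homogeneity $\W_{\alpha,p}(t\nu)=t^{1/(p-1)}\W_{\alpha,p}\nu$, the monotonicity of $\W_{\alpha,p}$ in the measure, and the lemma above with $\gamma=s_{j}q$:
\[
u\ \ge\ \W_{\alpha,p}(u^{q}\chi_{B}\,d\sigma)\ \ge\ c_{j}^{q/(p-1)}\,\W_{\alpha,p}\bigl(W_{B}^{\,s_{j}q}\,d\tau\bigr)\ \ge\ c_{j}^{q/(p-1)}A(s_{j}q)\,W_{B}^{\,1+\lambda s_{j}}\qquad d\sigma\text{-a.e.}
\]
Thus $s_{j+1}=1+\lambda s_{j}$ and $\log c_{j+1}=\lambda\log c_{j}+\log A(s_{j}q)$. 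Since $\lambda\in(0,1)$ and $s_{0}=1<\beta$, we have $s_{j}\nearrow\beta$; as $s_{j}q$ stays in the compact interval $[q,\beta q]$, the numbers $A(s_{j}q)$ are pinched between two positive constants depending only on $\alpha,n,p,q$, and the affine recursion forces $c_{j}\to c_{*}:=A(\beta q)^{1/(1-\lambda)}=A(\beta q)^{\beta}>0$, which does not depend on $B$. Letting $j\to\infty$ on a common $d\sigma$-full-measure set (on which $W_{B}<\infty$) gives $u\ge c_{*}W_{B}^{\beta}$ $d\sigma$-a.e.; finally, letting $B=B(0,k)\nearrow\R^n$ and using the monotone convergence $W_{B(0,k)}\nearrow\W_{\alpha,p}\sigma$ pointwise, I conclude $u\ge c_{*}(\W_{\alpha,p}\sigma)^{\beta}$ $d\sigma$-a.e., which is the claim with $c=c_{*}=c(\alpha,n,p,q)$.

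I expect the self-improving lemma to be the main obstacle: it is the one place where the sub-natural exponent is genuinely used, and making the tail bound and the ``telescoping'' identity $\int_{0}^{\infty}\Phi^{\gamma/(p-1)}\,(-d\Phi)=\frac{(\W_{\alpha,p}\tau)^{\gamma/(p-1)+1}}{\gamma/(p-1)+1}$ rigorous — the absolute continuity of $r\mapsto\Phi(x,r)$, and the degenerate case $\W_{\alpha,p}\tau(x)=+\infty$ — takes some care. The crude starting bound and the recursion bookkeeping are then routine, the one point worth double-checking being that the constants produced by the iteration remain uniform over the exhausting balls $B$.
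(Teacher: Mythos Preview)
The paper does not prove this statement; it is quoted from \cite{CV14b} as a background result in the preliminaries, so there is no in-paper proof to compare against. Your argument is correct and is essentially the standard one used in \cite{CV14b} and the surrounding literature: the self-improving inequality
\[
\W_{\alpha,p}\bigl((\W_{\alpha,p}\tau)^{\gamma}\,d\tau\bigr)\ \ge\ A(\gamma)\,(\W_{\alpha,p}\tau)^{1+\gamma/(p-1)},
\]
proved exactly via the tail bound $\W_{\alpha,p}\tau(z)\ge 2^{-\kappa}\Phi(x,r)$ for $z\in B(x,r)$ and the telescoping identity for $\Phi$, is the key iteration lemma there as well, and the bootstrap $s_{j+1}=1+\lambda s_{j}\to\beta$ together with the contractive recursion $\log c_{j+1}=\lambda\log c_{j}+\log A(s_{j}q)$ is precisely the mechanism that produces the sharp exponent and a constant independent of the seed.

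The points you flag are the only delicate ones, and your handling of them is sound. The nontriviality of $u$ is an implicit hypothesis (the statement is vacuous for $u\equiv 0$, cf.\ the explicit ``nontrivial'' in Theorem~\ref{lower_ptwise_est_homo}). The finiteness $W_{B}<\infty$ $d\sigma$-a.e.\ follows, as you indicate, from your base estimate $u\ge c_{0}W_{B}$ combined with $u\in L^{q}_{loc}(\R^n,d\sigma)$. The absolute continuity of $r\mapsto\Phi(x,r)$ is genuine (it is an indefinite integral), so the telescoping is rigorous; and the independence of $c_{*}$ from $c_{0}$ is exactly the consequence of $\lambda<1$ that makes the exhaustion $B\uparrow\R^n$ legitimate.
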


The following important result, \cite{KM94}, is concerned with pointwise estimate of nonnegative $p$-superharmonic functions in terms of Wolff's potential.

\begin{Thm} [\cite{KM94}]\label{pointwise_est_p-superharmonic} 
Let $1<p<n$ and $\omega \in \mathcal{M}^{+}(\mathbb{R}^n)$ Suppose 
$u$ is a $p$-superharmonic function in $\mathbb{R}^n$ 
satisfying
\[
\begin{cases}
-\Delta_{p} u = \omega \quad \text{in} \;\; \mathbb{R}^n, \\ 
\liminf\limits_{\vert x \vert \rightarrow \infty} u(x) = 0
\end{cases}
\]
Then 
\[
K^{-1}\W_{1,p} \omega \leq u \leq K \W_{1,p}\omega,
\]
where $K=K(n,p) \geq 1$.
\end{Thm}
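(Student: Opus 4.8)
The statement is from \cite{KM94}, so in the paper one would simply cite it; but here is how I would prove it. The plan is to reduce the global two-sided bound to the \emph{local} pointwise estimates of Kilpel\"ainen and Mal\'y and then pass to the limit using the decay hypothesis. Write $\W_{1,p}^{\rho}\omega(x) = \int_0^{\rho}\big[\omega(B(x,r))/r^{n-p}\big]^{1/(p-1)}\,dr/r$ for the truncated Wolff potential, so that $\W_{1,p}^{\rho}\omega(x)\uparrow \W_{1,p}\omega(x)$ as $\rho\to\infty$ by monotone convergence. The local estimates assert that if $v\ge 0$ is $p$-superharmonic in $B(x_0,2R)$ with Riesz measure $\omega$, then
\[
c_1\,\W_{1,p}^{R}\omega(x_0)\ \le\ v(x_0)\ \le\ c_2\Big(\inf_{B(x_0,R)}v\;+\;\W_{1,p}^{2R}\omega(x_0)\Big),
\]
with $c_1,c_2$ depending only on $n$ and $p$.

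Granting these, I would argue as follows. Fix $x\in\R^n$. Since $u$ is $p$-superharmonic on all of $\R^n$ with $-\Delta_p u=\omega$, the left inequality applies on every ball $B(x,2R)$; letting $R\to\infty$ gives $u(x)\ge c_1\W_{1,p}\omega(x)$. For the upper bound, apply the right inequality on $B(x,2R)$: the hypothesis $\liminf_{|y|\to\infty}u(y)=0$ together with $u\ge 0$ forces $\inf_{B(x,R)}u\to 0$ as $R\to\infty$ (the infima decrease in $R$, and every $B(x,R)$ with $R$ large contains points where $u$ is as small as we wish), while $\W_{1,p}^{2R}\omega(x)\uparrow\W_{1,p}\omega(x)$; hence $u(x)\le c_2\W_{1,p}\omega(x)$. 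Taking $K=\max\{c_1^{-1},c_2\}$ gives the theorem. (Here one tacitly uses that a nontrivial $p$-superharmonic function is finite q.e., so both sides are meaningful.)

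The real work is in the local estimates. For the lower bound I would restrict $\omega$ to $B(x_0,r)$, compare $u$ with the $p$-potential of $\omega|_{B(x_0,r)}$ on that ball via the comparison principle, bound that potential from below on $B(x_0,r)$ by a constant times $\big[\omega(B(x_0,r))/r^{n-p}\big]^{1/(p-1)}$ using the definition of $p$-superharmonicity and the shape of the fundamental solution of $\Delta_p$, and then sum over dyadic radii $r=2^{-j}R$. The upper bound is the main obstacle: following Kilpel\"ainen--Mal\'y, one introduces a normalized capacitary average $a_j$ of $u$ over $B(x_0,2^{-j}R)$ and derives, from a Caccioppoli inequality on the super-level set $\{u<\lambda a_j\}$ combined with the Sobolev inequality and a careful cutoff, a quasi-additive recursion roughly of the form $a_{j+1}\le \theta\,a_j + C\big[\omega(B(x_0,2^{-j}R))/(2^{-j}R)^{n-p}\big]^{1/(p-1)}$ with $\theta<1$; iterating and summing the geometric-type series reconstructs $\W_{1,p}^{2R}\omega(x_0)$. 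The delicate point — and the step I expect to be hardest to pin down — is engineering that recursion so that the accumulated constants stay bounded, and this is precisely where $p<n$ and the structure of the $p$-Laplacian enter. In a paper of the present kind it is cleanest to invoke \cite{KM94} (with \cite{KM92}, \cite{TW02} for the background on $p$-superharmonic functions) for the local estimates and to record only the short limiting argument above.
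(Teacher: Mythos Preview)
The paper does not prove this theorem at all: it is stated in the Preliminaries with the attribution \cite{KM94} and is used as a black box, so there is no ``paper's own proof'' to compare against. Your proposal correctly anticipates this and then supplies exactly the standard derivation---the local Kilpel\"ainen--Mal\'y two-sided estimate on $B(x,2R)$ followed by $R\to\infty$, using monotone convergence for the lower bound and the decay hypothesis $\liminf_{|x|\to\infty}u(x)=0$ to kill the infimum term in the upper bound---together with an honest sketch of the recursive Caccioppoli/level-set machinery behind the local upper estimate. That is the right route and the right citation; for the purposes of this paper there is nothing to add beyond invoking \cite{KM94}.
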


The next three lemmas are discussed in \cite{CV14a}, which will be used in our arguments occasionally.

\begin{Lem} [\cite{CV14a}] \label{converse_existence_thm} 
Let $1<p<n$, $0<q<p-1$ and $\sigma \in \mathcal{M}^{+}(\mathbb{R}^n)$.
Suppose there exists a nontrivial supersolution $u \in L_{loc}^{q}(\mathbb{R}^n, d\sigma) \cap \dot{W}_{0}^{1,p}(\mathbb{R}^n)$ to equation \eqref{homo_eq_p-lapacain} Then 
\[
-\Delta_{p} u \in W^{-1,p'}(\mathbb{R}^n) \cap \mathcal{M}^{+}(\mathbb{R}^n) \quad \text{and} \quad u \in L^{1+q}(\mathbb{R}^n, d\sigma),
\]
for a quasicontinuous representative of $u$. Consequently,  \eqref{condition_sigma_alpha=1} holds.
\end{Lem}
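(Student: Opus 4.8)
The plan is to extract quantitative information from the assumption that a nontrivial supersolution $u$ lies in the finite-energy space $\dot W_0^{1,p}(\R^n)$, and then to invoke the lower pointwise bound of Theorem \ref{lower_ptwise_est_homo} to translate this into condition \eqref{condition_sigma_alpha=1}. First I would record that since $u \in \dot W_0^{1,p}(\R^n) \cap W^{1,p}_{loc}(\R^n)$ is a (distributional) supersolution to \eqref{homo_eq_p-lapacain}, the nonnegative distribution $-\Delta_p u$ dominates $u^q\,d\sigma$; testing against a sequence $\varphi_j \in C_0^\infty(\R^n)$ with $\nabla\varphi_j \to \nabla u$ in $L^p(\R^n)$ (and passing through truncations $T_k(u)$ so that the gradients are genuine, using that $|\nabla u|^{p-2}\nabla u \in L^{p'}$), one gets
\[
\int_{\R^n} u^q\,d\sigma \;\le\; \langle -\Delta_p u, u\rangle \;=\; \int_{\R^n} |\nabla u|^{p}\,dx \;<\; +\infty,
\]
so in particular $-\Delta_p u$ acts as a bounded functional on $\dot W_0^{1,p}$, i.e.\ $-\Delta_p u \in W^{-1,p'}(\R^n)\cap \M^+(\R^n)$. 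The care needed here is the standard one of justifying the test-function computation for a $W^{1,p}_{loc}$-supersolution of possibly unbounded energy datum; this is handled by the usual truncation argument together with Fatou / monotone convergence as $k\to\infty$, and by quasicontinuity of $u$ so that the pairing $\int \varphi\,d\omega[u]$ is well-defined against a quasicontinuous representative.

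Next I would bring in the pointwise lower bound. By Theorem \ref{lower_ptwise_est_homo}, every nontrivial supersolution satisfies $u \ge c\,(\W_{1,p}\sigma)^{\frac{p-1}{p-1-q}}$ pointwise (q.e., hence $d\sigma$-a.e., since $\sigma$ does not charge sets of $p$-capacity zero — this is where the quasicontinuous representative matters). Raising to the power $1+q$ and integrating against $d\sigma$,
\[
\int_{\R^n} u^{1+q}\,d\sigma \;\ge\; c^{1+q}\int_{\R^n} \bigl(\W_{1,p}\sigma\bigr)^{\frac{(1+q)(p-1)}{p-1-q}}\,d\sigma,
\]
so it suffices to prove the finiteness of the left-hand side, i.e.\ $u \in L^{1+q}(\R^n,d\sigma)$. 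For this I would combine the two facts already in hand: the trace inequality \eqref{trace_inq} is not yet available (it is equivalent to the very conclusion \eqref{condition_sigma_alpha=1} we are after), so instead I would argue directly. Since $\mu_0 := -\Delta_p u = u^q\,d\sigma + (\text{nonnegative remainder}) \ge u^q\,d\sigma$ and $\mu_0 \in W^{-1,p'}(\R^n)$, one has by Theorem \ref{pointwise_est_p-superharmonic} (applied to the $p$-superharmonic function $u$, whose Riesz measure is $\omega[u]=\mu_0$, after checking $\liminf_{|x|\to\infty}u=0$ from $u\in\dot W_0^{1,p}(\R^n)$, $p<n$) that $u \le K\,\W_{1,p}\mu_0 \le K\,\W_{1,p}(u^q d\sigma + \dots)$; more usefully, $u \simeq \W_{1,p}\omega[u]$. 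Then
\[
\int_{\R^n} u^{1+q}\,d\sigma \;\le\; C\int_{\R^n} u\,\cdot u^{q}\,d\sigma \;=\; C\int_{\R^n} u\;d\bigl(u^q\sigma\bigr) \;\le\; C\int_{\R^n} u\;d\omega[u] \;=\; C\int_{\R^n} |\nabla u|^{p}\,dx \;<\;+\infty,
\]
where the last equality is again the test-function identity (now legitimate since $u\in\dot W_0^{1,p}$ and $\omega[u]\in W^{-1,p'}$). Hence $u\in L^{1+q}(\R^n,d\sigma)$, and combined with the displayed lower bound this yields $\W_{1,p}\sigma \in L^{\frac{(1+q)(p-1)}{p-1-q}}(\R^n,d\sigma)$, which is \eqref{condition_sigma_alpha=1}.

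The main obstacle I anticipate is the rigorous justification of the duality pairing $\int_{\R^n}|\nabla u|^{p}\,dx = \langle -\Delta_p u, u\rangle = \int u\,d\omega[u]$ for a function $u$ that need not a priori be in $W^{1,p}_{loc}$ before we know better, and whose defining measure is only in $W^{-1,p'}$. Everything else is a chain of monotone estimates. The fix is the now-standard recipe: work with truncations $T_k(u)$, which are genuine $W^{1,p}_{loc}$ supersolutions with $-\Delta_p T_k(u)\ge 0$; test the supersolution inequality for $u$ against $T_k(u)$ (or against $\varphi_j T_k(u)$ with cutoffs $\varphi_j \uparrow 1$), use that $|\nabla u|^{p-2}\nabla u\cdot \nabla T_k(u) = |\nabla T_k(u)|^{p}$ on $\{u<k\}$ and $=0$ elsewhere, and let $k\to\infty$ using monotone convergence on the left and the fact that $u$ is $d\omega[u]$-integrable against its quasicontinuous representative on the right (finite because the energy is finite). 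One also needs $u \ge 0$ $d\sigma$-a.e.\ upgraded to $u>0$ q.e., which follows from the strict positivity of $\W_{1,p}\sigma$ (noted in the preliminaries, since $\sigma\not\equiv 0$) via Theorem \ref{lower_ptwise_est_homo}. With these points settled, the conclusion $-\Delta_p u\in W^{-1,p'}(\R^n)\cap\M^+(\R^n)$, $u\in L^{1+q}(\R^n,d\sigma)$, and \eqref{condition_sigma_alpha=1} all drop out.
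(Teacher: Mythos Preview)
The paper does not give its own proof of this lemma; it is quoted from \cite{CV14a}. That said, your argument is essentially correct and in fact mirrors what the paper does for the analogous statements it \emph{does} prove (Lemma~\ref{converse_existence_thm_inhomo} and especially Lemma~\ref{lemma_converse_2_domain} in the case $p=2$): bound $\langle -\Delta_p u,\varphi\rangle$ by H\"older to get $-\Delta_p u\in W^{-1,p'}\cap\mathcal{M}^+$, use Brezis--Browder (Theorem~\ref{representative_thm}) together with $u^q\,d\sigma\le d\omega[u]$ to obtain $\int u^{1+q}\,d\sigma\le\int u\,d\omega[u]=\int|\nabla u|^p<\infty$, and then feed $u\in L^{1+q}(d\sigma)$ into the pointwise lower bound of Theorem~\ref{lower_ptwise_est_homo} to deduce \eqref{condition_sigma_alpha=1}.

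A few small points to tighten. First, the detour through Theorem~\ref{pointwise_est_p-superharmonic} and the two-sided estimate $u\simeq\W_{1,p}\omega[u]$ is unnecessary: once you know $\omega[u]\in W^{-1,p'}$, Theorem~\ref{representative_thm} plus the density identity $\langle -\Delta_p u,u\rangle=\int|\nabla u|^p$ give $\int u^{1+q}\,d\sigma\le\int|\nabla u|^p$ directly, with no need to check $\liminf_{|x|\to\infty}u=0$. Second, the constant $C$ in your displayed chain $\int u^{1+q}\,d\sigma\le C\int u\cdot u^q\,d\sigma$ should be $1$; this is an identity. Third, you assert that $\sigma$ does not charge $p$-capacity-zero sets without justification; this is exactly the content of Lemma~\ref{lemma_cap} (via the Wolff lower bound $u\ge K^{-1}\W_{1,p}(u^q d\sigma)$ from Theorem~\ref{pointwise_est_p-superharmonic}), and you should cite it. Alternatively, note that Theorem~\ref{lower_ptwise_est_homo} gives the lower bound \emph{pointwise} for the $p$-superharmonic representative, so integrating against $d\sigma$ requires no capacitary absolute continuity at all.
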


\begin{Lem} [\cite{CV14a}] \label{regulartiy_of_measure} 
Suppose $u \in L^{1+q}(\mathbb{R}^n, d\sigma)$ is a nontrivial supersolution to the integral equation 
\begin{equation}
u = \W_{1,p}(u^{q}d\sigma) \quad d\sigma\text{-}a.e.
\end{equation}
Then
\[
u^{q}d\sigma \in W^{-1,p'}(\mathbb{R}^n) \cap \mathcal{M}^{+}(\mathbb{R}^n).
\]
\end{Lem}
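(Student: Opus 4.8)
The plan is to set $\nu := u^{q}\,d\sigma$ and prove directly that $\nu$ has finite Wolff energy; by the equivalence recorded in \eqref{condition_mu_alpha=1}, this is precisely the assertion $\nu \in W^{-1,p'}(\R^n)$. First I would verify that $\nu \in \mathcal{M}^{+}(\R^n)$: nonnegativity is immediate from $u \geq 0$ and $\sigma \geq 0$, and $\nu$ is nontrivial because $u \not\equiv 0$ forces $\sigma(\{u>0\})>0$. Local finiteness follows from H\"older's inequality, since for any compact $K \subset \R^n$,
\[
\int_{K} u^{q}\,d\sigma \leq \Big( \int_{K} u^{1+q}\,d\sigma \Big)^{\frac{q}{1+q}} \sigma(K)^{\frac{1}{1+q}} < +\infty,
\]
using $u \in L^{1+q}(\R^n, d\sigma)$ and the local finiteness of $\sigma$.

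The core step is a one-line energy estimate. Because $u$ is a supersolution to the integral equation, $\W_{1,p}(u^{q}d\sigma) \leq u$ holds $d\sigma$-a.e., hence also $d\nu$-a.e. since $\nu \ll \sigma$. Integrating this inequality against $d\nu$ gives
\[
\int_{\R^n} \W_{1,p}\nu \; d\nu = \int_{\R^n} \W_{1,p}(u^{q}d\sigma)\, u^{q}\,d\sigma \leq \int_{\R^n} u \cdot u^{q}\,d\sigma = \int_{\R^n} u^{1+q}\,d\sigma < +\infty.
\]
By Wolff's inequality \eqref{Wolff's ineq} applied to $\nu$, this yields $\mathcal{E}_{1,p}(\nu) = \| \I_{1}\nu \|_{L^{p'}(\R^n)}^{p'} < +\infty$; equivalently, by \eqref{condition_mu_alpha=1}, $\nu = u^{q}\,d\sigma \in W^{-1,p'}(\R^n)$. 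Combined with the first step, this proves $u^{q}\,d\sigma \in W^{-1,p'}(\R^n) \cap \mathcal{M}^{+}(\R^n)$.

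I expect no serious obstacle here: the argument is essentially the finite-energy bound $\int_{\R^n} \W_{1,p}\nu\,d\nu \le \| u \|_{L^{1+q}(\R^n, d\sigma)}^{1+q}$ together with Wolff's inequality. The only points that require a little care are the measure-theoretic bookkeeping --- passing from a $d\sigma$-a.e. inequality to one valid $d\nu$-a.e. before integrating against $d\nu$, and checking that $\nu$ is locally finite so that $\W_{1,p}\nu$ and the energy identities are meaningful --- and the observation that the supersolution hypothesis forces $u<+\infty$ $\sigma$-a.e., so that $\W_{1,p}(u^{q}d\sigma)$ is finite $\sigma$-a.e. and the integrals above are legitimate. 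This lemma is the integral-equation counterpart of Lemma \ref{converse_existence_thm}.
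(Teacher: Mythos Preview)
Your argument is correct. The paper does not supply its own proof of this lemma; it is quoted from \cite{CV14a} without argument, so there is nothing in the present paper to compare against. Your route---bounding the Wolff energy $\int_{\R^n}\W_{1,p}\nu\,d\nu$ by $\|u\|_{L^{1+q}(\R^n,d\sigma)}^{1+q}$ via the supersolution inequality and then invoking Wolff's inequality \eqref{Wolff's ineq}---is the standard one and is exactly how the result is obtained in \cite{CV14a}. The measure-theoretic checks you flag (local finiteness of $\nu$ via H\"older, and the passage from $d\sigma$-a.e.\ to $d\nu$-a.e.\ using $\nu\ll\sigma$) are the only points requiring care, and you have handled them.
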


\begin{Lem} [\cite{CV14a}] \label{weak_comparison_principle} 
Let $\mu, \omega \in W^{-1,p}(\mathbb{R}^n) \cap \mathcal{M}^{+}(\mathbb{R}^n)$.
Suppose $u, v \in \dot{W}^{1,p}_{0}(\mathbb{R}^n)$ are solutions to the equations 
\[
-\Delta_{p}u = \mu \quad \text{in} \;\; \mathbb{R}^n \qquad \text{and} \qquad -\Delta_{p}v = \omega \quad \text{in} \;\; \mathbb{R}^n,
\]
respectively. If $\mu \leq \omega$, then $u \leq v$ q.e.
\end{Lem}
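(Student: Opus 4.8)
The plan is to use $w := (u-v)_{+} = \max(u-v,0)$ as a test function in the two weak formulations and to exploit the monotonicity of the vector field $\xi \mapsto |\xi|^{p-2}\xi$. First I would enlarge the class of admissible test functions from $C_{0}^{\infty}(\R^{n})$ to $\dot W^{1,p}_{0}(\R^{n})$: since $u,v \in \dot W^{1,p}_{0}(\R^{n})$, the fields $|\nabla u|^{p-2}\nabla u$ and $|\nabla v|^{p-2}\nabla v$ lie in $L^{p'}(\R^{n})$, so the left-hand sides of the equations extend by continuity (using density of $C_{0}^{\infty}$ in $\dot W^{1,p}_{0}$), and since $\mu,\omega \in W^{-1,p'}(\R^{n}) = [\dot W^{1,p}_{0}(\R^{n})]^{*}$ the right-hand side functionals extend as well. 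Thus for every $\varphi \in \dot W^{1,p}_{0}(\R^{n})$,
\[
\int_{\R^{n}} |\nabla u|^{p-2}\nabla u \cdot \nabla \varphi \, dx = \langle \mu, \varphi\rangle, \qquad \int_{\R^{n}} |\nabla v|^{p-2}\nabla v \cdot \nabla \varphi \, dx = \langle \omega, \varphi\rangle .
\]
Note that $w = (u-v)_{+}$ is a nonnegative element of $\dot W^{1,p}_{0}(\R^{n})$ with $\nabla w = \chi_{\{u>v\}}(\nabla u - \nabla v)$ a.e.\ (the standard Sobolev chain rule), and that for a nonnegative finite-energy measure $\mu$ one has $\langle \mu, w\rangle = \int_{\R^{n}} w\, d\mu$ for a quasicontinuous representative of $w$, since such $\mu$ does not charge sets of $p$-capacity zero.

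Next I would subtract the two identities with $\varphi = w$, which gives
\[
\int_{\{u>v\}} \bigl( |\nabla u|^{p-2}\nabla u - |\nabla v|^{p-2}\nabla v \bigr)\cdot(\nabla u - \nabla v)\, dx = \int_{\R^{n}} (u-v)_{+}\, d(\mu - \omega).
\]
Because $\mu \leq \omega$ and $(u-v)_{+} \geq 0$, the right-hand side is $\leq 0$; by the strict monotonicity inequality $\bigl(|\xi|^{p-2}\xi - |\eta|^{p-2}\eta\bigr)\cdot(\xi-\eta) \geq 0$ for $p>1$, with equality only when $\xi = \eta$, the left-hand side is $\geq 0$. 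Hence both sides vanish and $\nabla u = \nabla v$ a.e.\ on $\{u>v\}$, i.e.\ $\nabla (u-v)_{+} = 0$ a.e.\ on $\R^{n}$. Since a function in $\dot W^{1,p}_{0}(\R^{n})$ with vanishing gradient is identically $0$ (by the Sobolev inequality $\|w\|_{L^{p^{*}}(\R^{n})} \leq C\|\nabla w\|_{L^{p}(\R^{n})}$, recalling $1<p<n$), we conclude $(u-v)_{+} = 0$ a.e., hence $u \leq v$ a.e., and passing to quasicontinuous representatives, $u \leq v$ q.e.

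I expect the only genuinely delicate point to be the justification that $w = (u-v)_{+}$ is an admissible test function and that $\langle \mu, w\rangle = \int_{\R^{n}} w\, d\mu$ against the quasicontinuous representative of $w$; this is precisely where the hypothesis that $\mu$ and $\omega$ are finite-energy measures enters, relying on the standard facts that such measures lie in the dual $[\dot W^{1,p}_{0}(\R^{n})]^{*}$ and do not charge $p$-polar sets. Everything else is the routine $(u-v)_{+}$ comparison argument together with strict monotonicity of the $p$-Laplacian.
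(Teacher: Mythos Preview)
Your argument is correct and is the standard proof of the weak comparison principle for the $p$-Laplacian in energy space. Note, however, that the paper itself does not supply a proof of this lemma: it is stated with attribution to \cite{CV14a} (see the sentence ``The next three lemmas are discussed in \cite{CV14a}\ldots''), so there is no in-paper proof to compare against. Your approach---testing with $(u-v)_{+}\in\dot W^{1,p}_{0}(\R^n)$, invoking the Brezis--Browder identification $\langle\mu,w\rangle=\int w\,d\mu$ for quasicontinuous $w$ (Theorem~\ref{representative_thm} in the paper), and using strict monotonicity of $\xi\mapsto|\xi|^{p-2}\xi$ together with the Sobolev inequality to kill $(u-v)_{+}$---is exactly the argument one finds in \cite{CV14a} and in standard references. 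The only cosmetic addition worth making explicit is that once $(u-v)_{+}=0$ a.e., the passage to ``$u\le v$ q.e.'' uses the fact that a quasicontinuous function which vanishes a.e.\ vanishes q.e.; since $u$ and $v$ may be taken quasicontinuous, so is $(u-v)_{+}$, and the conclusion follows.
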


The following theorem is due to Brezis and Browder \cite{BrB79} (cf. \cite[Theorem 2.39]{MZ97}).

\begin{Thm} \label{representative_thm} 
Let $1<p<n$ and $\mu \in W^{-1,p'}(\Omega) \cap \mathcal{M}^{+}(\Omega)$. Then for any $u \in \dot{W}_{0}^{1,p}(\Omega)$ we have  $u \in L^{1}(\Omega, d\mu)$ and 
\[
\langle \mu, u\rangle = \int_{\Omega} u \;d\mu,
\]
for a quasicontinuous representative of $u$.
\end{Thm}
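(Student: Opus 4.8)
The plan is to first observe that $\mu$, being in $W^{-1,p'}(\Omega)$, puts no mass on sets of $p$-capacity zero, so that the integral of the quasicontinuous representative is unambiguous. Indeed, if $E\subset\Omega$ is compact with $\mathrm{cap}_p(E)=0$, pick $\varphi_k\in C_0^\infty(\Omega)$ with $\varphi_k\geq 1$ on $E$ and $\|\nabla\varphi_k\|_{L^p(\Omega)}\to 0$; replacing $\varphi_k$ by $\max(\varphi_k,0)$ keeps these properties, and then $\mu(E)\leq\int_\Omega\varphi_k\,d\mu=\langle\mu,\varphi_k\rangle\leq\|\mu\|_{W^{-1,p'}(\Omega)}\|\nabla\varphi_k\|_{L^p(\Omega)}\to 0$; inner regularity of $\mu$ extends this to all Borel sets of zero capacity. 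Hence $\int_\Omega u\,d\mu$ is well defined for the quasicontinuous representative, and since $u=u^+-u^-$ with $u^\pm\in\dot{W}^{1,p}_0(\Omega)$ nonnegative and quasicontinuous and $\langle\mu,\cdot\rangle$ linear on $\dot{W}^{1,p}_0(\Omega)$, it suffices to treat $u\geq 0$.

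So let $u\geq 0$, $u\in\dot{W}^{1,p}_0(\Omega)$, and take $\varphi_k\in C_0^\infty(\Omega)$ with $\nabla\varphi_k\to\nabla u$ in $L^p(\Omega)$; passing to $\max(\varphi_k,0)$ we may assume $\varphi_k\geq 0$, and passing to a subsequence, $\varphi_k\to\tilde u$ q.e., hence $\mu$-a.e. Then $\langle\mu,\varphi_k\rangle\to\langle\mu,u\rangle$ by continuity of $\mu$ on $\dot{W}^{1,p}_0(\Omega)$, while Fatou's lemma gives
\[
\int_\Omega u\,d\mu\leq\liminf_{k\to\infty}\int_\Omega\varphi_k\,d\mu=\liminf_{k\to\infty}\langle\mu,\varphi_k\rangle=\langle\mu,u\rangle<\infty.
\]
This already shows $u\in L^1(\Omega,d\mu)$ and gives one half of the identity.

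For the reverse inequality I would reduce to $u$ bounded with compact support. Writing $u_j:=\min(u,j)$ and choosing an increasing sequence $\eta_j\in C_0^\infty(\Omega)$ with $0\leq\eta_j\leq 1$, $\eta_j\uparrow 1$, the functions $v_j:=\eta_j u_j$ are nonnegative, bounded, compactly supported, belong to $\dot{W}^{1,p}_0(\Omega)$, and satisfy $v_j\uparrow u$ q.e.; choosing the $\eta_j$ adapted to the Sobolev embedding $\dot{W}^{1,p}_0(\Omega)\hookrightarrow L^{\frac{np}{n-p}}(\Omega)$ one checks $v_j\to u$ in the seminorm of $\dot{W}^{1,p}_0(\Omega)$ (the term $u_j\nabla\eta_j$ tends to $0$ in $L^p$ precisely because of this embedding). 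For a fixed $v\geq 0$ bounded by $M$ and supported in a compact $K\subset\Omega$, one may choose the approximants $\varphi_k\in C_0^\infty(\Omega)$ so that in addition $0\leq\varphi_k\leq M$ and $\mathrm{supp}\,\varphi_k\subset K'$ for a fixed compact $K'\subset\Omega$ (truncate and multiply by a fixed cutoff equal to $1$ near $K$); since $|\varphi_k|\leq M\mathbf{1}_{K'}\in L^1(\Omega,d\mu)$ by local finiteness of $\mu$ and $\varphi_k\to\tilde v$ $\mu$-a.e., dominated convergence yields $\int_\Omega\varphi_k\,d\mu\to\int_\Omega v\,d\mu$, whence $\langle\mu,v\rangle=\lim_k\langle\mu,\varphi_k\rangle=\lim_k\int_\Omega\varphi_k\,d\mu=\int_\Omega v\,d\mu$. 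Applying this to $v=v_j$ and letting $j\to\infty$, using $\langle\mu,v_j\rangle\to\langle\mu,u\rangle$ (seminorm convergence) and $\int_\Omega v_j\,d\mu\uparrow\int_\Omega u\,d\mu$ (monotone convergence, since $v_j\uparrow u$ $\mu$-a.e.), gives $\langle\mu,u\rangle=\int_\Omega u\,d\mu$ for all $u\geq 0$, and the general case follows by splitting into positive and negative parts. Alternatively, one may invoke the Brezis--Browder theorem \cite{BrB79}, \cite[Theorem 2.39]{MZ97} for compactly supported Sobolev functions and carry out only the exhaustion/truncation limit.

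The delicate point is the reverse inequality $\langle\mu,u\rangle\leq\int_\Omega u\,d\mu$: Fatou's lemma is one-sided, so one must engineer approximants that are uniformly bounded and supported in a common compact set in order to apply dominated convergence against a dominating function in $L^1(d\mu)$. Arranging this inside the homogeneous space $\dot{W}^{1,p}_0(\Omega)$, where constants are not admissible and only the gradient is controlled, is what forces the truncation/cutoff bookkeeping and the use of the Sobolev embedding; everything else is routine.
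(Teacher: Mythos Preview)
The paper does not supply a proof of this statement: it is quoted as the Brezis--Browder theorem, with a reference to \cite{BrB79} and \cite[Theorem~2.39]{MZ97}, and is used as a black box throughout. So there is no ``paper's own proof'' to compare against.

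Your proposal is a correct self-contained argument along the standard Brezis--Browder lines: absolute continuity of $\mu$ with respect to $p$-capacity, the Fatou inequality $\int_\Omega u\,d\mu\le\langle\mu,u\rangle$ for $u\ge 0$, and then the reverse inequality via truncation, cutoff, and dominated convergence. Two small points worth tightening: (i) after replacing $\varphi_k$ by $\max(\varphi_k,0)$ or truncating, the functions are no longer in $C_0^\infty$ but only Lipschitz with compact support; this is harmless since the pairing $\langle\mu,\cdot\rangle$ extends continuously to $\dot{W}^{1,p}_0$ and the Lebesgue integral against $d\mu$ still makes sense, but you should say so explicitly to avoid the appearance of circularity. (ii) For the bounded compactly supported $v$, mollification is cleaner than your truncate-then-cutoff scheme: a standard mollifier gives $v_\epsilon\in C_0^\infty$ with $\|v_\epsilon\|_\infty\le\|v\|_\infty$, support in a fixed neighbourhood of $\mathrm{supp}\,v$, and $v_\epsilon\to v$ in $W^{1,p}$ and q.e.\ along a subsequence, so dominated convergence applies directly. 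Your final remark, that one could simply invoke \cite{BrB79} or \cite[Theorem~2.39]{MZ97} for compactly supported Sobolev functions and then pass to the limit, is exactly how the paper treats the matter.
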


We shall use the following facts, which are discussed in \cite[Secs. 2.1 - 2.2]{MZ97}.

\begin{Rem}\label{Rem0}
Let $1<p<n$ and $\omega \in W^{-1,p'}(\mathbb{R}^n) \cap \mathcal{M}^{+}(\mathbb{R}^n)$. There exists a unique $p$-superharmonic solution $u \in \dot{W}^{1,p}_{0}(\R^n)$ to the equation
\[
-\Delta_{p}u = \omega  \quad \text{in} \;\; \mathbb{R}^n
\]
in the distributional sense. Moreover, $u \in L^{1}(\mathbb{R}^n, d\omega)$ and 
\[
\langle \omega, u\rangle = \int_{\mathbb{R}^n} u \;d\omega = \Vert u \Vert_{\dot{W}_{0}^{1,p}(\mathbb{R}^n)}^{p} = \Vert \omega \Vert_{W^{-1,p'} (\mathbb{R}^n)}^{p'} 
=\Vert I_{1}\omega \Vert_{L^{p'}(\mathbb{R}^n)}^{p'} = \mathcal{E}_{1,p}(\omega)
\]
for a quasicontinuous representative of $u$.
\end{Rem}

We will need the next lemma which shows that if there exists a nontrivial supersolution $u \in L^{q}_{loc}(\R^n, d\sigma)$ to the integral equation
\begin{equation}\label{int_eq_alpha_p_homo}
u = \W_{\alpha,p}(u^{q}d\sigma) \quad d\sigma\text{-}a.e.,
\end{equation}
then $\sigma$ must be absolutely continuous with respect to $\text{cap}_{\alpha, p}(\cdot)$.

\begin{Lem} [\cite{CV14b}]\label{lemma_cap}
Let $1<p<\infty$, $0<\alpha<\frac{n}{p}$ and $0<q<p-1$ and $\sigma \in \mathcal{M}^{+}(\mathbb{R}^n)$. Suppose there exists a nontrivial supersolution 
$u \in L^{q}_{loc}(\R^n, d\sigma)$ to \eqref{int_eq_alpha_p_homo}. Then there exists a positive constant $c$ such that
\[
\sigma(E) \leq c \left[ \textnormal{cap}_{\alpha, p}(E) \right]^{\frac{q}{p-1}} 
\left( \int_{E} u^q \;d\sigma  \right)^{\frac{p-1-q}{p-1}}
\]
for all compact sets $E \subset \R^n$. 
\end{Lem}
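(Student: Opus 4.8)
The plan is to test the supersolution inequality against a carefully chosen test function adapted to a compact set $E$, and then run a capacitary extremal argument. Fix a compact set $E \subset \R^n$ and let $h$ be an admissible function for $\text{cap}_{\alpha,p}(E)$, i.e. $h = \I_\alpha g$ with $g \geq 0$ a.e., $g \in L^p(\R^n)$, and $h \geq 1$ on $E$; eventually one takes the infimum over such $g$. The first step is to integrate the defining inequality $u \geq \W_{\alpha,p}(u^q d\sigma)$ against $u^q\,d\sigma$ restricted to $E$, or more efficiently to pair $\W_{\alpha,p}(u^q d\sigma)$ with the measure $u^q \chi_E\, d\sigma$ and use the fundamental Wolff inequality \eqref{Wolff's ineq} in the localized form together with the duality between Wolff potentials and Riesz energies. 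Concretely, one wants to bound $\sigma(E) \leq \int_E (\W_{\alpha,p}(u^q d\sigma))^{?}\cdots$; since $u^q \in L^1(E,d\sigma)$ is not directly available, the cleaner route is: on $E$ we have $1 \leq h$, so $\sigma(E) \leq \int_E h\, d\sigma$, and then split $h = \I_\alpha g$ and estimate $\int \I_\alpha g\, d\sigma = \int g\, \I_\alpha \sigma\, dx$ by Hölder — but this brings in $\I_\alpha\sigma$, not the solution.

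A better approach, and the one I would actually carry out, is the following dyadic/Wolff argument, which is how Lemma \ref{lemma_cap} is proved in \cite{CV14b}. Since $u \geq \W_{\alpha,p}(u^q d\sigma)$ $d\sigma$-a.e., for $d\sigma$-a.e. $x \in E$ and every $r>0$ we have
\[
u(x) \geq \left[\frac{(u^q d\sigma)(B(x,r))}{r^{n-\alpha p}}\right]^{\frac{1}{p-1}},
\]
hence $(u^q d\sigma)(B(x,r)) \leq u(x)^{p-1} r^{n-\alpha p}$. Now one uses a known capacitary estimate: for any measure $\nu$ with $\nu(B(x,r)) \leq M r^{n-\alpha p}$ for all balls, one has $\nu(E) \leq C\, M^{?}$ ... — actually the sharp statement needed is the \emph{strong-type/capacitary} bound controlling $\nu(E)$ by $[\text{cap}_{\alpha,p}(E)]^{\theta}$ when $d\nu = u^q\,d\sigma$ and $u$ satisfies the above growth bound. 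The second step is therefore to invoke (or reprove) the fact that the condition ``$(u^q d\sigma)(B(x,r)) \le u(x)^{p-1} r^{n-\alpha p}$ for $\sigma$-a.e.\ $x$'' yields, via Wolff's inequality applied to $u^q d\sigma|_E$ and the trace inequality characterization \eqref{trace_ineq}, the energy bound
\[
\int_E \W_{\alpha,p}(u^q\chi_E\, d\sigma)\, u^q\, d\sigma \;\leq\; C \int_E u^q\, u^q\, d\sigma \;\cdots
\]
which is not quite it either; the actual mechanism is that $u^q\,\chi_E\,d\sigma \in W^{-\alpha,p'}$ with controlled norm, and then one applies the elementary capacity inequality $\nu(E) \le \text{cap}_{\alpha,p}(E)^{1/p} \,\|\nu\|_{W^{-\alpha,p'}}^{1/?}$. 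I would present it as: test $\nu := u^q\chi_E\,d\sigma$ against an admissible $h=\I_\alpha g \ge 1$ on $E$, getting $\nu(E) \le \int h\, d\nu = \int g\,\I_\alpha\nu\, dx \le \|g\|_{L^p}\,\|\I_\alpha\nu\|_{L^{p'}}$, and then $\|\I_\alpha\nu\|_{L^{p'}}^{p'} = \mathcal E_{\alpha,p}(\nu) \approx \int \W_{\alpha,p}\nu\, d\nu \le \int_E u^{q(1)}\cdots$. Using $\W_{\alpha,p}\nu \le \W_{\alpha,p}(u^q d\sigma) \le u$ pointwise $d\sigma$-a.e.\ on $E$, this last integral is $\le \int_E u\cdot u^q\, d\sigma = \int_E u^{1+q}\, d\sigma$, and one relates $\int_E u^{1+q}\,d\sigma$ back to $\int_E u^q\, d\sigma$ using again the pointwise lower bound on $u$ — or more carefully, one keeps $\mathcal E_{\alpha,p}(\nu)^{1/p'} = (\int_E \W_{\alpha,p}\nu\, d\nu)^{1/p'} \le (\int_E u^{1+q} d\sigma)^{1/p'}$ and then iterates/bootstraps.

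To get the exact exponents $\frac{q}{p-1}$ and $\frac{p-1-q}{p-1}$ in the statement, the key algebraic step is to avoid the term $\int_E u^{1+q}\,d\sigma$ and instead use the self-improving structure directly: from $\nu(E) = (u^q d\sigma)(E) \le \|g\|_{L^p}\, \mathcal E_{\alpha,p}(\nu)^{1/p'}$ and $\mathcal E_{\alpha,p}(\nu) \approx \int_E \W_{\alpha,p}(u^q d\sigma)\, u^q\, d\sigma \le \int_E u\cdot u^q\, d\sigma$, one still needs to handle $\int_E u^{1+q} d\sigma$. Here I would apply Hölder on $E$ with exponents $\frac{1+q}{q}$ and $1+q$: $\int_E u^{1+q} d\sigma \le \Big(\int_E u^q d\sigma\Big)^{\cdots}$ — this does not close. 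The cleaner and correct device is to raise the pointwise Wolff bound $u \ge \W_{\alpha,p}(u^q d\sigma)$ to the power matching Theorem \ref{lower_ptwise_est}: one does not need that theorem's full conclusion, only the $d\sigma$-a.e.\ inequality $(u^q d\sigma)(B(x,r)) \le u(x)^{p-1} r^{n-\alpha p}$, which after integrating $dr/r$ and raising to a power shows $\int_E u^{1+q}\,d\sigma \le \big(\int_E u^q d\sigma\big)$ times a capacity factor — precisely, one shows $\mathcal E_{\alpha,p}(u^q\chi_E d\sigma) \le C\, \text{cap}_{\alpha,p}(E)^{\frac{p-1-q}{q}}\,\big(\int_E u^q d\sigma\big)^{\cdots}$ and then combines. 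I would organize the final computation by writing $A := \sigma(E)$, $B := \int_E u^q\, d\sigma$, $K := \text{cap}_{\alpha,p}(E)$, deriving two inequalities relating $A,B,K$ from (i) admissibility $A \le \int_E h\,d\sigma$ with the trace inequality \eqref{trace_ineq} for exponent pair $(p, q)$, and (ii) the energy bound via Wolff, and then eliminating $B$-dependence to land on $A \le c\, K^{q/(p-1)} B^{(p-1-q)/(p-1)}$. \textbf{The main obstacle} will be bookkeeping the exponents cleanly so that the self-improving (Wolff) step produces exactly the power $\frac{q}{p-1}$ on the capacity and $\frac{p-1-q}{p-1}$ on $\int_E u^q\,d\sigma$; this is where the ``sub-natural'' condition $0 < q < p-1$ enters essentially (it guarantees $\frac{p-1-q}{p-1} \in (0,1)$ so the elimination of $B$ is legitimate), and where a wrong grouping in Hölder's inequality silently gives a weaker estimate. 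A secondary technical point is justifying that $u^q\chi_E\,d\sigma$ has finite energy a priori — which one gets because $\int_E \W_{\alpha,p}(u^q d\sigma)\, u^q d\sigma \le \int_E u^{1+q}\, d\sigma < \infty$ once we know $u \in L^q_{loc}(\R^n,d\sigma)$ and $u$ is bounded on the relevant region, or by a standard truncation-and-monotone-convergence argument on $T_k(u)$.
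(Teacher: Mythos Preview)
The paper does not prove this lemma; it is quoted from \cite{CV14b} without proof, so there is no ``paper's own proof'' to compare against. What can be assessed is whether your proposal actually constitutes a proof.

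It does not. What you have written is an exploratory sketch with several abandoned starts (``which is not quite it either'', ``this does not close''), ending in a plan that is stated but never executed. The concrete gap is this: your main line of attack applies the capacitary pairing to $\nu = u^{q}\chi_{E}\,d\sigma$, obtaining
\[
\int_{E} u^{q}\,d\sigma \;\le\; \text{cap}_{\alpha,p}(E)^{1/p}\,\mathcal{E}_{\alpha,p}(\nu)^{1/p'}
\quad\text{and}\quad
\mathcal{E}_{\alpha,p}(\nu) \;\lesssim\; \int_{E} u^{1+q}\,d\sigma,
\]
but this bounds $\int_{E}u^{q}\,d\sigma$, not $\sigma(E)$, and leaves the unwanted quantity $\int_{E}u^{1+q}\,d\sigma$ on the right. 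Your proposed ``elimination of $B$'' is asserted, not carried out, and there is no mechanism in your write-up that converts a control on $\int_E u^{1+q}\,d\sigma$ into the stated estimate with $\sigma(E)$ on the left and only $\int_E u^{q}\,d\sigma$ on the right. The truncation remark at the end addresses a side issue (finiteness of the localized energy) but not this structural one.

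What is missing is a step that produces $\sigma(E)$ directly with the correct exponents. One clean route is to apply the capacitary pairing to $\sigma_{E}=\chi_{E}\sigma$ rather than to $u^{q}\chi_{E}\,d\sigma$, so that Wolff's inequality gives $\mathcal{E}_{\alpha,p}(\sigma_{E})\approx\int_{E}\W_{\alpha,p}\sigma_{E}\,d\sigma$, and then to control $\W_{\alpha,p}\sigma_{E}\le \W_{\alpha,p}\sigma$ pointwise in terms of $u$ via the lower bound of Theorem~\ref{lower_ptwise_est}, namely $u\ge c(\W_{\alpha,p}\sigma)^{(p-1)/(p-1-q)}$; the H\"older split $\sigma(E)=\int_{E}u^{-q(p-1-q)/(p-1)}\cdot u^{q(p-1-q)/(p-1)}\,d\sigma$ with exponents $\tfrac{p-1}{q}$ and $\tfrac{p-1}{p-1-q}$ then isolates exactly the factors $\text{cap}_{\alpha,p}(E)^{q/(p-1)}$ and $\big(\int_{E}u^{q}\,d\sigma\big)^{(p-1-q)/(p-1)}$. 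None of this is present in your proposal, and without it the argument does not close.
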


Consequently, if \eqref{homo_eq_p-lapacain} has a nontrivial $p$-superharmonic supersolution then $\sigma$ is absolutely continuous with respect to 
$\text{cap}_{p}(\cdot)$.
\section{Existence of a Positive Finite Energy Solution to Equation \eqref{main_eq_p-lapacain}}

In this section, we establish necessary and sufficient conditions for the existence of a positive finite energy solution to equation \eqref{main_eq_p-lapacain}. Minimality of such a solution is demonstrated as well. In the case $p \geq n$, it follows immediately from the result in \cite{CV14a} that there is only a trivial supersolution to \eqref{main_eq_p-lapacain}. Henceforth, we assume $1<p<n$. 

Our first theorem is stated in the general framework of nonlinear integral equations involving Wolff potentials, 
\begin{equation}\label{int_eq_general}
u = {\W}_{\alpha,p} (u^{q}d\sigma) + {\W}_{\alpha,p}\mu \quad \text{in} \;\; \mathbb{R}^n,
\end{equation}
where $1<p<n$, $0<q<p-1$, $0<\alpha<\frac{n}{p}$ and $\sigma, \mu \in \mathcal{M}^{+}(\mathbb{R}^n)$. This theorem will be used to construct positive finite energy solutions to both equations \eqref{main_eq_p-lapacain} and \eqref{main_eq_frac_laplacian} in the cases $\alpha = 1$ and $p=2$, respectively.

\begin{Thm}\label{thm_existence}
Let $1<p<n$, $0<q<p-1$, $0<\alpha<\frac{n}{p}$ and $\sigma, \mu \in \mathcal{M}^{+}(\mathbb{R}^n)$. Suppose that the following conditions hold:
 \begin{equation}\label{condition_sigma_alpha_p}
 {\W}_{\alpha, p}\sigma \in L^{\frac{(1+q)(p-1)}{p-1-q}}(\R^n, d\sigma), 
 \end{equation}
and 
\begin{equation}\label{condition_f_alpha_p}
{\W}_{\alpha, p}\mu \in L^{1+q}(\R^n, d\sigma).
 \end{equation}
Then there exists a positive solution $u \in L^{1+q}(\mathbb{R}^n, d\sigma)$ to integral equation
\eqref{int_eq_general}.
\end{Thm}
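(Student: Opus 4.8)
The plan is to construct the solution $u$ as the increasing limit of an iteration scheme. Define $u_0 = \W_{\alpha,p}\mu$ and, inductively,
\[
u_{j+1} = \W_{\alpha,p}(u_j^q \, d\sigma) + \W_{\alpha,p}\mu, \quad j \geq 0.
\]
Since $u_0 \geq 0$ and the operator $v \mapsto \W_{\alpha,p}(v^q\,d\sigma) + \W_{\alpha,p}\mu$ is monotone in $v$, one checks $u_1 \geq u_0$ and hence by induction $u_{j+1} \geq u_j$, so $\{u_j\}$ increases pointwise to a limit $u$ (possibly $+\infty$ a priori). By monotone convergence applied inside the Wolff potential, $u$ will satisfy \eqref{int_eq_general} as soon as we know $u$ is finite $d\sigma$-a.e., in fact $u \in L^{1+q}(\R^n, d\sigma)$.

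The heart of the matter is therefore a uniform bound $\|u_j\|_{L^{1+q}(\R^n,d\sigma)} \leq M$ independent of $j$. First I would record the key estimate: if $v \in L^{1+q}(\R^n, d\sigma)$, then by the trace inequality \eqref{trace_ineq} (in the form equivalent to \eqref{condition_sigma_alpha_p}, with exponents $q \rightsquigarrow 1+q$, $p \rightsquigarrow (1+q)(p-1)/q$ suitably; more precisely one uses the two-weight testing characterization behind \eqref{condition_sigma_alpha_p}) one gets
\[
\big\| \W_{\alpha,p}(v^q\,d\sigma) \big\|_{L^{1+q}(\R^n,d\sigma)} \leq C \, \|v\|_{L^{1+q}(\R^n,d\sigma)}^{\frac{q}{p-1}}.
\]
Granting this, set $a_j = \|u_j\|_{L^{1+q}(\R^n,d\sigma)}$ and $b = \|\W_{\alpha,p}\mu\|_{L^{1+q}(\R^n,d\sigma)}$, which is finite by \eqref{condition_f_alpha_p}. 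The triangle inequality in $L^{1+q}(\R^n,d\sigma)$ gives $a_{j+1} \leq C a_j^{q/(p-1)} + b$. Since $q/(p-1) < 1$, the map $t \mapsto C t^{q/(p-1)} + b$ has a finite fixed point $M_0$, and is below the diagonal for $t > M_0$; an elementary induction then shows $a_j \leq M := \max(a_0, M_0)$ for all $j$. This is the step I expect to be the main obstacle — not the iteration bookkeeping, but establishing the weighted estimate for $\W_{\alpha,p}(v^q\,d\sigma)$ on $L^{1+q}(d\sigma)$ with the correct constant, which requires invoking the COV-type characterization of \eqref{condition_sigma_alpha_p} carefully (possibly via the dyadic/discretized Wolff potential and Wolff's inequality \eqref{Wolff's ineq}).

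With the uniform bound in hand, Fatou's lemma gives $u \in L^{1+q}(\R^n,d\sigma)$ with $\|u\|_{L^{1+q}(\R^n,d\sigma)} \leq M$; in particular $u < \infty$ $d\sigma$-a.e. Passing to the limit in the recursion using the monotone convergence theorem (valid since $u_j^q \uparrow u^q$ and $\W_{\alpha,p}$ of an increasing sequence of measures increases to the Wolff potential of the limit measure) yields that $u$ solves \eqref{int_eq_general} $d\sigma$-a.e. Finally, positivity of $u$ is immediate since $u \geq u_0 = \W_{\alpha,p}\mu > 0$ everywhere (as $\mu \not\equiv 0$). This completes the construction. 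I would remark that the solution produced this way is the \emph{minimal} positive solution, since any positive solution $w$ of \eqref{int_eq_general} dominates $u_0$ and hence, by monotonicity and induction, dominates every $u_j$, so $w \geq u$; this minimality will be exploited later in the uniqueness arguments.
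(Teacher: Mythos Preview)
Your proposal is correct and follows essentially the same approach as the paper: the same iteration $u_0 = \W_{\alpha,p}\mu$, $u_{j+1} = \W_{\alpha,p}(u_j^q\,d\sigma) + \W_{\alpha,p}\mu$, the same monotonicity, and the same key weighted estimate $\|\W_{\alpha,p}(v^q\,d\sigma)\|_{L^{1+q}(d\sigma)} \le C\|v\|_{L^{1+q}(d\sigma)}^{q/(p-1)}$, which the paper records as Lemma~\ref{bdd_operator} (quoted from \cite{CV14a}) rather than deriving via trace inequalities. The only cosmetic difference is in extracting the uniform bound: the paper uses $u_j \le u_{j+1}$ to get the self-referential inequality $\|u_{j+1}\| \le c\|u_{j+1}\|^{q/(p-1)} + b$ and then applies Young's inequality, whereas your fixed-point argument on the recursion $a_{j+1} \le C a_j^{q/(p-1)} + b$ works equally well.
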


The following result will be used in the proof of Theorem 
\ref{thm_existence} (see \cite[Lemma 3.3]{CV14a}, or \cite{COV06} in more generality).

\begin{Lem} [\cite{CV14a}]\label{bdd_operator} 
Let $1<p<\infty$, $0<q<p-1$, $0<\alpha<\frac{n}{p}$ and $\sigma \in \mathcal{M}^{+}(\mathbb{R}^n)$. Suppose \eqref{condition_sigma_alpha_p} holds. Then the nonlinear integral operator $T$ defined by 
\[
T(g) := \left( \W_{\alpha,p}(|g| d\sigma) \right)^{p-1}, \quad  g \in L^{\frac{1+q}{q}}(\mathbb{R}^n, d\sigma)
\]
is bounded from $L^{\frac{1+q}{q}}(\mathbb{R}^n, d\sigma)$ to $L^{\frac{1+q}{p-1}}(\mathbb{R}^n, d\sigma)$.
\end{Lem}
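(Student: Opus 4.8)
The plan is to unwind the operator-norm bound into a weighted integral inequality for the Wolff potential, to identify that inequality with one of the known equivalent forms of the hypothesis \eqref{condition_sigma_alpha_p}, and then to carry out the genuinely nonlinear passage from a linear trace inequality to the Wolff estimate. Since $T(g)$ depends only on $|g|$ and $T(tg)=tT(g)$ for $t>0$, I may assume $g\ge 0$ and, by homogeneity, normalize $\|g\|_{L^{\frac{1+q}{q}}(\R^n,d\sigma)}=1$. Writing out the target norm and using $(p-1)\cdot\frac{1+q}{p-1}=1+q$, the asserted boundedness is equivalent to the weighted inequality
\[
\int_{\R^n}\big(\W_{\alpha,p}(g\,d\sigma)\big)^{1+q}\,d\sigma \;\le\; C\Big(\int_{\R^n} g^{\frac{1+q}{q}}\,d\sigma\Big)^{\frac{q}{p-1}},\qquad g\ge 0.
\]
This first step is routine exponent bookkeeping.

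Next I would bring in the hypothesis through its linear reformulations. By the Cascante--Ortega--Verbitsky equivalence recalled in the Preliminaries (taking the generic exponent in \eqref{trace_ineq} equal to $1+q$, which is admissible precisely because $q<p-1$ forces $1+q<p$), condition \eqref{condition_sigma_alpha_p} is equivalent to the trace inequality $\big(\int_{\R^n}|\I_\alpha h|^{1+q}\,d\sigma\big)^{\frac{1}{1+q}}\le C\|h\|_{L^p(\R^n)}$ for all $h\in L^p(\R^n)$. Since the Riesz kernel is symmetric, the adjoint of $\I_\alpha\colon L^p(\R^n)\to L^{1+q}(\R^n,d\sigma)$ is the map $f\mapsto \I_\alpha(f\,d\sigma)$, so this trace inequality dualizes to
\[
\big\|\I_\alpha(f\,d\sigma)\big\|_{L^{p'}(\R^n)}\;\le\; C\,\|f\|_{L^{\frac{1+q}{q}}(\R^n,d\sigma)},\qquad f\ge 0,
\]
in which the exponents $\tfrac{1+q}{q}=(1+q)'$ and $p'$ are exactly the source exponent of $T$ and the H\"older conjugate of $p$. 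Thus the hypothesis supplies a clean linear bound for the Riesz potential of $f\,d\sigma$.

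The heart of the argument is the passage from this linear estimate for $\I_\alpha$ to the nonlinear Wolff bound displayed in the first paragraph. Here I would replace $\W_{\alpha,p}$ by its dyadic model $\sum_{Q\ni x}\big(\nu(Q)/\ell(Q)^{n-\alpha p}\big)^{\frac{1}{p-1}}$ with $d\nu=g\,d\sigma$ (comparable to $\W_{\alpha,p}(g\,d\sigma)$ after the usual translation/averaging over dyadic lattices), and then reduce the $L^{1+q}(d\sigma)$ estimate for the $\tfrac{1}{p-1}$-homogeneous operator $g\mapsto\W_{\alpha,p}(g\,d\sigma)$ to a Sawyer-type testing condition over cubes, namely a bound of the shape $\int_Q\big(\W_{\alpha,p}(\sigma|_Q)\big)^{1+q}\,d\sigma\le C\,\sigma(Q)$. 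This testing condition follows from \eqref{condition_sigma_alpha_p} combined with Wolff's inequality \eqref{Wolff's ineq} and H\"older's inequality. I expect this nonlinear testing/duality step to be the main obstacle: ordinary $L^p$ duality does not apply directly to the nonlinear operator $\W_{\alpha,p}$, so the reduction to a pointwise testing condition must be done by the dyadic potential-theoretic machinery rather than by abstract duality.

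Finally, combining the testing estimate with the dyadic characterization yields the weighted inequality of the first paragraph, and undoing the homogeneity normalization restores the operator bound $\|T(g)\|_{L^{\frac{1+q}{p-1}}(d\sigma)}\le C\|g\|_{L^{\frac{1+q}{q}}(d\sigma)}$ for general $g$. Since the entire nonlinear step is precisely the content established in \cite{COV06} in greater generality, I would either invoke that result directly with the exponents matched as above, or reproduce the dyadic testing argument; the preceding reductions show that either route suffices.
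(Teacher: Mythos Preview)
The paper does not give its own proof of this lemma: it is stated as a citation result, with the sentence preceding it pointing to \cite[Lemma~3.3]{CV14a} and to \cite{COV06} in more generality. So there is no in-paper argument to compare against beyond that reference.

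Your sketch is consistent with what those references do. The exponent bookkeeping in your first display is correct: boundedness of $T$ from $L^{\frac{1+q}{q}}(d\sigma)$ to $L^{\frac{1+q}{p-1}}(d\sigma)$ is exactly the weighted Wolff inequality you wrote. You correctly identify the linear reformulation of \eqref{condition_sigma_alpha_p} via the trace inequality \eqref{trace_ineq} with exponent $1+q$, and its dual form for $\I_\alpha(f\,d\sigma)$. The substantive step, as you say, is the nonlinear passage from the Riesz trace bound to the $L^{1+q}(d\sigma)$ bound for $\W_{\alpha,p}(g\,d\sigma)$; this is precisely the content of \cite{COV06} (and, in the form used here, of \cite[Lemma~3.3]{CV14a}), carried out via dyadic Wolff potentials and a Carleson/testing reduction. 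Your proposed testing condition $\int_Q(\W_{\alpha,p}(\sigma|_Q))^{1+q}\,d\sigma\le C\,\sigma(Q)$ is of the right shape, though deriving it cleanly from \eqref{condition_sigma_alpha_p} and \eqref{Wolff's ineq} alone requires the localization machinery of those papers rather than a one-line H\"older argument. Since you end by invoking \cite{COV06} directly, the proposal is acceptable and matches the paper's treatment, which is simply to cite the result.
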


\begin{proof}[Proof of Theorem \ref{thm_existence}] Without loss of generality we may assume that 
$g \ge 0$, $g \in L^{\frac{1+q}{q}}(\mathbb{R}^n, d\sigma)$. 
Since \eqref{condition_sigma_alpha_p} holds, then by Lemma \ref{bdd_operator}, there exists a positive constant $c$ such that 
\begin{equation}\label{est1_thm_existence}
\left( \int_{\mathbb{R}^n} \big| {\W}_{\alpha, p} (g d\sigma) \big|^{1+q} \;d\sigma \right)^{\frac{1}{1+q}}
\leq c \left( \int_{\mathbb{R}^n} \vert g \vert^{\frac{1+q}{q}}\;d\sigma \right)^{\frac{q}{(1+q)(p-1)}}, 
\end{equation} 
where $c$ is a positive constant that does not depend on $g \in L^{\frac{1+q}{q}}(\mathbb{R}^n, d\sigma)$. We construct a sequence of functions $\lbrace  u_j \rbrace_{0}^{\infty}$ as follows. Set
\[
u_0 := {\W}_{\alpha, p}\mu \qquad \text{and} \qquad u_{j+1} := {\W}_{\alpha, p} (u^{q}_{j} d\sigma) + {\W}_{\alpha, p}\mu, \quad \;\; j \in \mathbb{N}_{0}.
\]
Observe that $u_0 > 0$ since $\mu \not\equiv 0$, and
\[
u_{1} = {\W}_{\alpha, p} (u^{q}_{0} d\sigma) + u_0 \geq u_0.
\]
Suppose $u_0 \leq u_1 \leq \ldots \leq u_j$ for some $j \in \N$. Then
\[
u_{j+1} 
= {\W}_{\alpha, p} (u^{q}_{j} d\sigma) + {\W}_{\alpha, p}\mu 
\geq {\W}_{\alpha, p} (u^{q}_{j-1} d\sigma) + {\W}_{\alpha, p}\mu
= u_{j}.
\]
Hence, by induction, $\lbrace  u_j\rbrace_{0}^{\infty}$ is a nondecreasing sequence of positive functions. Moreover, each $u_j \in  L^{1+q}(\mathbb{R}^n, d\sigma)$. To see this, notice that by assumption \eqref{condition_f_alpha_p}, we have 
\[
u_0 = \W_{\alpha, p}\mu \in L^{1+q}(\mathbb{R}^n, d\sigma).
\]
Suppose $u_0, \ldots, u_j \in L^{1+q}(\mathbb{R}^n, d\sigma)$ for some $j \in \N$. By Minkowski's inequality, 
\begin{equation}\label{est2_thm_existence}
\begin{split}
\Vert u_{j+1} \Vert_{L^{1+q}(\mathbb{R}^n,\; d\sigma)}
&= \big\Vert \W_{\alpha, p} (u^{q}_{j} d\sigma) + \W_{\alpha, p}\mu \big\Vert_{L^{1+q}(\mathbb{R}^n, \; d\sigma)} \\
&\leq \big\Vert \W_{\alpha, p} (u^{q}_{j} d\sigma) \big\Vert_{L^{1+q}(\mathbb{R}^n, \; d\sigma)} + \big\Vert \W_{\alpha, p}\mu  \big\Vert_{L^{1+q}(\mathbb{R}^n, \; d\sigma)}.
\end{split}
\end{equation}
The first term on the right-hand side of \eqref{est2_thm_existence} is estimated by applying \eqref{est1_thm_existence} with $g:=u^{q}_{j} \in 
L^{\frac{1+q}{q}}(\mathbb{R}^n, d\sigma)$. In fact,
\begin{equation}\label{est3_thm_existence}
\begin{split}
\big\Vert \W_{\alpha, p} (u^{q}_{j} d\sigma) \big\Vert_{L^{1+q}(\mathbb{R}^n, d\sigma)}
&\leq c \left( \int_{\mathbb{R}^n} u^{1+q}_{j}\;d\sigma \right)^{\frac{q}{(1+q)(p-1)}} \\
&\leq c \left( \int_{\mathbb{R}^n} u^{1+q}_{j+1}\;d\sigma \right)^{\frac{q}{(1+q)(p-1)}} \\
&= c \Vert u_{j+1}\Vert^{\frac{q}{p-1}}_{L^{1+q}(\mathbb{R}^n, d\sigma)}.
\end{split}
\end{equation}
Combining \eqref{est2_thm_existence} and \eqref{est3_thm_existence}, we arrive at 
\begin{equation} \label{est4_thm_existence}
\Vert u_{j+1} \Vert_{L^{1+q}(\mathbb{R}^n, d\sigma)}
\leq c \Vert u_{j+1}\Vert^{\frac{q}{p-1}}_{L^{1+q}(\mathbb{R}^n, d\sigma)} + \big\Vert \W_{\alpha, p}\mu  \big\Vert_{L^{1+q}(\mathbb{R}^n, d\sigma)}.
\end{equation}
We estimate the first term on the right-hand side of \eqref{est4_thm_existence} using Young's inequality,
\begin{equation}\label{est5_thm_existence}
c \Vert u_{j+1}\Vert^{\frac{q}{p-1}}_{L^{1+q}(\mathbb{R}^n, d\sigma)} 
\leq \frac{q}{p-1} \Vert u_{j+1}\Vert_{L^{1+q}(\mathbb{R}^n, d\sigma)} +  \frac{p-1-q}{p-1}c^{\frac{p-1}{p-1-q}}
\end{equation}
Hence, by \eqref{est4_thm_existence} and \eqref{est5_thm_existence}, we obtain
\begin{equation}\label{est6_thm_existence}
\Vert u_{j+1} \Vert_{L^{1+q}(\mathbb{R}^n, d\sigma)} 
\leq c^{\frac{p-1}{p-1-q}} + \frac{p-1}{p-1-q} \big\Vert \W_{\alpha, p}\mu
\big\Vert_{L^{1+q}(\mathbb{R}^n, d\sigma)} < +\infty.
\end{equation}
By induction, we have shown that each $u_j \in  L^{1+q}(\mathbb{R}^n, d\sigma)$. Finally, applying the Monotone Convergence Theorem to the sequence $\lbrace u_{j} \rbrace_{0}^{\infty}$, we see that the pointwise limit  
\[
u:=\lim_{j \rightarrow \infty} u_j
\] 
exists so that $u>0$, $u \in L^{1+q}(\mathbb{R}^n, d\sigma)$ and satisfies \eqref{int_eq_general}.
\end{proof}

\begin{Rem}\label{rem_existence}
The converse to Theorem \ref{thm_existence} is also true in a more general sense. In fact, if  $u \in L^{1+q}(\R^n, d\sigma)$, $u > 0 $ $d\sigma$-a.e.,  satisfies the equation 
\[
u = \W_{\alpha,p} (u^{q}d\sigma) + \W_{\alpha,p}\mu \quad d\sigma\text{-}a.e.,
\]
then obviously $u \in L^{q}_{loc}(\mathbb{R}^n, d\sigma)$ by H\"{o}lder's inequality, and 
\[
u \geq  \W_{\alpha, p} (u^{q}d\sigma) \quad d\sigma\text{-}a.e.
\] 
Applying Theorem \ref{lower_ptwise_est}, we obtain a lower pointwise estimate of  $u$, 
\[
u \geq c \left( \W_{\alpha, p}\sigma \right)^{\frac{p-1}{p-1-q}} \quad d\sigma\text{-}a.e.,
\]
where $c = c(\alpha, n, p, q) > 0$. This implies that
\eqref{condition_sigma_alpha_p} holds since $u \in L^{1+q}(\R^n, d\sigma)$. Similary, \eqref{condition_f_alpha_p} holds because $u \in L^{1+q}(\R^n, d\sigma)$ and 
\[
u \geq \W_{\alpha, p}\mu \qquad d\sigma\text{-}a.e.
\]
\end{Rem}

The next lemma is our main observation in this section. It gives us a relation 
between conditions \eqref{condition_sigma_alpha=1}, \eqref{condition_mu_alpha=1} and  \eqref{condition_mu_1+q}.

\begin{Lem} \label{relation_condition_p-laplacian}
Let $1<p<n$, $0<q<p-1$ and $\sigma, \mu \in \mathcal{M}^{+}(\mathbb{R}^n)$. Then conditions \eqref{condition_sigma_alpha=1} and \eqref{condition_mu_alpha=1} imply \eqref{condition_mu_1+q}.
\end{Lem}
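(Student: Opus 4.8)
The strategy is to bound $\W_{1,p}\mu$ pointwise by a constant multiple of the finite energy $p$-superharmonic solution $v$ of $-\Delta_p v=\mu$, and then to feed $v$ into the trace inequality equivalent to \eqref{condition_sigma_alpha=1}. Recall that by \cite{COV00} condition \eqref{condition_sigma_alpha=1} is equivalent to the trace inequality \eqref{trace_inq}. The first step is to upgrade \eqref{trace_inq} from $C_0^{\infty}(\R^n)$ to the whole of $\dot W^{1,p}_0(\R^n)$. Testing \eqref{trace_inq} against functions $\varphi\in C_0^{\infty}(\R^n)$ with $\varphi\ge 1$ on a compact set $E$, and passing to the infimum over such $\varphi$, gives $\sigma(E)\le C^{1+q}\,[\text{cap}_p(E)]^{\frac{1+q}{p}}$; in particular $\sigma$ is absolutely continuous with respect to $\text{cap}_p(\cdot)$. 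Hence, if $v\in\dot W^{1,p}_0(\R^n)$ and $\varphi_j\in C_0^{\infty}(\R^n)$ satisfy $\nabla\varphi_j\to\nabla v$ in $L^p(\R^n)$, then \eqref{trace_inq} shows that $(\varphi_j)$ is Cauchy in $L^{1+q}(\R^n,d\sigma)$, while a subsequence converges q.e., hence $\sigma$-a.e., to the quasicontinuous representative of $v$ (a standard fact, see \cite{MZ97}); letting $j\to\infty$ yields
\[
\|v\|_{L^{1+q}(\R^n, d\sigma)}\le C\,\|\nabla v\|_{L^p(\R^n)},\qquad v\in\dot W^{1,p}_0(\R^n),
\]
with $v$ understood to be its quasicontinuous representative.

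For the second step, \eqref{condition_mu_alpha=1} means precisely $\mu\in W^{-1,p'}(\R^n)\cap\mathcal{M}^{+}(\R^n)$, so Remark \ref{Rem0} furnishes a unique $p$-superharmonic solution $v\in\dot W^{1,p}_0(\R^n)$ of $-\Delta_p v=\mu$ in $\R^n$, with $\|\nabla v\|_{L^p(\R^n)}^{p}=\mathcal{E}_{1,p}(\mu)<+\infty$; moreover $v\ge 0$, since $\mu\ge 0$ (e.g.\ by the comparison principle, or by testing the equation against $\min(v,0)$). Because $1<p<n$, the Sobolev embedding $\dot W^{1,p}_0(\R^n)\hookrightarrow L^{p^{*}}(\R^n)$ with $p^{*}=\frac{np}{n-p}$ forces $\liminf_{|x|\to\infty}v(x)=0$: a positive $\liminf$ would bound $v$ below by a positive constant on a set $\{|x|>R\}$ of infinite Lebesgue measure, contradicting $v\in L^{p^{*}}(\R^n)$. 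Theorem \ref{pointwise_est_p-superharmonic} then gives the pointwise estimate $\W_{1,p}\mu\le K\,v$ in $\R^n$, with $K=K(n,p)\ge 1$.

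Combining the two steps,
\[
\big\|\W_{1,p}\mu\big\|_{L^{1+q}(\R^n, d\sigma)}\le K\,\|v\|_{L^{1+q}(\R^n, d\sigma)}\le KC\,\|\nabla v\|_{L^p(\R^n)}<+\infty,
\]
which is exactly \eqref{condition_mu_1+q}. The step I expect to be the main obstacle is the passage from \eqref{trace_inq} on test functions to the same inequality on $\dot W^{1,p}_0(\R^n)$ with quasicontinuous representatives, which is where the absolute continuity of $\sigma$ with respect to $\text{cap}_p(\cdot)$ is essential, together with the verification of the decay $\liminf_{|x|\to\infty}v(x)=0$ that legitimizes the use of the Wolff potential estimate in Theorem \ref{pointwise_est_p-superharmonic}.
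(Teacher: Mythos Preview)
Your argument is correct, but it follows a genuinely different route from the paper's proof. The paper stays entirely within linear potential theory: it invokes the Riesz-potential form of the trace inequality
\[
\big\Vert \I_{1}g \big\Vert_{L^{1+q}(\R^n, d\sigma)} \le c\,\Vert g \Vert_{L^{p}(\R^n)},\qquad g\in L^{p}(\R^n),
\]
applies it to $g=(\I_{1}\mu)^{1/(p-1)}\in L^{p}(\R^n)$ (using that $\mu\in W^{-1,p'}(\R^n)$ means $\I_{1}\mu\in L^{p'}(\R^n)$), and then uses the classical Havin--Maz'ya pointwise bound $\W_{1,p}\mu\le C\,\I_{1}\big((\I_{1}\mu)^{1/(p-1)}\big)$ to conclude. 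Your proof, by contrast, goes through the nonlinear theory: you build the finite-energy $p$-superharmonic solution $v$ of $-\Delta_{p}v=\mu$ via Remark~\ref{Rem0}, verify the decay hypothesis for Theorem~\ref{pointwise_est_p-superharmonic} (Kilpel\"ainen--Mal\'y), and then feed $v$ into the gradient form \eqref{trace_inq} of the trace inequality after extending it from $C_{0}^{\infty}$ to $\dot W^{1,p}_{0}$.

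The paper's argument is shorter and more self-contained, needing only elementary facts about Riesz potentials and the single pointwise inequality $\W_{1,p}\mu\le C\,\I_{1}(\I_{1}\mu)^{1/(p-1)}$. Your approach is heavier---it relies on the Kilpel\"ainen--Mal\'y estimate, the extension of the trace inequality to quasicontinuous representatives, and the Sobolev-embedding decay argument---but it has the merit of reusing exactly the $p$-Laplacian machinery (Remark~\ref{Rem0}, Theorem~\ref{pointwise_est_p-superharmonic}, Lemma~\ref{weak_comparison_principle}) that the paper sets up anyway for the existence proof in Theorem~\ref{thm_main}. Both are legitimate; the paper's is the more economical choice for this lemma in isolation.
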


\begin{proof}
As shown in \cite{COV00}, \eqref{condition_sigma_alpha=1}
holds if and only if there exists a positive constant $c$ such that
\begin{equation}\label{condition_COV00_4.1}
\big\Vert \I_{1}g \big\Vert_{L^{1+q}(\R^n, d\sigma)}
\leq c \Vert g \Vert_{L^{p}(\R^n)}, \quad \forall g \in L^{p}(\R^n).
\end{equation}
Since $\mu \in L^{-1,p'}(\R^n)$ then $\I_{1}\mu \in L^{p'}(\R^n)$. Substituting $g := \left( \I_{1}\mu \right)^{\frac{1}{p-1}}\in L^{p}(\R^n)$ into \eqref{condition_COV00_4.1} yields  
\[
\big\Vert \I_{1} \left( \I_{1}\mu \right)^{\frac{1}{p-1}} \big\Vert_{L^{1+q}(\R^n, d\sigma)}
\leq c \big\Vert \left( \I_{1}\mu \right)^{\frac{1}{p-1}} \big\Vert_{L^{p}(\R^n)} < +\infty.
\] 
Notice that $\W_{1,p}\mu$ is always pointwise smaller than $\I_{1}\left( \I_{1}\mu \right)^{\frac{1}{p-1}} $ (see, for example, \cite[Sec. 10.4.2]{Maz11}). More precisely, 
\[
\W_{1,p}\mu  \leq C \I_{1}\left( \I_{1}\mu \right)^{\frac{1}{p-1}},
\]
where $C$ is a constant which depends only on $p$. This yields \eqref{condition_mu_1+q}.
\end{proof}

The following lemma, in particular, gives necessary conditions for the existence of 
a positive finite energy solution to equation \eqref{main_eq_p-lapacain}.

\begin{Lem} \label{converse_existence_thm_inhomo} 
Let $1<p<n$, $0<q<p-1$ and $\sigma, \mu \in \mathcal{M}^{+}(\mathbb{R}^n)$.
Suppose there exists a nontrivial supersolution $u \in L_{loc}^{q}(\R^n, d\sigma) \cap \dot{W}_{0}^{1,p}(\R^n)$ to equation
\eqref{main_eq_p-lapacain}.
Then
\[
-\Delta_{p} u \in W^{-1,p'}(\R^n) \cap \mathcal{M}^{+}(\R^n)
\quad 
\text{and}
\quad
u \in L^{1+q}(\mathbb{R}^n, d\sigma)
\] 
for a quasicontinuous representative of $u$. Consequently, \eqref{condition_sigma_alpha=1} and \eqref{condition_mu_alpha=1} hold.
\end{Lem}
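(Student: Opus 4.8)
The plan is to bootstrap from the homogeneous case \eqref{homo_eq_p-lapacain}, which is already covered by Lemma~\ref{converse_existence_thm}, and then to recover the condition on $\mu$ by a short monotonicity argument. The key observation is that a nontrivial supersolution $u$ to \eqref{main_eq_p-lapacain} is, a fortiori, a nontrivial supersolution to the homogeneous equation \eqref{homo_eq_p-lapacain}: since $\mu \in \mathcal{M}^{+}(\mathbb{R}^n)$, for $\varphi \in C_0^{\infty}(\mathbb{R}^n)$ with $\varphi \ge 0$ one has $\int_{\mathbb{R}^n} \varphi \, d\mu \ge 0$, so the defining inequality for a supersolution to \eqref{main_eq_p-lapacain} immediately gives $\int_{\mathbb{R}^n} |Du|^{p-2} Du \cdot \nabla \varphi \, dx \ge \int_{\mathbb{R}^n} u^q \varphi \, d\sigma$; moreover $u \in \dot{W}^{1,p}_{0}(\mathbb{R}^n) \subset W^{1,p}_{loc}(\mathbb{R}^n)$, so $Du = \nabla u$ and $-\Delta_p u = \omega[u]$. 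Thus $u$ satisfies the hypotheses of Lemma~\ref{converse_existence_thm}, and applying it at once yields
\[
-\Delta_p u \in W^{-1,p'}(\mathbb{R}^n) \cap \mathcal{M}^{+}(\mathbb{R}^n), \qquad u \in L^{1+q}(\mathbb{R}^n, d\sigma)
\]
for a quasicontinuous representative of $u$, together with condition \eqref{condition_sigma_alpha=1}.

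It remains only to deduce \eqref{condition_mu_alpha=1}. Keeping the $\mu$-term in the supersolution inequality gives $\omega[u] = -\Delta_p u \ge u^q \sigma + \mu \ge \mu$ as measures, so I would invoke the elementary fact that a nonnegative Borel measure dominated by a measure in $W^{-1,p'}(\mathbb{R}^n)$ is itself in $W^{-1,p'}(\mathbb{R}^n)$. Concretely, writing $\omega := -\Delta_p u$, monotonicity of the Wolff potential ($\W_{1,p}\mu \le \W_{1,p}\omega$ because $\mu(B) \le \omega(B)$ for every ball $B$) together with $0 \le \mu \le \omega$ gives
\[
\int_{\mathbb{R}^n} \W_{1,p}\mu \, d\mu \le \int_{\mathbb{R}^n} \W_{1,p}\omega \, d\mu \le \int_{\mathbb{R}^n} \W_{1,p}\omega \, d\omega < +\infty,
\]
the last integral being finite precisely because $\omega \in W^{-1,p'}(\mathbb{R}^n)$, by Wolff's inequality \eqref{Wolff's ineq} and Remark~\ref{Rem0}; by the equivalence recorded in \eqref{condition_mu_alpha=1} this is exactly $\mu \in \dot{W}^{-1,p'}(\mathbb{R}^n)$. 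Alternatively, one can argue directly on test functions: for $\varphi \in C_0^{\infty}(\mathbb{R}^n)$, $|\int_{\mathbb{R}^n} \varphi \, d\mu| \le \int_{\mathbb{R}^n} |\varphi| \, d\omega = \langle \omega, |\varphi| \rangle \le \|\omega\|_{W^{-1,p'}} \, \|\nabla \varphi\|_{L^{p}(\mathbb{R}^n)}$, the middle equality being Theorem~\ref{representative_thm} applied to $|\varphi| \in \dot{W}^{1,p}_{0}(\mathbb{R}^n)$.

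Since essentially all of the analytic content is packaged inside Lemma~\ref{converse_existence_thm}, the only points needing care are the identification $Du = \nabla u$ on $\dot{W}^{1,p}_{0}(\mathbb{R}^n)$ and the passage from the inhomogeneous to the homogeneous supersolution (both routine once $\mu \ge 0$ is used). If instead one wanted a self-contained proof, the main steps would be: (i) $-\Delta_p u$ defines a bounded functional on $\dot{W}^{1,p}_{0}(\mathbb{R}^n)$ by H\"{o}lder's inequality since $|\nabla u| \in L^{p}(\mathbb{R}^n)$, hence $-\Delta_p u \in W^{-1,p'}(\mathbb{R}^n) \cap \mathcal{M}^{+}(\mathbb{R}^n)$; (ii) by Theorem~\ref{representative_thm} and the supersolution property, $\int_{\mathbb{R}^n} u^{1+q} \, d\sigma \le \int_{\mathbb{R}^n} u \, d\omega[u] = \langle \omega[u], u \rangle = \int_{\mathbb{R}^n} |\nabla u|^{p} \, dx < +\infty$, so $u \in L^{1+q}(\mathbb{R}^n, d\sigma)$; (iii) Theorem~\ref{lower_ptwise_est_homo} yields $u \ge c (\W_{1,p}\sigma)^{\frac{p-1}{p-1-q}}$ $d\sigma$-a.e., which with (ii) gives \eqref{condition_sigma_alpha=1}; and (iv) the monotonicity argument above for \eqref{condition_mu_alpha=1}. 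The expected main obstacle in either route is justifying that one may legitimately test with $u$ itself in step (ii) — precisely the content of the Brezis--Browder theorem.
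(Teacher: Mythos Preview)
Your proof is correct and follows essentially the same route as the paper: reduce to the homogeneous case via Lemma~\ref{converse_existence_thm} (since $\mu\ge 0$ makes any supersolution to \eqref{main_eq_p-lapacain} a supersolution to \eqref{homo_eq_p-lapacain}), and then recover \eqref{condition_mu_alpha=1} from $\mu \le -\Delta_p u \in W^{-1,p'}(\mathbb{R}^n)$. The paper compresses the last step into the single phrase ``the former implies \eqref{condition_mu_alpha=1}''; your explicit monotonicity argument via Wolff's inequality (or the test-function bound) is exactly what is being left to the reader there.
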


\begin{proof}
It follows directly from Lemma \ref{converse_existence_thm} that  
\[
-\Delta_{p} u \in W^{-1,p'}(\mathbb{R}^n) \cap \mathcal{M}^{+}(\mathbb{R}^n)
\quad 
\text{and}
\quad
u \in L^{1+q}(\mathbb{R}^n, d\sigma)
\] 
for a quasicontinuous representative of $u$. The former implies \eqref{condition_mu_alpha=1}. The latter yields \eqref{condition_sigma_alpha=1} in view of the global pointwise lower bound  for supersolutions contained (Theorem \ref{lower_ptwise_est_homo}).
\end{proof}

In the next theorem, we verify that conditions \eqref{condition_sigma_alpha=1} and \eqref{condition_mu_alpha=1} are sufficient for the existstence of 
a positive finite energy solution to equation \eqref{main_eq_p-lapacain}. Further, the minimality of such a solution is also proven. 

We first observe that for $1<p<n$, $0<\alpha<\frac{n}{p}$, $\omega, \nu \in \mathcal{M}^{+}(\R^n)$ and $\gamma, \beta \geq 0$, 
\begin{equation}\label{inq}
\W_{\alpha, p}(\gamma\omega + \beta\nu) \leq A \left( \gamma^{\frac{1}{p-1}} \W_{\alpha, p}\omega + \beta^{\frac{1}{p-1}} \W_{\alpha, p}\nu\right),
\end{equation}
where $A = A(\alpha, p, n) \geq 1 $. This follows immediately from the definition of Wolff's potential and the estimates 
\[
|a+b|^{r} \leq 
\begin{cases}
2^{r-1} \left( |a|^{r} + |b|^{r} \right) \quad \text{for} \;\; 1\leq r < \infty, \\
 |a|^{r} + |b|^{r}  \qquad \quad \;\;\; \text{for} \;\; 0 \leq r < 1,
\end{cases}
\]
where $a,b \in \R$.
\begin{Thm} \label{thm_main}
Let $1<p<n$, $0<q<p-1$ and $\sigma, \mu \in \mathcal{M}^{+}(\mathbb{R}^n)$. Suppose
\eqref{condition_sigma_alpha=1} and \eqref{condition_mu_alpha=1} hold. Then there exists a positive finite energy solution $w$ to equation \eqref{main_eq_p-lapacain}. Moreover, $w$ is a minimal solution in the sense that $w \leq u$ q.e. (for their respective quasicontinuous representatives) for any positive finite energy solution $u$ to \eqref{main_eq_p-lapacain}.
\end{Thm}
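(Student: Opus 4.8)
The plan is to build $w$ as the increasing limit of a Perron-type iteration for the $p$-Laplace equation, using a solution of the associated integral equation as a uniform ceiling. First I would produce that ceiling: by Lemma~\ref{relation_condition_p-laplacian}, conditions \eqref{condition_sigma_alpha=1} and \eqref{condition_mu_alpha=1} imply \eqref{condition_mu_1+q}, i.e.\ $\W_{1,p}\mu\in L^{1+q}(\R^n,d\sigma)$, so Theorem~\ref{thm_existence} with $\alpha=1$ yields a positive $\tilde u\in L^{1+q}(\R^n,d\sigma)$ solving $\tilde u=\W_{1,p}(\tilde u^{q}d\sigma)+\W_{1,p}\mu$ $d\sigma$-a.e. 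Since then $\tilde u\ge\W_{1,p}(\tilde u^{q}d\sigma)$, Lemma~\ref{regulartiy_of_measure} gives $\tilde u^{q}d\sigma\in W^{-1,p'}(\R^n)\cap\mathcal{M}^{+}(\R^n)$; together with \eqref{condition_mu_alpha=1} this shows that every measure $g\,d\sigma+\mu$ with $0\le g\le C\tilde u^{q}$ lies in $W^{-1,p'}(\R^n)\cap\mathcal{M}^{+}(\R^n)$, being dominated by $C\tilde u^{q}d\sigma+\mu$. (Recall also that $\sigma$ is absolutely continuous with respect to $\mathrm{cap}_{p}$ by the consequence of Lemma~\ref{lemma_cap}, so $q.e.$ inequalities hold $d\sigma$-a.e.)

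Next I would set $v_0\in\dot{W}^{1,p}_{0}(\R^n)$ to be the $p$-superharmonic solution of $-\Delta_{p}v_0=\mu$ from Remark~\ref{Rem0}, and inductively let $v_{j+1}\in\dot{W}^{1,p}_{0}(\R^n)$ solve $-\Delta_{p}v_{j+1}=v_j^{q}d\sigma+\mu$, which is legitimate by Remark~\ref{Rem0} as soon as $v_j^{q}d\sigma+\mu\in W^{-1,p'}(\R^n)\cap\mathcal{M}^{+}(\R^n)$. The weak comparison principle (Lemma~\ref{weak_comparison_principle}) applied to the ordered Riesz measures gives $v_j\le v_{j+1}$ q.e. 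The crucial and hardest point is the $j$-independent ceiling $v_j\le C\tilde u$ q.e.\ with $C:=(KA)^{\frac{p-1}{p-1-q}}$, where $K$ is the constant of Theorem~\ref{pointwise_est_p-superharmonic} and $A$ that of \eqref{inq}: one has $v_0\le K\W_{1,p}\mu\le K\tilde u\le C\tilde u$, and if $v_j\le C\tilde u$, then Theorem~\ref{pointwise_est_p-superharmonic} (its hypotheses hold since elements of $\dot{W}^{1,p}_{0}(\R^n)$, $p<n$, vanish at infinity), monotonicity of Wolff potentials and \eqref{inq} yield
\[
v_{j+1}\le K\,\W_{1,p}\!\bigl(C^{q}\tilde u^{q}d\sigma+\mu\bigr)\le KA\,C^{\frac{q}{p-1}}\bigl(\W_{1,p}(\tilde u^{q}d\sigma)+\W_{1,p}\mu\bigr)=KA\,C^{\frac{q}{p-1}}\,\tilde u\le C\tilde u,
\]
the last step being the fixed-point identity $KA\,C^{q/(p-1)}=C$. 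This is exactly where the hypothesis $q<p-1$ enters: only then is $q/(p-1)<1$, the exponent $\frac{p-1}{p-1-q}$ finite, and $C<\infty$; I expect this uniform bound to be the main obstacle.

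Granted the ceiling, $w:=\lim_j v_j$ is $p$-superharmonic (a monotone limit of such, with $w\le C\tilde u<\infty$ a.e.), satisfies $0<v_0\le w\le C\tilde u$ and hence $w\in L^{q}_{loc}(\R^n,d\sigma)$, and belongs to $\dot{W}^{1,p}_{0}(\R^n)$: the energies are uniformly bounded, $\|v_j\|_{\dot{W}^{1,p}_{0}}^{p}=\int v_j\,d(-\Delta_{p}v_j)\le C^{1+q}\int\tilde u^{1+q}\,d\sigma+C\int\tilde u\,d\mu<\infty$, the two integrals being finite by \eqref{condition_sigma_alpha=1}, \eqref{condition_mu_alpha=1}, Theorem~\ref{representative_thm} and Theorem~\ref{pointwise_est_p-superharmonic}, so $w$ is also the weak $\dot{W}^{1,p}_{0}$-limit. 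Passing to the limit with the weak continuity Theorem~\ref{weak_cont_p-Laplacian}, and using $v_j^{q}\uparrow w^{q}\le(C\tilde u)^{q}\in L^{1}_{loc}(d\sigma)$ so that $v_j^{q}d\sigma+\mu\to w^{q}d\sigma+\mu$ weakly, one gets $-\Delta_{p}w=w^{q}d\sigma+\mu$; thus $w$ is a positive finite energy solution of \eqref{main_eq_p-lapacain}.

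For minimality, let $u$ be any positive finite energy solution of \eqref{main_eq_p-lapacain}. By Lemma~\ref{converse_existence_thm_inhomo}, $u^{q}d\sigma+\mu\in W^{-1,p'}(\R^n)\cap\mathcal{M}^{+}(\R^n)$, so Lemma~\ref{weak_comparison_principle} applied to $-\Delta_{p}v_0=\mu\le u^{q}d\sigma+\mu=-\Delta_{p}u$ gives $v_0\le u$ q.e.; and if $v_j\le u$ q.e., then $-\Delta_{p}v_{j+1}=v_j^{q}d\sigma+\mu\le u^{q}d\sigma+\mu=-\Delta_{p}u$ (the left measure lying in $W^{-1,p'}\cap\mathcal{M}^{+}$ by the domination from the first paragraph), so $v_{j+1}\le u$ q.e. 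Letting $j\to\infty$ — a countable operation, discarding only a set of $p$-capacity zero — gives $w\le u$ q.e., which is the asserted minimality.
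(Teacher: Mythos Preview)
Your proposal is correct and follows essentially the same strategy as the paper's proof: produce an $L^{1+q}(d\sigma)$ solution of the Wolff-potential integral equation as a ceiling, run an increasing iteration $-\Delta_p v_{j+1}=v_j^q d\sigma+\mu$ in $\dot W^{1,p}_0$, control it uniformly via Theorem~\ref{pointwise_est_p-superharmonic} and \eqref{inq} with the constant $(KA)^{\frac{p-1}{p-1-q}}$, and pass to the limit by weak continuity; minimality is then obtained by the same induction against any other finite energy solution via Lemma~\ref{weak_comparison_principle}. The only cosmetic differences are that the paper rescales the integral-equation solution $v$ so that $w_j\le v$ (equivalent to your $v_j\le C\tilde u$) and starts the iteration from $w_0=K^{-1}\W_{1,p}\mu$ rather than from the $\dot W^{1,p}_0$-solution of $-\Delta_p v_0=\mu$; your choice makes the monotonicity $v_0\le v_1$ follow directly from Lemma~\ref{weak_comparison_principle}.
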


\begin{proof}
We first prove the existence of $w$. Since \eqref{condition_sigma_alpha=1} and \eqref{condition_mu_alpha=1} hold, then by Lemma \ref{relation_condition_p-laplacian},  \eqref{condition_mu_1+q} holds. By Theorem \ref{thm_existence} in the case $\alpha =1$,
there exists a positive solution $v \in L^{1+q}(\R^n, d\sigma)$ to the integral equation 
\[
v = \W_{1,p}(v^{q}d\sigma) + \W_{1,p}\mu \qquad \text{in} \;\; \R^n.
\]
Using a constant multiple $c^{-1}v$, where $c>0$, in place of $v$, we have
\[
v = c^{\frac{p-1-q}{p-1}} \W_{1,p}(v^{q}d\sigma) + c\W_{1,p}\mu \qquad \text{in}\;\; \R^n.
\]
Choose $c \geq (KA)^{\frac{p-1}{p-1-q}} \geq  KA \geq 1$ where $K \geq 1$ is the constant in Theorem \ref{pointwise_est_p-superharmonic}, and $A \geq 1$ is the constant in \eqref{inq}. Then, by Lemma \ref{regulartiy_of_measure}, we have
\[
v^{q}d\sigma \in W^{-1,p'}(\R^n).
\]
Set 
\[
w_{0} :=  K^{-1} \W_{1,p}\mu \quad \text{and} \quad d\omega_{0}:={w_{0}}^{q}d\sigma + \mu.
\]
Since $K^{-1} \leq 1 \leq c $ then $0 < w_0 \leq v$, and hence  
\[
w_{0} \in L^{1+q}(\R^n, d\sigma) \quad \text{and} \quad \omega_{0} \in W^{-1, p'}(\R^n).
\]
As discussed in Remark \ref{Rem0}, for such a measure $\omega_{0}$, there exists a unique $p$-superharmonic solution 
$w_1 \in \dot{W}^{1,p}_{0}(\R^n)$ to the equation
\[
-\Delta_{p}w_{1} = \omega_{0} \quad \text{in} \;\; \R^n \quad \text{and} \quad \Vert w_1 \Vert_{\dot{W}^{1,p}_{0}(\R^n)}^{p-1} = \Vert \omega_{0} \Vert_{W^{-1,p}(\R^n)}
\]
for a quasicontinuous representative of $w_1$. Moreover, by Theorem \ref{pointwise_est_p-superharmonic},
\[
0 < w_1 \leq K \W_{1,p}\omega_{0} \leq KA \W_{1,p}(v^{q}d\sigma) + KA \W_{1,p}f \leq v \quad \text{q.e.}
\]
Since $\sigma$ is absolutely continuous with respect to $\text{cap}_{p}(\cdot)$, this yields
\[
w_1 \in L^{1+q}(\R^n, d\sigma) \quad \text{and} \quad  \omega_{1}:={w_{1}}^{q}d\sigma + d\mu  \in W^{-1, p'}(\R^n).
\] 
Again by Theorem \ref{pointwise_est_p-superharmonic},
\[
 w_{0} = K^{-1} \W_{1,p}\mu \leq K^{-1} \W_{1,p}\omega_{0} \leq w_{1} 
\quad \text{q.e.}
\]
We now have
\[
0 < w_0 \leq w_1 \leq v \quad \text{q.e.}
\]
We shall construct, by induction, a sequence $ \lbrace w_j \rbrace_{1}^{\infty}$ so that 
\begin{equation}\label{sequence_construction}
\begin{cases}
-\Delta_{p} w_{j} = \sigma w^{q}_{j-1} + \mu \quad \text{in} \;\; \mathbb{R}^n, \\ 
w_{j} \in  L^{1+q}(\R^n, d\sigma) \cap \dot{W}^{1,p}_{0}(\R^n), \\ 
\sup_{j \in \N} \Vert w_j \Vert_{\dot{W}_{0}^{1,p}(\R^n)} < \infty, \\
w_{j-1}^{q}d\sigma + d\mu \in W^{-1,p'}(\R^n), \\
0 < w_{j-1} \leq w_{j} \leq v \;\; \text{q.e.}
\end{cases}
\end{equation}
We set 
\[
d\omega_{j} := w^{q}_{j}d\sigma + d\mu, \quad j \in \N.
\] 
Suppose $ w_{1}, w_{2},..., w _{j-1}$ have been constructed. Since $\omega_{j-1} \in W^{-1,p'}(\R^n)$, then by Remark \ref{Rem0}, there exists a unique $p$-superharmonic solution $w_{j} \in \dot{W}^{1,p}_{0}(\R^n)$ to the equation
\[
-\Delta_{p} w_{j} = \omega_{j-1} \quad \text{in} \;\; \R^n.
\]
Moreover,
\[
\Vert w_{j} \Vert_{\dot{W}^{1,p}_{0}(\R^n)}^{p} = \Vert \omega_{j-1} \Vert_{W^{-1,p'}(\R^n)}^{p'} \leq 
\int_{\R^n} w_{j}w_{j-1}^{q}\;d\sigma + \Vert \mu \Vert_{W^{-1,p'}(\R^n)}^{p'}.
\]
Applying Theorem \ref{pointwise_est_p-superharmonic}, we obtain
\[
w_{j} \leq K \W_{1,p} \omega_{j-1} \leq KA \W_{1,p}(w_{j-1}^{q}d\sigma) + KA \W_{1,p}\mu \quad \text{q.e.}
\]
Since $w_{j-1} \leq v$ q.e. then
\[
w_{j} \leq KA \W_{1,p}(v^{q}d\sigma) + KA \W_{1,p}\mu \leq  v \quad \text{q.e.}
\]
Hence, $w_{j} \in L^{1+q}(\R^n, d\sigma)$ since $\sigma$ is absolutely continuous with respect to $\text{cap}_{p}(\cdot)$. Furthermore,
\begin{align*}
\Vert w_{j} \Vert_{\dot{W}_{0}^{1,p}(\R^n)}^{p} 
& \leq \int_{\R^n} w_{j}w_{j-1}^{q}\;d\sigma + \Vert \mu \Vert_{W^{-1,p'}(\R^n)}^{p'}\\
& \leq \int_{\R^n} v^{1+q}\;d\sigma + \Vert \mu \Vert_{W^{-1,p'}(\R^n)}^{p'} < +\infty.
\end{align*}
This shows that $\lbrace w_{j} \rbrace_{1}^{\infty}$ is a bounded sequence in $\dot{W}^{1,p}_{0}(\R^n)$.
Moreover, since $\omega_{j-2} \leq \omega_{j-1}$, then by Weak Comparison Principle (Lemma \ref{weak_comparison_principle}), $w_{j-1} \leq w_{j}$ q.e. Hence, the sequence  $\lbrace w_{j} \rbrace_{1}^{\infty}$ satisfying 
\eqref{sequence_construction} has been constructed. Applying the weak continuity 
of $p$-Laplacian (Theorem \ref{weak_cont_p-Laplacian}), the Monotone Convergence Theorem and  the Weak Compactness Property in $\dot{W}_{0}^{1,p}(\R^n)$, see
 \cite[Lemma 1.33]{HKM06}, we deduce that the pointwise limit 
$w := \lim_{j \rightarrow \infty} w_j$ is a positive finite energy solution to \eqref{main_eq_p-lapacain}.

We now prove the minimality of $w$. Suppose $u$ is any positive finite energy solution to \eqref{main_eq_p-lapacain}. Set $d\omega := u^{q}d\sigma + d\mu$. By Lemma \ref{converse_existence_thm_inhomo}, we have 
\[
u \in L^{1+q}(\R^n, d\sigma) \quad \text{and} \quad \omega \in W^{-1,p'}(\R^n) \cap \mathcal{M}^{+}(\R^n),
\]
for a quasicontinuous representative of $u$. We need to show that $w \leq u$ q.e. Notice that
\[
u \geq (\W_{1,p}\mu) > K^{-1}(\W_{1,p}\mu) = w_0 \quad \text{q.e.}
\]
Therefore $\omega_{0} \leq \omega $ since $\sigma$ is absolutely continuous with respect to $\text{cap}_{p}(\cdot)$. By the Weak Comparison Principle (Lemma \ref{weak_comparison_principle}), $w_1 \leq u$ q.e. Arguing by induction as above, we see that 
\[
w_{j-1} \leq w_{j} \leq u \quad \text{q.e.}
\]
It follows that $ w =\lim_{j \rightarrow \infty} w_{j} \leq u$\, q.e., which proves the claim.
\end{proof}

\begin{Rem}\label{phuc-v}
For a similar equation in a domain $\Omega \subset \R^n$,
\begin{equation}
-\Delta_{p}u = u^{q}\sigma + \mu \quad \text{in} \;\; \Omega,
\end{equation}
where $1<p<n$, $0<q<p-1$ and $\sigma, \mu \in \mathcal{M}^{+}(\Omega)$, we also have analogous sufficient conditions for the existence of a positive finite energy solution in terms of truncated Wolff's potential, namely:
\begin{equation}\label{condition_sigma_alpha=1_domain}
\W_{1, p}^{R}\sigma \in L^{\frac{(1+q)(p-1)}{p-1-q}}(\Omega, d\sigma), \quad R \geq 2\text{diam}(\Omega),
\end{equation}
and 
\begin{equation}\label{condition_mu_alpha=1_domain}
\mu \in \dot{W}^{-1,p'}(\Omega).
\end{equation}
Here, for $1<p<\infty$, $0< \alpha < \frac{n}{p}$ and $\sigma \in \mathcal{M}^{+}(\Omega)$, the truncated Wolff potential $\W_{\alpha, p}^{R}\sigma$  is defined by (see \cite{KuMi14})
\[
\W_{\alpha, p}^R\sigma(x) = \int_{0}^{R} \left[ \frac{\sigma(B(x,r)\cap\Omega)}{r^{n-\alpha p}}\right]^{\frac{1}{p-1}}\; \frac{dr}{r}, \quad x \in \Omega, 
0 < R \leq +\infty.
\]
Moreover, conditions \eqref{condition_sigma_alpha=1_domain} and \eqref{condition_mu_alpha=1_domain} are also necessary whenever $\sigma$ and $\mu$ have compact supports in $\Omega$. These results are deduced easily from Theorem~\ref{thm_main1}; see details in \cite{PV08}. 
\end{Rem}
\section{Existence of a Positive Finite Energy Solution to Equation \eqref{main_eq_frac_laplacian}}

In this section, we employ an argument similar to the one used  in  the previous section to deduce necessary and sufficient conditions for the existence of a positive finite energy solution to fractional Laplace equation \eqref{main_eq_frac_laplacian}.

\begin{Def} \label{Def_Sol_2}
Let $0<q<1$, $0< \alpha < \frac{n}{2}$ and $\sigma, \mu \in \mathcal{M}^{+}(\mathbb{R}^n)$. A finite energy solution $u$ to equation
\eqref{main_eq_frac_laplacian} will be understood in the sense that
$u \in L^{q}_{loc}(\R^n, d\sigma) \cap \dot{H}^{\alpha}(\R^n)$, $u \geq 0\;\;d\sigma\text{-}a.e.$ such that 
\begin{equation} \label{int_eq_frac_laplacian}
\left( -\Delta \right)^{\frac{\alpha}{2}}u = \I_{\alpha} (u^{q}d\sigma) + \I_{\alpha}\mu  \qquad dx\text{-}a.e.
\end{equation}
\end{Def}

\begin{Rem}\label{Rem_Def_2}
Using the same notation as above, suppose $u$ is a positive   finite energy solution to \eqref{main_eq_frac_laplacian}. Applying the Riesz potential $\I_{\alpha}$ of order $\alpha$ to both sides of \eqref{int_eq_frac_laplacian} yields
\[
u(x)= \I_{2\alpha}(u^{q}d\sigma)(x) + \I_{2\alpha}\mu(x)
\quad \text{whenever} \;\; u(x) < +\infty.
\]
Notice that  $u \in L^{q}_{loc}(\R^n, d\sigma) \cap \dot{H}^{\alpha}(\R^n)$. Then 
\begin{equation}\label{eq_int}
u = \I_{2\alpha}(u^{q}d\sigma) + \I_{2\alpha}\mu \quad d\sigma\text{-}a.e. \;\;\text{and}\;\; q.e.
\end{equation}
In particular,
\[
u \geq \I_{2\alpha}(u^{q}d\sigma) \quad d\sigma\text{-}a.e.,
\]
which implies, by Lemma \ref{lemma_cap}, that  $\sigma$ is absolutely continuous with respect to $\text{cap}_{\alpha, 2}(\cdot)$. On the other hand, \eqref{Def_Sol_2} implies in particular that 
\[
\left( -\Delta \right)^{\frac{\alpha}{2}}u  \geq \I_{\alpha}\mu  \quad dx\text{-}a.e.
\]
Therefore $\I_{\alpha}\mu \in L^{2}(\R^n)$, and hence $\mu \in \dot{H}^{-\alpha}(\R^n)$. 
In particular, $\mu$ is absolutely continuous with respect to $\text{cap}_{\alpha, 2}(\cdot)$ 
(see, for example, \cite[Sec. 7]{AH96}). In summary, $u$ satisfies integral equation \eqref{eq_int} in the following senses: a.e., $d\sigma$-a.e., $d\mu$-a.e., and q.e.
\end{Rem}

The following important observation is analogous to Lemma \ref{relation_condition_p-laplacian}.

\begin{Lem} \label{relation_of_conditions}
Let $0<q<1$, $0< \alpha < \frac{n}{2}$ and $\sigma, \mu \in \mathcal{M}^{+}(\mathbb{R}^n)$. Then \eqref{condition_sigma_alpha_2} and \eqref{condition_mu_alpha_2} imply 
\eqref{condition_f_alpha_2}
\end{Lem}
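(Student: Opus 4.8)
The plan is to mimic exactly the argument of Lemma \ref{relation_condition_p-laplacian}, replacing the Wolff potential $\W_{1,p}$ by the Riesz potential $\I_{2\alpha}$ and the $p$-Laplacian trace inequality by the fractional one. First I would recall from \cite{COV06} that condition \eqref{condition_sigma_alpha_2}, namely $\I_{2\alpha}\sigma \in L^{\frac{1+q}{1-q}}(\R^n, d\sigma)$, is equivalent to the trace inequality \eqref{trace_inq2},
\[
\big\| \I_{\alpha} g \big\|_{L^{1+q}(\R^n, d\sigma)} \leq C \| g \|_{L^2(\R^n)}, \qquad \forall g \in L^2(\R^n).
\]
Next, from \eqref{condition_mu_alpha_2}, i.e.\ $\mu \in \dot{H}^{-\alpha}(\R^n)$, the duality characterization recalled in the Preliminaries gives $\I_\alpha \mu \in L^2(\R^n)$. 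The key substitution is then $g := \I_\alpha \mu \in L^2(\R^n)$, which upon insertion into the trace inequality yields
\[
\big\| \I_\alpha(\I_\alpha \mu) \big\|_{L^{1+q}(\R^n, d\sigma)} \leq C \| \I_\alpha \mu \|_{L^2(\R^n)} < +\infty.
\]
Since $\I_\alpha \circ \I_\alpha = \I_{2\alpha}$ (up to the normalization constant, by the semigroup property of Riesz potentials, valid as long as $2\alpha < n$, which holds because $\alpha < \tfrac{n}{2}$), this is precisely $\I_{2\alpha}\mu \in L^{1+q}(\R^n, d\sigma)$, which is \eqref{condition_f_alpha_2}.

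There is one technical point worth noting, and it is genuinely easier here than in the $p$-Laplacian case. In Lemma \ref{relation_condition_p-laplacian} one needed the nontrivial pointwise bound $\W_{1,p}\mu \leq C\, \I_1\big((\I_1\mu)^{1/(p-1)}\big)$ from \cite{Maz11} to pass from the linear two-fold Riesz potential back to the genuinely nonlinear Wolff potential. In the present situation $p=2$, so the Wolff potential \emph{is} the Riesz potential: $\W_{\alpha,2}\mu = \I_{2\alpha}\mu$, and the identity $\I_\alpha(\I_\alpha \mu) = \I_{2\alpha}\mu$ is exact rather than merely a one-sided estimate. Hence no auxiliary pointwise inequality is required; the semigroup property of Riesz potentials does all the work.

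The main (and only) obstacle is bookkeeping around normalization constants and the precise form of the semigroup law $\I_\alpha \I_\beta = \I_{\alpha+\beta}$ for measures, but since the paper has already declared that normalization constants are dropped for convenience, this is not a substantive issue. One should perhaps remark that $\I_{2\alpha}\mu$ need not be finite everywhere, but as with the treatment in Remark \ref{Rem_Def_2} it suffices to have the $L^{1+q}(d\sigma)$ bound, and the substitution $g = \I_\alpha\mu$ is legitimate because $\I_\alpha\mu \in L^2(\R^n)$ is a genuine element of the domain of the trace inequality. Thus the proof is a direct three-line chain: \eqref{condition_sigma_alpha_2} $\Leftrightarrow$ \eqref{trace_inq2}; \eqref{condition_mu_alpha_2} $\Rightarrow$ $\I_\alpha\mu \in L^2$; substitute and use $\I_\alpha\I_\alpha = \I_{2\alpha}$ to conclude \eqref{condition_f_alpha_2}.
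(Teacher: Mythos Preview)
Your proof is correct and follows essentially the same approach as the paper: invoke the equivalence of \eqref{condition_sigma_alpha_2} with the trace inequality, note that \eqref{condition_mu_alpha_2} gives $\I_\alpha\mu\in L^2(\R^n)$, and substitute $g=\I_\alpha\mu$ using the semigroup property $\I_\alpha\I_\alpha=\I_{2\alpha}$. Your additional remark explaining why the auxiliary pointwise Wolff estimate from Lemma~\ref{relation_condition_p-laplacian} is unnecessary here (since $\W_{\alpha,2}=\I_{2\alpha}$) is a helpful clarification not spelled out in the paper.
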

\begin{proof}
As shown in \cite{COV00}, \eqref{condition_sigma_alpha_2}
holds if and only if there exists a positive constant $c$ such that
\begin{equation}\label{condition_COV00_1}
\big\Vert \I_{\alpha}g \big\Vert_{L^{1+q}(\R^n, d\sigma)}
\leq c \Vert g \Vert_{L^{2}(\R^n)}, \quad \forall g \in L^{2}(\R^n).
\end{equation}
Letting $g := \I_{\alpha}\mu \in L^{2}(\R^n)$ in \eqref{condition_COV00_1}, we have 
\[
\big\Vert \I_{2\alpha}\mu \big\Vert_{L^{1+q}(\R^n, d\sigma)}
\leq c \big\Vert \I_{\alpha}\mu \big\Vert_{L^{2}(\R^n)} < \infty,
\] 
which proves \eqref{condition_f_alpha_2}.
\end{proof}

The neccessary conditions for the existence of a positive finite energy solution to equation \eqref{main_eq_frac_laplacian} are established in the following lemma.

\begin{Lem} \label{lemma_converse_2}
Let $0<q<1$, $0< \alpha < \frac{n}{2}$ and $\sigma, \mu \in \mathcal{M}^{+}(\mathbb{R}^n)$. Suppose there exists a positive finite energy solution $u$ to equation \eqref{main_eq_frac_laplacian}. Then \eqref{condition_mu_alpha_2} holds and $u \in L^{1+q}(\R^n, d\sigma)$. Consequently, \eqref{condition_sigma_alpha_2} holds.
\end{Lem}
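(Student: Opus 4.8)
The plan is to extract the two necessary conditions \eqref{condition_mu_alpha_2} and \eqref{condition_sigma_alpha_2} directly from the integral representation \eqref{eq_int} already recorded in Remark~\ref{Rem_Def_2}, together with the lower pointwise bound for supersolutions of Wolff (here Riesz) type. Since $u$ is a positive finite energy solution, Definition~\ref{Def_Sol_2} gives $u\in\dot H^\alpha(\R^n)$ and $(-\Delta)^{\frac\alpha2}u = \I_\alpha(u^q d\sigma)+\I_\alpha\mu$ $dx$-a.e., so in particular $(-\Delta)^{\frac\alpha2}u\ge \I_\alpha\mu$ $dx$-a.e. Because $u\in\dot H^\alpha(\R^n)$ means $(-\Delta)^{\frac\alpha2}u\in L^2(\R^n)$, we conclude $\I_\alpha\mu\in L^2(\R^n)$, which by the duality description of $\dot H^{-\alpha}(\R^n)$ recalled in Section~2 is exactly \eqref{condition_mu_alpha_2}.

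Next I would establish $u\in L^{1+q}(\R^n,d\sigma)$. By Remark~\ref{Rem_Def_2}, $u = \I_{2\alpha}(u^qd\sigma)+\I_{2\alpha}\mu$ holds $d\sigma$-a.e. (and q.e.). Testing the weak formulation in Definition~\ref{Def_Sol_2} against an appropriate sequence of cutoffs approximating $u$ — equivalently, pairing the equation $(-\Delta)^\alpha u = u^q d\sigma+\mu$ with $u$ itself in the $\dot H^\alpha$–$\dot H^{-\alpha}$ duality — yields the energy identity
\[
\|u\|_{\dot H^\alpha(\R^n)}^2 = \int_{\R^n} u\cdot u^q\,d\sigma + \int_{\R^n} u\,d\mu = \int_{\R^n} u^{1+q}\,d\sigma + \int_{\R^n} u\,d\mu .
\]
Since the left-hand side is finite and both terms on the right are nonnegative, $\int_{\R^n}u^{1+q}\,d\sigma<\infty$, i.e. $u\in L^{1+q}(\R^n,d\sigma)$. (The justification that a quasicontinuous representative of $u$ is $d\sigma$- and $d\mu$-integrable and that the pairing reduces to these integrals is the analogue of Theorem~\ref{representative_thm} in the fractional setting, combined with the absolute continuity of $\sigma,\mu$ with respect to $\mathrm{cap}_{\alpha,2}$ noted in Remark~\ref{Rem_Def_2}.)

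Finally, from \eqref{eq_int} we have $u\ge \I_{2\alpha}(u^qd\sigma)=\W_{\alpha,2}(u^qd\sigma)$ $d\sigma$-a.e., so Theorem~\ref{lower_ptwise_est} (with $p=2$) applies and gives
\[
u \ge c\,(\W_{\alpha,2}\sigma)^{\frac{1}{1-q}} = c\,(\I_{2\alpha}\sigma)^{\frac{1}{1-q}} \quad d\sigma\text{-}a.e.
\]
Raising to the power $1+q$ and integrating against $\sigma$, the finiteness $u\in L^{1+q}(\R^n,d\sigma)$ forces $\int_{\R^n}(\I_{2\alpha}\sigma)^{\frac{1+q}{1-q}}\,d\sigma<\infty$, which is precisely \eqref{condition_sigma_alpha_2}.

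\textbf{Main obstacle.} The one step requiring care is the energy identity: one must justify that the weak formulation of \eqref{main_eq_frac_laplacian} may be tested against $u$ (rather than merely against $C_0^\infty$ functions), that the resulting pairings $\langle u^q\sigma,u\rangle$ and $\langle\mu,u\rangle$ coincide with the integrals $\int u^{1+q}d\sigma$ and $\int u\,d\mu$ for the quasicontinuous representative, and that the nonlinear term is handled without already knowing $u\in L^{1+q}(d\sigma)$ — typically via truncation $T_k(u)$, the energy inequality for truncations, and monotone convergence, using that $\sigma$ and $\mu$ charge no set of $\mathrm{cap}_{\alpha,2}$-capacity zero. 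Everything else is a direct transcription of the $p$-Laplacian argument (Lemma~\ref{converse_existence_thm_inhomo}) to the linear fractional case $p=2$.
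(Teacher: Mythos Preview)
Your outline is correct and for the first and last steps coincides with the paper's proof (the paper invokes Remark~\ref{Rem_Def_2} for \eqref{condition_mu_alpha_2} and Remark~\ref{rem_existence} with $p=2$ for \eqref{condition_sigma_alpha_2}, which is exactly your lower-bound argument via Theorem~\ref{lower_ptwise_est}).

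The only genuine difference is in how you obtain $u\in L^{1+q}(\R^n,d\sigma)$. You pair the equation $(-\Delta)^\alpha u = u^q\sigma+\mu$ with $u$ in the $\dot H^\alpha$--$\dot H^{-\alpha}$ duality to get the energy identity, and you correctly flag the obstacle: a priori you do not know $u^q d\sigma\in\dot H^{-\alpha}$, so testing against $u$ needs a truncation argument and a fractional Brezis--Browder theorem. The paper sidesteps this entirely. It works instead with the \emph{halved} equation $(-\Delta)^{\alpha/2}u=\I_\alpha(u^qd\sigma)+\I_\alpha\mu$, which holds $dx$-a.e.\ by Definition~\ref{Def_Sol_2}; multiplying by an arbitrary nonnegative $\varphi\in L^2(\R^n)$ and integrating is automatically justified by Tonelli, and gives
\[
\int_{\R^n} u^q\,\I_\alpha\varphi\,d\sigma \le \big(\|(-\Delta)^{\alpha/2}u\|_{L^2}+\|\I_\alpha\mu\|_{L^2}\big)\|\varphi\|_{L^2}.
\]
Choosing $\varphi=(-\Delta)^{\alpha/2}u$ (nonnegative, in $L^2$, with $\I_\alpha\varphi=u$) yields $\int u^{1+q}\,d\sigma<\infty$ directly, without any approximation or capacity-theoretic justification. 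So both routes work, but the paper's $L^2$-pairing trick is cleaner and eliminates precisely the obstacle you identified.
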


\begin{proof}
Suppose $u$ is a positive finite energy solution to \eqref{main_eq_frac_laplacian}. Then
\eqref{condition_mu_alpha_2} holds as discussed in Remark \ref{Rem_Def_2}. We next show that $u \in L^{1+q}(\R^n, d\sigma)$. 
By \eqref{int_eq_frac_laplacian}, for each nonnegative function $\varphi \in L^{2}(\R^n)$, we have
\[
\int_{\R^n} [ (-\Delta)^{\frac{\alpha}{2}}u ] \varphi \;dx = 
\int_{\R^n} [\I_{\alpha} (u^{q}d\sigma)] \varphi \;dx  + 
\int_{\R^n} [\I_{\alpha}\mu] \varphi \;dx.
\]
Applying Tonelli's Theorem and Schwarz's inequality, we obtain
\begin{equation}\label{converse2_est1}
\begin{split}
\bigg\vert \int_{\R^n} u^{q} \left[ \I_{\alpha}\varphi \right]\;d\sigma \bigg\vert 
&= \bigg\vert \int_{\R^n} \left[\I_{\alpha}(u^q d\sigma) \right] \varphi \;dx \bigg\vert   \\
&\leq  \bigg\vert \int_{\R^n} \left[ (-\Delta)^{\frac{\alpha}{2}}u \right] \varphi \;dx \bigg\vert
+   \bigg\vert \int_{\R^n} (\I_{\alpha}\mu) \varphi \;dx \bigg\vert   \\
&\leq c \Vert \varphi \Vert_{L^{2}(\R^n)},
\end{split}
\end{equation}
where
$
c:= \Vert (-\Delta)^{\frac{\alpha}{2}}u\Vert_{L^{2}(\R^n)} + 
\Vert \I_{\alpha}\mu \Vert_{L^{2}(\R^n)}<\infty
$, since $u \in \dot{H}^{\alpha}(\R^n)$ and $\mu \in \dot{H}^{-\alpha}(\R^n)$.
Letting $\varphi := (-\Delta)^{\frac{\alpha}{2}}u$, which is a nonnegative function of class $L^{2}(\R^n)$ in \eqref{converse2_est1}, we get
\[
\Vert u \Vert_{L^{1+q}(\R^n, d\sigma)}^{1+q} \leq c \big\Vert (-\Delta)^{\frac{\alpha}{2}}u \big\Vert_{L^2(\R^n)} < +\infty.
\]
This shows that $u \in L^{1+q}(\R^n, d\sigma)$. Notice that
\[
u = \I_{2\alpha}(u^{q}d\sigma) + \I_{2\alpha}\mu \quad d\sigma\text{-}a.e.
\]
Hence, by the discussion in Remark \ref{rem_existence} in the case $p=2$, we have that \eqref{condition_sigma_alpha_2} holds.
\end{proof}

The next theorem shows that conditions \eqref{condition_sigma_alpha_2} and \eqref{condition_mu_alpha_2}  allow us to construct a positive finite energy solution to equation \eqref{main_eq_frac_laplacian}. Minimality of such a solution will be proven as well.

\begin{Thm} \label{thm_converse_1}
Let $0<q<1$, $0< \alpha < \frac{n}{2}$ and $\sigma, \mu \in \mathcal{M}^{+}(\mathbb{R}^n)$. Suppose \eqref{condition_sigma_alpha_2} and \eqref{condition_mu_alpha_2} hold. Then there exists a positive finite energy solution $w$ to equation \eqref{main_eq_frac_laplacian}.
Moreover, $w$ is a minimal solution in the sense that $w \leq u$ q.e. for any positive finite energy solution $u$ to
\eqref{main_eq_frac_laplacian}.
\end{Thm}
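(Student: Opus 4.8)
The plan is to mirror the structure of the proof of Theorem~\ref{thm_main} from Section~3, transporting it to the fractional setting where the analogue of Theorem~\ref{pointwise_est_p-superharmonic} is simply the fact that $\I_{2\alpha}=(-\Delta)^{-\alpha}$ acts as an exact inverse (no two-sided constant $K$ needed), which actually simplifies matters. First I would invoke Lemma~\ref{relation_of_conditions} to deduce \eqref{condition_f_alpha_2} from the hypotheses \eqref{condition_sigma_alpha_2} and \eqref{condition_mu_alpha_2}. Then I would apply Theorem~\ref{thm_existence} in the case $p=2$, $\alpha$ arbitrary in $(0,\tfrac{n}{2})$: since \eqref{condition_sigma_alpha_p} (which is \eqref{condition_sigma_alpha_2} when $p=2$) and \eqref{condition_f_alpha_p} (which is \eqref{condition_f_alpha_2}) hold, there exists a positive $v\in L^{1+q}(\R^n,d\sigma)$ solving the integral equation $v=\I_{2\alpha}(v^q d\sigma)+\I_{2\alpha}\mu$ $d\sigma$-a.e. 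This $v$ serves as the universal upper barrier.

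Next I would set up the iteration. Put $w_0:=\I_{2\alpha}\mu$, and having $w_{j-1}$, let $w_j$ be the finite energy solution of $(-\Delta)^\alpha w_j = w_{j-1}^q\,d\sigma+\mu$, i.e. $w_j=\I_{2\alpha}(w_{j-1}^q d\sigma)+\I_{2\alpha}\mu$. One must check at each stage that $w_{j-1}^q d\sigma+\mu\in\dot H^{-\alpha}(\R^n)$ so that the solution operator is well defined with finite energy: this follows because $w_{j-1}\le v$ $d\sigma$-a.e., hence $\int (\I_{2\alpha}\varphi)\,w_{j-1}^q\,d\sigma\le \int(\I_{2\alpha}\varphi)\,v^q\,d\sigma$, and the latter is controlled via the trace inequality \eqref{trace_inq2} with $g=\I_\alpha\varphi$ together with $v\in L^{1+q}(\R^n,d\sigma)$; combined with $\mu\in\dot H^{-\alpha}$ this gives the needed bound. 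Monotonicity $0<w_{j-1}\le w_j$ is immediate by induction from monotonicity of $\I_{2\alpha}$, and the bound $w_j\le v$ $d\sigma$-a.e.\ follows by induction since $w_j=\I_{2\alpha}(w_{j-1}^q d\sigma)+\I_{2\alpha}\mu\le\I_{2\alpha}(v^q d\sigma)+\I_{2\alpha}\mu=v$. Since $\sigma$ is absolutely continuous with respect to $\mathrm{cap}_{\alpha,2}(\cdot)$ (Lemma~\ref{lemma_cap}), the inequality $w_j\le v$ holding $d\sigma$-a.e.\ also propagates to the $L^{1+q}(d\sigma)$-bound. The energies $\|w_j\|_{\dot H^\alpha}^2=\langle w_{j-1}^q d\sigma+\mu,\,w_j\rangle\le\int v^{1+q}\,d\sigma+\|\mu\|_{\dot H^{-\alpha}}\|v\|$ are uniformly bounded, so a weak-compactness argument in $\dot H^\alpha(\R^n)$ together with the Monotone Convergence Theorem shows the pointwise limit $w:=\lim_j w_j$ lies in $\dot H^\alpha(\R^n)\cap L^{1+q}(\R^n,d\sigma)$ and satisfies $w=\I_{2\alpha}(w^q d\sigma)+\I_{2\alpha}\mu$, hence is a positive finite energy solution of \eqref{main_eq_frac_laplacian} in the sense of Definition~\ref{Def_Sol_2} (apply $(-\Delta)^{\alpha/2}=\I_\alpha^{-1}$).

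For minimality, let $u$ be any positive finite energy solution. By Lemma~\ref{lemma_converse_2}, $u\in L^{1+q}(\R^n,d\sigma)$ and $u=\I_{2\alpha}(u^q d\sigma)+\I_{2\alpha}\mu$ $d\sigma$-a.e.\ and q.e. Then $u\ge\I_{2\alpha}\mu=w_0$ pointwise, and inductively, if $u\ge w_{j-1}$ $d\sigma$-a.e., then $u=\I_{2\alpha}(u^q d\sigma)+\I_{2\alpha}\mu\ge\I_{2\alpha}(w_{j-1}^q d\sigma)+\I_{2\alpha}\mu=w_j$ everywhere (using positivity and monotonicity of $\I_{2\alpha}$, and that $w_{j-1}\le u$ holds $d\sigma$-a.e.\ which is all that enters the integral); since $\sigma\ll\mathrm{cap}_{\alpha,2}$ we may pass between q.e.\ and $d\sigma$-a.e.\ freely. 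Letting $j\to\infty$ gives $w\le u$ q.e.

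The main obstacle I anticipate is not any single deep estimate but the bookkeeping of \emph{in which sense} each inequality holds ($d\sigma$-a.e.\ versus q.e.\ versus everywhere) and justifying that the iterates $w_j$ genuinely have finite energy at every stage --- that is, that $w_{j-1}^q\,d\sigma+\mu\in\dot H^{-\alpha}(\R^n)$ --- which rests on the barrier $v$ and the trace inequality \eqref{trace_inq2}. The absolute continuity of $\sigma$ with respect to $\mathrm{cap}_{\alpha,2}(\cdot)$, supplied by Lemma~\ref{lemma_cap}, is what makes these transitions legitimate, exactly as the role of $\mathrm{cap}_p(\cdot)$ played in the proof of Theorem~\ref{thm_main}.
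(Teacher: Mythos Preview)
Your argument is correct, but it does more work than the paper's proof. You mirror the structure of Theorem~\ref{thm_main} rather closely: you first obtain the barrier $v$ from Theorem~\ref{thm_existence} with $p=2$, then re-run the \emph{same} iteration $w_0=\I_{2\alpha}\mu$, $w_j=\I_{2\alpha}(w_{j-1}^q d\sigma)+\I_{2\alpha}\mu$, tracking the $\dot H^\alpha$-norm at each step, and pass to the limit by weak compactness. But notice that your iterates $w_j$ are \emph{exactly} the iterates already used inside the proof of Theorem~\ref{thm_existence}, so your limit $w$ coincides with your barrier $v$. The paper exploits this: it simply takes $w$ to be the solution furnished by Theorem~\ref{thm_existence} and then verifies $w\in\dot H^\alpha(\R^n)$ in one stroke, using the dual form \eqref{duality} of the trace inequality to get $\|\I_\alpha(w^q d\sigma)\|_{L^2(\R^n)}\le c\|w\|_{L^{1+q}(\R^n,d\sigma)}^q<\infty$, whence $(-\Delta)^{\alpha/2}w=\I_\alpha(w^q d\sigma)+\I_\alpha\mu\in L^2(\R^n)$. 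No second iteration, no uniform energy bounds along a sequence, no weak compactness.

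The reason your more elaborate route was needed in Theorem~\ref{thm_main} (the $p$-Laplacian case) is that there the integral equation involving $\W_{1,p}$ is only an \emph{approximate} inverse of $-\Delta_p$ (the constants $K$, $A$), so one cannot simply read off a differential solution from an integral solution; the iterates must be genuine $p$-superharmonic functions and the weak continuity of $\Delta_p$ is essential. In the fractional linear case $\I_{2\alpha}=(-\Delta)^{-\alpha}$ is exact, so the integral solution \emph{is} the differential solution, and the only thing left to check is finite energy. Your approach is not wrong, just redundant; what it buys is a closer structural parallel to Section~3, at the cost of repeating the iteration and invoking a compactness step that the paper avoids entirely. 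The minimality arguments in both versions are essentially identical.
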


\begin{proof}
We first prove the existence of $w$. Since \eqref{condition_sigma_alpha_2} and \eqref{condition_mu_alpha_2} hold, then by Lemma \ref{relation_of_conditions}  it follows that  
\eqref{condition_f_alpha_2} holds. By Theorem \ref{thm_existence} in the case $p=2$, there exists a positive solution $w \in L^{1+q}(\R^n, d\sigma)$ to the integral equation 
\begin{equation}\label{int_eq_p=2}
w = \I_{2\alpha} (w^{q}d\sigma) + \I_{2\alpha}\mu \quad \text{in} \;\; \mathbb{R}^n.
\end{equation}
We will show that
\[
w \in L^{q}_{loc}(\R^n, d\sigma) \cap \dot{H}^{\alpha}(\R^n).
\]
Clearly, $w \in L^{q}_{loc}(\R^n, d\sigma)$ by H{\"o}lder's inequality.
In order to prove that $w \in \dot{H}^{\alpha}(\R^n)$, by duality, it suffices to show that there exists a positive constant $c$ such that 
\begin{equation}\label{est1_converse1}
\bigg\vert \int_{\R^n} w \psi \;dx \bigg\vert  
\leq c \Vert \psi \Vert_{\dot{H}^{-\alpha}(\R^n)},
\quad \psi \in C_{0}^{\infty}(\R^n).
\end{equation}
By the semigroup property of the Riesz potentials, 
Tonelli's Theorem and H{\"o}lder's inequality, we have 
\begin{equation}\label{est2_converse1}
\begin{split}
\bigg\vert \int_{\R^n} w\psi \;dx \bigg\vert 
&\leq \int_{\R^n}  \I_{\alpha}(w^q d\sigma)   \big| \I_{\alpha}\psi \big| \;dx 
+ \int_{\R^n} \I_{\alpha}\mu  \big| \I_{\alpha}\psi \big| \;dx \\
&\leq  \big\Vert \I_{\alpha}(w^q d\sigma) \big\Vert_{L^{2}(\R^n)}
\big\Vert \I_{\alpha}\psi \big\Vert_{L^{2}(\R^n)}
+ \big\Vert \I_{\alpha}\mu \big\Vert_{L^{2}(\R^n)}
\big\Vert \I_{\alpha}\psi \big\Vert_{L^{2}(\R^n)} \\
&= \left[ \big\Vert \I_{\alpha}(w^q d\sigma) \big\Vert_{L^{2}(\R^n)}
+ \Vert \mu \Vert_{\dot{H}^{-\alpha}(\R^n)} \right] \Vert \psi \Vert_{\dot{H}^{-\alpha}(\R^n)}
 \\
\end{split}
\end{equation}
for all $\psi \in C_{0}^{\infty}(\R^n)$. Since $\Vert \mu \Vert_{\dot{H}^{-\alpha}(\R^n)} < +\infty$, 
we see that, in view of \eqref{est1_converse1} and \eqref{est2_converse1}, it remains to show that
\begin{equation}\label{toshow}
\big\Vert \I_{\alpha}(w^q d\sigma) \big\Vert_{L^{2}(\R^n)} < \infty.
\end{equation}
To this end, notice that by the result in \cite{COV00}, \eqref{condition_sigma_alpha_2} is equivalent to 
\begin{equation}\label{condition_COV00_2}
\Vert \I_{\alpha}g \Vert_{L^{1+q}(\R^n, d\sigma)}
\leq c \Vert g \Vert_{L^{2}(\R^n)}, \quad \forall g \in L^{2}(\R^n),
\end{equation}
where $c$ is a positive constant independent of $g$. Moreover, by duality, \eqref{condition_COV00_2} is equivalent to
\begin{equation}\label{duality}
\Vert \I_{\alpha}(\varphi d\sigma) \Vert_{L^{2}(\R^n)}
\leq c \Vert \varphi \Vert_{L^{\frac{1+q}{q}}(\R^n, d\sigma)}, 
\quad \forall \varphi \in L^{\frac{1+q}{q}}(\R^n, d\sigma),
\end{equation}
where $c$ is a positive constant independent of $\varphi$. Letting $\varphi := w^q \in L^{\frac{1+q}{q}}(\R^n, d\sigma)$ in \eqref{duality}, we have 
\begin{equation}\label{converse1_est3}
 \big\Vert \I_{\alpha}(w^{q} d\sigma) \big\Vert_{L^{2}(\R^n)} 
\leq c \Vert w \Vert_{L^{1+q}(\R^n, d\sigma)}^{q} < +\infty,
\end{equation}
which proves \eqref{toshow}, and hence $w \in \dot{H}^{\alpha}(\R^n)$. Moreover, by \eqref{int_eq_p=2}, we have
\[
\left( - \Delta \right)^{\frac{\alpha}{2}}w = \I_{\alpha} (w^{q}d\sigma) + \I_{\alpha}\mu \quad a.e.
\]
This shows that $w$ is a positive finite energy solution to \eqref{main_eq_frac_laplacian}.

Minimality of the solution $w$ is obvious by its construction in Theorem \ref{thm_existence} in the case $p=2$. Recall that $w$ is the pointwise limit 
$
w = \lim_{j \rightarrow \infty} w_{j},
$
where 
\[
w_0 := {\I}_{2\alpha}\mu \qquad \text{and} \qquad w_{j+1} := {\I}_{2\alpha} (w^{q}_{j} d\sigma) + {\I}_{2\alpha}\mu, \quad \;\; j \in \mathbb{N}_{0}.
\]
If $u$ is any positive finite energy solution to \eqref{main_eq_frac_laplacian}. Then 
\[
w_{0} = \I_{2\alpha}\mu \leq \I_{2\alpha}(u^{q}d\sigma) + \I_{2\alpha}\mu = u \quad q.e.
\]
Consequently, 
\[
w_{1} =  \I_{2\alpha}(w_{0}^{q}d\sigma) + \I_{2\alpha}\mu \leq  \I_{2\alpha}(u^{q}d\sigma) + \I_{2\alpha}\mu = u \quad q.e.
\]
Arguing by induction, we obtain
\[
w_{j-1} \leq w_{j} \leq u \quad q.e. \quad \text{for all} \;\; j \in \N.
\]
Therefore, $ w = \lim_{j \rightarrow \infty} w_{j}  \leq u \;\; q.e.$ This proves the minimality of $w$.
\end{proof}
\section{Existence of a Positive Finite Energy Solution to Equation \eqref{main_eq_frac_laplacian_domain}}
Let $\Omega$ be a domain in $\R^n$ and let $G: \Omega \times \Omega \rightarrow (0, \infty]$ be a positive lower semicontinuous kernel. For $\nu \in \mathcal{M}^{+}(\Omega)$, the potential of $\nu$ is defined by 
\[
\g \nu (x) := \int_{\Omega} G(x,y) \;d\nu (y), \quad x \in \Omega.
\]

A positive kernel $G$ on $\Omega \times \Omega$ is said to satisfy the {\it weak maximum principle} (WMP) with constant $h \geq 1$ if for any $\nu \in \mathcal{M}^{+}(\Omega)$,
\begin{equation}\label{WMP}
\sup \{ \g\nu(x) : x \in \text{supp}(\nu) \} \leq M 
\Longrightarrow
\sup \{ \g\nu(x) : x \in \Omega \} \leq hM 
\end{equation}
for every constant $M > 0$. Here we use the notation supp$(\nu)$ for the support of $\nu \in \mathcal{M}^{+}(\Omega)$. 

When $h=1$ in \eqref{WMP}, the positive kernel $G$ is said to satisfy the {\it strong maximum principle}, which holds for positive Green's functions associated with the classical Laplacian $-\Delta$, and more generally the fractional Laplacian $(-\Delta)^{\alpha}$ in the case $0<\alpha\leq 1$, for every domain $\Omega \subset \R^n$ which possesses a positive Green's function. 

The WMP holds for Riesz kernels on $\R^n$ associated with $(-\Delta)^{\alpha}$ in the full range $0< \alpha < \frac{n}{2}$, and more generally for all radially nonincreasing kernels on $\R^n$ (see \cite{AH96}).

We say that a function $d(x,y): \Omega \times \Omega \rightarrow [0, \infty)$ satisfies the {\it quasimetric triangle inequality} with constant $\kappa >0$ if
\begin{equation}\label{quasimetric}
d(x,y) \leq \kappa \left[ d(x,z) + d(z,y) \right], \quad x, y, z \in \Omega.
\end{equation}

A positive kernel $G$ on $\Omega \times \Omega$ is {\it quasimetric} if $G$ is symmetric and the function $d(x,y) = \frac{1}{G(x,y)}$ satisfies \eqref{quasimetric}. The WMP holds for quasimetric kernels, see \cite{FNV14, FV, HN12, QV17}. We say that a positive kernel $G$ on $\Omega \times \Omega$ is {\it quasi-symmetric} if there exists a constant $a>0$ such that 
\[
a^{-1}G(y,x) \leq G(x,y) \leq a G(y,x), \quad x,y \in \Omega.
\]
There are many kernels associated with elliptic operators that are quasi-symmetric and satisfy the WMP (see \cite{Anc02}).

In this section, we establish necessary and sufficient conditions for the existence of a positive solution $u \in L^{1+q}(\Omega, d\sigma)$ to the integral equation
\begin{equation} \label{int_eq_domain}
u = \g(u^{q}d\sigma) + \g\mu \quad  d\sigma\text{-}a.e.
\end{equation}
where $0<q<1$ and $\sigma, \mu \in \mathcal{M}^{+}(\Omega)$, provided that $G$ is a quasi-symmetric kernel  which satisfies the WMP. 

If $G$ is Green's function associated with $-\Delta$ on $\Omega$, integral equation \eqref{int_eq_domain} is equivalent to the sublinear elliptic boundary value problem
\begin{equation} \label{main_prob_frac_laplacian_domain}
\begin{cases}
-\Delta u = \sigma u^{q} + \mu \quad \text{in} \;\; \Omega, \\
 u = 0  \qquad \qquad \quad \;\; \text{on} \;\; \partial\Omega.
\end{cases}
\end{equation}
As an application, we can deduce necessary and sufficient conditions for existence of a positive finite energy solution $u \in L^{q}_{loc}(\Omega, d\sigma) \cap \dot{W}^{1,2}_{0}(\Omega)$ to equation \eqref{main_eq_frac_laplacian_domain}.

We will need the following result proved in \cite{V17a}, which explicitly characterizes $(p,r)$-weighted norm inequalities
\begin{equation}\label{weighted_norm_ineq}
\big\| \g(f d\sigma) \big\|_{L^{r}(\Omega, d\sigma)} \leq C \| f \|_{L^{p}(\Omega, d\sigma)}, \quad \forall  f \in L^{p}(\Omega, d\sigma),
\end{equation}
where $C$ is a positive constant independent of $f$, in the case $0< r < p$ and $1< p < \infty$, under some mild assumptions on the kernel $G$.

\begin{Thm}[\cite{V17a}]\label{thm_I}
Let $\sigma \in \mathcal{M}^{+}(\Omega)$ and $G$ be a positive quasi-symmetric lower semicontinuous kernel on $\Omega \times \Omega$, which satisfies the WMP.
\begin{itemize}
\item[(i)] If $1 < p < \infty$ and $0 < r < p$, then the $(p,r)$-weighted norm inequality \eqref{weighted_norm_ineq} holds if and only if 
\begin{equation}\label{energy1}
\g\sigma \in L^{\frac{pr}{p-r}}(\Omega, d\sigma).
\end{equation}

\item[(ii)]If $0 < q < 1$ and $q < r < \infty$, then there exists a positive supersolution $u \in L^{r}(\Omega, d\sigma)$ to the homogeneous integral equation \eqref{int_eq_domain} with $\mu=0$, so that  
\begin{equation}\label{int_eq_homo_domain}
u \ge  \g(u^{q}d\sigma) \quad d\sigma\text{-}a.e.
\end{equation}
if and only if weighted norm inequality \eqref{weighted_norm_ineq} holds with $p = \frac{r}{q}$, that is, 
\begin{equation}\label{weighted_norm_ineq_special}
\big\| \g(f d\sigma) \big\|_{L^{r}(\Omega, d\sigma)} \leq C \| f \|_{L^{\frac{r}{q}}(\Omega, d\sigma)}, \quad \forall f \in L^{\frac{r}{q}}(\Omega, d\sigma),
\end{equation}
where $C$ is positive constant independent of $f$, or equivalently
\begin{equation}\label{energy2}
\g \sigma \in L^{\frac{r}{1-q}}(\Omega, d\sigma).
\end{equation}
\end{itemize}
\end{Thm}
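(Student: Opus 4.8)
The plan is to treat part (i) as the substantive assertion and to derive part (ii) from it together with two auxiliary ingredients. Granting part (i): for the reverse implication in (ii), condition \eqref{weighted_norm_ineq_special} is, by part (i) applied with the exponent $p=\frac{r}{q}>1$ (here $r>q$ is used), equivalent to \eqref{energy2}, i.e. $\g\sigma\in L^{\frac{r}{1-q}}(\Omega,d\sigma)$. Starting from \eqref{energy2} I would exhibit a positive supersolution. The natural candidate is $u:=c\,(\g\sigma)^{\frac{1}{1-q}}$, which lies in $L^{r}(\Omega,d\sigma)$ precisely by \eqref{energy2} and which is a supersolution once $\g\big((\g\sigma)^{\frac{q}{1-q}}d\sigma\big)\le C\,(\g\sigma)^{\frac{1}{1-q}}$ $d\sigma$-a.e.\ and $c$ is chosen so that $c^{q}C\le c$; this ``self-improving'' pointwise bound for potentials is a consequence of the weak maximum principle, in the spirit of the elementary estimate ``$\g\nu\le M$ on $\text{supp}\,\nu$ $\Rightarrow$ $\g(\g\nu\,d\nu)\le M^{2}$ on $\text{supp}\,\nu$, hence $\le hM^{2}$ on $\Omega$''. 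Should this explicit supersolution be inconvenient (e.g.\ because of the precise range of exponents), one can instead run the monotone iteration $\phi_{0}:=h^{q}$, $\phi_{j+1}:=\big(\g(\phi_{j}d\sigma)+h\big)^{q}$ for a fixed small positive $h\in L^{r}(\Omega,d\sigma)$ (say a small multiple of a suitable Green potential): since $q<1$, \eqref{weighted_norm_ineq_special} forces $\sup_{j}\|\phi_{j}\|_{L^{r/q}(d\sigma)}<\infty$ as soon as $\|h\|_{L^{r}(d\sigma)}$ is small, the sequence increases to a limit $\phi\ge h^{q}>0$, and $u:=\phi^{1/q}=\g(u^{q}d\sigma)+h$ is a positive supersolution in $L^{r}(\Omega,d\sigma)$.

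For the forward implication in (ii), from a positive supersolution $u\in L^{r}(\Omega,d\sigma)$ with $u\ge\g(u^{q}d\sigma)$ $d\sigma$-a.e.\ one first derives the Wolff-type lower bound $u\ge c\,(\g\sigma)^{\frac{1}{1-q}}$ $d\sigma$-a.e.; this is available for quasi-symmetric kernels satisfying the WMP (equivalently, quasimetric kernels) by the same iteration scheme that underlies Theorem~\ref{lower_ptwise_est}. Then $(\g\sigma)^{\frac{1}{1-q}}\in L^{r}(\Omega,d\sigma)$, which is \eqref{energy2}, hence \eqref{weighted_norm_ineq_special} by part (i). Thus (ii) is entirely reduced to part (i) plus these two facts.

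It remains to prove part (i). Necessity of \eqref{energy1} is the more routine direction: assuming \eqref{weighted_norm_ineq}, one tests it against $f=(\g\sigma)^{t}\chi_{E_{k}}$ along an exhaustion $E_{k}\uparrow\Omega$ with $\sigma(E_{k})<\infty$ and $\g\sigma$ bounded on $E_{k}$, for the appropriate exponent $t=t(p,r)$ tied to $s:=\frac{pr}{p-r}=\big(\tfrac1r-\tfrac1p\big)^{-1}$; combining Tonelli's theorem, the quasi-symmetry of $G$, H\"older's inequality and a self-improvement (bootstrap on the exponent) argument yields $\int_{E_{k}}(\g\sigma)^{s}\,d\sigma\le C$ uniformly in $k$, i.e.\ \eqref{energy1}. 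The heart of the matter, and the step I expect to be the main obstacle, is the sufficiency in part (i): $\g\sigma\in L^{s}(\Omega,d\sigma)$ implies \eqref{weighted_norm_ineq}. Here the full structure of $G$ must be used. The route I would take is to convert quasi-symmetry plus WMP into a quasimetric structure on $\Omega$ (as in \cite{FNV14}, \cite{HN12}, \cite{QV17}) and then into a dyadic one: there is a dyadic family $\{Q\}$ of subsets of $\Omega$ and weights $\lambda_{Q}$ so that $\g(f\,d\sigma)$ is pointwise comparable to the positive dyadic (tree) operator $f\mapsto\sum_{Q}\lambda_{Q}\big(\tfrac{1}{\sigma(Q)}\int_{Q}f\,d\sigma\big)\chi_{Q}$, the iterated-potential estimate above being exactly what makes such a discretization possible. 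This reduces \eqref{weighted_norm_ineq} to the weighted norm inequality for a positive dyadic operator in the off-diagonal range $0<r<p$, $1<p<\infty$, which is classical and is governed by a single (``Carleson-type'') summation condition — the discrete counterpart of \eqref{energy1} — rather than by a family of testing conditions, a feature special to $r<p$. Transferring the equivalence back to $\g$ completes part (i). The delicate points are the construction of the dyadic model with the correct control of the comparison constants (this is where quasi-symmetry and the WMP are both essential) and the bookkeeping that matches the discrete summation condition with \eqref{energy1}.
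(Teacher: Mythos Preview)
The paper does not contain a proof of Theorem~\ref{thm_I}. This theorem is quoted verbatim from \cite{V17a} and used as a black box (see the sentence ``We will need the following result proved in \cite{V17a}\ldots'' immediately preceding its statement), so there is no in-paper argument to compare your proposal against.

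For what it is worth, your outline is broadly in the spirit of \cite{V17a}: reducing (ii) to (i) via the lower bound $u\ge c\,(\g\sigma)^{1/(1-q)}$ for supersolutions and the explicit supersolution $c\,(\g\sigma)^{1/(1-q)}$, and attacking the hard direction of (i) through a Schur-type/testing argument that exploits the WMP. One caution: your reduction of part (i) to a dyadic positive operator via a quasimetric discretization is not how \cite{V17a} proceeds; that paper uses a direct Schur-test argument for integral operators under the WMP, and the existence of a dyadic model with the pointwise two-sided comparison you describe is not something you can take for granted for an arbitrary quasi-symmetric WMP kernel. If you intend to supply a self-contained proof rather than cite \cite{V17a}, that discretization step would need substantial justification.
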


The following theorem gives necessary and sufficient conditions for the existence of a positive solution $ u \in L^{1+q}(\Omega, d\sigma)$ to integral equation \eqref{int_eq_domain}. In fact, it is a more general version of Theorem \ref{thm_existence} in the linear case $p=2$.

\begin{Thm}\label{thm_existence_domain}
Let $0< q < 1$ and $\sigma, \mu \in \mathcal{M}^{+}(\Omega)$, and let $G$ be a positive quasi-symmetric lower semicontinuous kernel on $\Omega \times \Omega$, which satisfies the WMP.
Then there exists a positive solution $u \in L^{1+q}(\Omega, d\sigma)$ to integral equation \eqref{int_eq_domain} if and only if 
\eqref{condition1_domain} and \eqref{condition2_domain} hold.
\end{Thm}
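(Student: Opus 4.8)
The plan is to deduce both implications from Theorem~\ref{thm_I}, running the same iteration scheme as in the proof of Theorem~\ref{thm_existence} (which is essentially the present statement in the Riesz-kernel case $p=2$), with the operator bound of Lemma~\ref{bdd_operator} replaced by the weighted norm inequality of Theorem~\ref{thm_I}(i), and the lower pointwise estimate of Theorem~\ref{lower_ptwise_est} replaced by Theorem~\ref{thm_I}(ii).

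\textbf{Necessity.} Suppose $u \in L^{1+q}(\Omega, d\sigma)$ with $u > 0$ $d\sigma$-a.e.\ solves \eqref{int_eq_domain}. Since $G \geq 0$, discarding the term $\g\mu$ gives $u \geq \g(u^{q}d\sigma)$ $d\sigma$-a.e., so $u$ is a positive supersolution in $L^{1+q}(\Omega, d\sigma)$ to the homogeneous equation \eqref{int_eq_homo_domain}; as $1+q > q$, Theorem~\ref{thm_I}(ii) with $r = 1+q$ applies and, via \eqref{energy2}, yields $\g\sigma \in L^{\frac{1+q}{1-q}}(\Omega, d\sigma)$, i.e.\ \eqref{condition1_domain}. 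Discarding instead the term $\g(u^{q}d\sigma)$ gives $u \geq \g\mu$ $d\sigma$-a.e., whence $\g\mu \in L^{1+q}(\Omega, d\sigma)$ because $u \in L^{1+q}(\Omega, d\sigma)$; this is \eqref{condition2_domain}.

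\textbf{Sufficiency.} Assume \eqref{condition1_domain} and \eqref{condition2_domain}. Applying Theorem~\ref{thm_I}(i) with $p = \frac{1+q}{q}$ and $r = 1+q$, and noting the identity $\frac{pr}{p-r} = \frac{1+q}{1-q}$, condition \eqref{condition1_domain} is equivalent to the weighted norm inequality
\[
\big\| \g(g\,d\sigma) \big\|_{L^{1+q}(\Omega, d\sigma)} \leq c\, \| g \|_{L^{\frac{1+q}{q}}(\Omega, d\sigma)}, \qquad g \in L^{\frac{1+q}{q}}(\Omega, d\sigma),
\]
with $c$ independent of $g$. I would then form the iterates $u_{0} := \g\mu$ and $u_{j+1} := \g(u_{j}^{q}d\sigma) + \g\mu$. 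Exactly as in Theorem~\ref{thm_existence}, $\{u_{j}\}$ is pointwise nondecreasing and positive, and by induction each $u_{j} \in L^{1+q}(\Omega, d\sigma)$: indeed $u_{0} = \g\mu \in L^{1+q}(\Omega, d\sigma)$ by \eqref{condition2_domain}, and if $u_{j} \in L^{1+q}(\Omega, d\sigma)$ then $u_{j}^{q} \in L^{\frac{1+q}{q}}(\Omega, d\sigma)$, so Minkowski's inequality together with the displayed estimate (with $g = u_{j}^{q}$) and the monotonicity $u_{j} \leq u_{j+1}$ give
\[
\| u_{j+1} \|_{L^{1+q}(\Omega, d\sigma)} \leq c\, \| u_{j+1} \|_{L^{1+q}(\Omega, d\sigma)}^{q} + \| \g\mu \|_{L^{1+q}(\Omega, d\sigma)},
\]
and Young's inequality (here $q < 1$) yields the uniform bound $\| u_{j+1} \|_{L^{1+q}(\Omega, d\sigma)} \leq c^{\frac{1}{1-q}} + \frac{1}{1-q}\| \g\mu \|_{L^{1+q}(\Omega, d\sigma)}$. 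By the Monotone Convergence Theorem the pointwise limit $u := \lim_{j\to\infty} u_{j}$ satisfies $u > 0$, $u \in L^{1+q}(\Omega, d\sigma)$, and, using monotone convergence inside $\g$ (legitimate since $G \geq 0$ and $u_{j}^{q} \uparrow u^{q}$), $u = \g(u^{q}d\sigma) + \g\mu$ $d\sigma$-a.e., i.e.\ \eqref{int_eq_domain}.

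\textbf{Main obstacle.} There is no essentially new difficulty here: the substantive input (the characterization of \eqref{weighted_norm_ineq} and of the existence of supersolutions) is supplied by Theorem~\ref{thm_I}, and the iteration and its bounds are those of Theorem~\ref{thm_existence}. The only points demanding genuine care are the bookkeeping of the exponent identities so that Theorem~\ref{thm_I} is invoked with the correct parameters $p = \frac{1+q}{q}$, $r = 1+q$, and the justification of passing to the limit inside the potential $\g$, which is immediate from monotone convergence and $G \geq 0$.
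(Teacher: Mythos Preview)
Your proposal is correct and follows essentially the same route as the paper: necessity via Theorem~\ref{thm_I}(ii) with $r=1+q$ (plus the trivial observation $u\ge \g\mu$ for \eqref{condition2_domain}), and sufficiency by running the iteration of Theorem~\ref{thm_existence} with the weighted norm inequality supplied by Theorem~\ref{thm_I}. The only cosmetic difference is that you invoke Theorem~\ref{thm_I}(i) with $p=\tfrac{1+q}{q}$, $r=1+q$ to obtain the operator bound, whereas the paper cites Theorem~\ref{thm_I}(ii) (which already contains that same inequality as \eqref{weighted_norm_ineq_special}); the content is identical.
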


\begin{proof}
The sufficiency part is similar to the one of Theorem \ref{thm_existence} when $p=2$, proved by applying Theorem \ref{thm_I} (ii) in the case $r = q+1$ in place of Lemma \ref{bdd_operator}, and replacing Wolff's potentials by potential operators $\g$ associated with the kernel $G$. The necessity part follows immediately from Theorem \ref{thm_I} (ii) in the case $r = q+1$.
\end{proof}

We now apply the above result  to deduce necessary and sufficient conditions for the existence of a positive finite energy solution to equation \eqref{main_eq_frac_laplacian_domain}. As in previous sections, we first make the following observation regarding relation between conditions \eqref{condition1_domain}, \eqref{condition3_domain} and \eqref{condition2_domain}.

\begin{Lem} \label{relation_of_conditions_domain}
Let $0<q<1$ and $\sigma, \mu \in \mathcal{M}^{+}(\Omega)$, and let $G$ be a positive quasi-symmetric lower semicontinuous kernel on $\Omega \times \Omega$, which satisfies the WMP.
Then \eqref{condition1_domain} and 
\eqref{condition3_domain} imply \eqref{condition2_domain}.
\end{Lem}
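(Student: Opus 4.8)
The plan is to follow the pattern of Lemmas \ref{relation_condition_p-laplacian} and \ref{relation_of_conditions}, with the weighted norm inequality \eqref{weighted_norm_ineq_special0} for the potential $\g$ playing the role of the trace inequality for Riesz potentials. Two reformulations are at the heart of it. First, by \cite{V17a} (equivalently, Theorem \ref{thm_I}(ii) with $r=1+q$), condition \eqref{condition1_domain} is equivalent to
\[
\big\| \g(\varphi\, d\sigma) \big\|_{L^{1+q}(\Omega,\, d\sigma)} \le C\, \| \varphi \|_{L^{\frac{1+q}{q}}(\Omega,\, d\sigma)}, \qquad \varphi \in L^{\frac{1+q}{q}}(\Omega, d\sigma).
\]
Second, for $\mu \in \mathcal{M}^{+}(\Omega)$, condition \eqref{condition3_domain} is equivalent to the finiteness of the Green energy $\int_\Omega \g\mu\, d\mu < +\infty$ (the Green-potential counterpart of the characterization $\mu \in \dot{H}^{-\alpha}(\R^n)\Leftrightarrow\int_{\R^n}\I_{2\alpha}\mu\, d\mu<\infty$ recalled in Section~2); when $G$ is the Green's function of $-\Delta$ this is the standard fact that $\g\mu \in \dot{W}^{1,2}_0(\Omega)$ with $\|\nabla\g\mu\|_{L^2(\Omega)}^2 = \int_\Omega \g\mu\, d\mu = \|\mu\|_{\dot{W}^{-1,2}(\Omega)}^2$. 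The goal is then to deduce $\g\mu \in L^{1+q}(\Omega, d\sigma)$, which is \eqref{condition2_domain}.

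Since $\g\mu$ is lower semicontinuous, hence $d\sigma$-measurable, and $\big(\frac{1+q}{q}\big)' = 1+q$, it suffices by the duality between $L^{1+q}(\Omega,d\sigma)$ and $L^{\frac{1+q}{q}}(\Omega,d\sigma)$ to establish a bound $\int_\Omega \g\mu\, \varphi\, d\sigma \le C_\mu$ valid for every nonnegative $\varphi$ with $\|\varphi\|_{L^{(1+q)/q}(\Omega, d\sigma)} \le 1$ (one may first take $\varphi$ bounded with compact support and pass to the supremum by monotone convergence). By Tonelli's theorem and the quasi-symmetry of $G$ one has $\int_\Omega \g\mu\, \varphi\, d\sigma \le a \int_\Omega \g(\varphi\, d\sigma)\, d\mu$. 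I would then apply the Cauchy--Schwarz inequality for the mutual Green energy,
\[
\int_\Omega \g(\varphi\, d\sigma)\, d\mu \le \Big( \int_\Omega \g(\varphi\, d\sigma)\, \varphi\, d\sigma \Big)^{\frac12}\, \Big( \int_\Omega \g\mu\, d\mu \Big)^{\frac12},
\]
and estimate the two factors separately: $\int_\Omega \g\mu\, d\mu < \infty$ by \eqref{condition3_domain}, while, by Hölder's inequality and the displayed weighted norm inequality, $\int_\Omega \g(\varphi\, d\sigma)\, \varphi\, d\sigma \le \| \g(\varphi\, d\sigma)\|_{L^{1+q}(\Omega, d\sigma)}\, \|\varphi\|_{L^{(1+q)/q}(\Omega, d\sigma)} \le C\, \|\varphi\|_{L^{(1+q)/q}(\Omega, d\sigma)}^2 \le C$. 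Combining these yields $\int_\Omega \g\mu\, \varphi\, d\sigma \le a\, C^{1/2}\big( \int_\Omega \g\mu\, d\mu\big)^{1/2}$ uniformly in $\varphi$, whence $\g\mu \in L^{1+q}(\Omega, d\sigma)$; in fact one gets the quantitative bound $\|\g\mu\|_{L^{1+q}(\Omega, d\sigma)} \le a\, C^{1/2}\, \|\mu\|_{\dot{W}^{-1,2}(\Omega)}$.

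I expect the main obstacle to be the Cauchy--Schwarz step in the second display: it requires the Green energy $\nu \mapsto \int_\Omega \g\nu\, d\nu$ to be a positive definite quadratic form on measures of finite energy. This holds for the Green's function of $-\Delta$ — the setting of Theorem \ref{thm_main3} — and there the cleanest route is to rewrite $\int_\Omega \g(\varphi\, d\sigma)\, d\mu = \int_\Omega \nabla \g(\varphi\, d\sigma)\cdot \nabla \g\mu\, dx$ by integration by parts (legitimate because the finite-energy bound just obtained places $\g(\varphi\, d\sigma)$ in $\dot{W}^{1,2}_0(\Omega)$, and $\g\mu \in \dot{W}^{1,2}_0(\Omega)$ by \eqref{condition3_domain}) and then invoke the ordinary Cauchy--Schwarz inequality in $L^2(\Omega, dx)$, using $\|\nabla\g\nu\|_{L^2(\Omega)}^2 = \int_\Omega \g\nu\, d\nu$. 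The remaining points — the reduction to a dense class of test functions in the duality argument, and checking that $\g(\varphi\, d\sigma)$ has finite energy before the estimate is applied — are routine.
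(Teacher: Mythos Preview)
Your argument is correct and follows essentially the same route as the paper's proof: both use duality, switch the order of integration, bound $\int_\Omega \g(\varphi\, d\sigma)\, d\mu$ by $\|\g(\varphi\, d\sigma)\|_{\dot{W}^{1,2}_0(\Omega)}\,\|\mu\|_{\dot{W}^{-1,2}(\Omega)}$ (your energy Cauchy--Schwarz is exactly this inequality, once one writes the pairing as $\int_\Omega \nabla\g(\varphi\,d\sigma)\cdot\nabla\g\mu\,dx$ via Brezis--Browder), and then control the first factor through H\"older and the weighted norm inequality equivalent to \eqref{condition1_domain}. You are also right that the argument as written---in your version and in the paper's---is specific to the Green's function of $-\Delta$ rather than a general quasi-symmetric WMP kernel, which is all that is needed for Theorem~\ref{thm_main3}.
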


\begin{proof}
By Theorem \ref{thm_I} (ii) with $r=1+q$, \eqref{condition1_domain} holds if and only if there exists a constant $C$ such that
\begin{equation}\label{weighted_norm_ineq_special2.1}
\big\| \g(f d\sigma) \big\|_{L^{1+q}(\Omega, d\sigma)} \leq C \| f \|_{L^{\frac{1+q}{q}}(\Omega, d\sigma)}, \quad \forall f \in L^{\frac{1+q}{q}}(\Omega, d\sigma).
\end{equation}
Suppose $f$ is any nonnegative bounded measurable function with compact support in $\Omega$. Applying H\"{o}lder's inequality and the weighted norm inequality \eqref{weighted_norm_ineq_special2.1} we have
\begin{equation}\label{est1.1}
\begin{split}
\big\| \g\left( f d\sigma \right) \big\|_{\dot{W}^{1,2}_{0}(\Omega)}^{2}
&= \int_{\Omega} \big| \nabla \g(f d\sigma)\big|^{2}\;dx \\
&= \int_{\Omega} \g(f d\sigma) \cdot f  \;d\sigma \\
&\leq \big\| \g(fd\sigma) \big\|_{L^{1+q}(\Omega,\;d\sigma)}
\big\| f \big\|_{L^{\frac{1+q}{q}}(\Omega,\;d\sigma)}
\\
&\leq  C\big\| f \big\|_{L^{\frac{1+q}{q}}(\Omega,\;d\sigma)}^{2}.
\end{split}
\end{equation}
Since $\mu \in \dot{W}^{-1,2}(\Omega)$, by Tonelli's Theorem and Brezis-Browder theorem (Theorem \ref{representative_thm}), we obtain
\begin{equation}\label{est1.11}
\begin{split}
\bigg| \int_{\Omega} \left( \g\mu \right)f \;d\sigma \bigg| 
&= \bigg| \int_{\Omega} \g \left( f d\sigma \right) \;d\mu \bigg|  \\
&= \Big| \langle \g(fd\sigma), \mu \rangle \Big| \\
&\leq \big\| \g (f d\sigma) \big\|_{\dot{W}^{1,2}_{0}(\Omega)} \big\| \mu \big\|_{\dot{W}^{-1,2}(\Omega)} \\
&\leq c \big\| f \big\|_{L^{\frac{1+q}{q}}(\Omega,\;d\sigma)} \big\| \mu \big\|_{\dot{W}^{-1,2}(\Omega)}.
\end{split}
\end{equation}
Applying a standard density argument, we see that \eqref{est1.11} actually holds for all $f \in L^{\frac{1+q}{q}}(\Omega, d\sigma)$. By duality, taking the supremum over all $f \in L^{\frac{1+q}{q}}(\Omega, d\sigma)$, we get 
\[
\left( \int_{\Omega} \left( \g \mu \right)^{1+q}\;d\sigma \right)^{\frac{1}{1+q}} \leq c \| \mu \|_{\dot{W}^{-1,2}(\Omega)} < +\infty,
\]
which proves the lemma.
\end{proof}

The next lemma shows in particular that conditions \eqref{condition1_domain} and \eqref{condition3_domain} are neccessary for the  existence of a positive finite energy solution to equation \eqref{main_eq_frac_laplacian_domain}.

\begin{Lem} \label{lemma_converse_2_domain}
Let $0<q<1$ and $\sigma, \mu \in \mathcal{M}^{+}(\Omega)$, and let $G$ be Green's function associated with $-\Delta$ on $\Omega$. Suppose there exists a positive supersolution $u \in L^{q}_{loc}(\Omega, d\sigma) \cap \dot{W}^{1,2}_{0}(\Omega)$ to equation \eqref{main_eq_frac_laplacian_domain}. Then
\[
-\Delta u \in \dot{W}^{-1,2}(\Omega) \cap \mathcal{M}^{+}(\Omega),
\]
and hence \eqref{condition3_domain} holds. Moreover, 
$u \in L^{1+q}(\Omega, d\sigma)$ for a quasicontinuous representative of $u$, and consequently \eqref{condition1_domain} holds as well.
\end{Lem}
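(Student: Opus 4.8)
The plan is to run, in the present setting, the argument used for Lemma~\ref{converse_existence_thm_inhomo} on $\R^n$, with the Green potential $\g$ and Theorem~\ref{thm_I}(ii) playing the roles of Wolff potentials and the results of \cite{CV14a}, and with the Brezis--Browder identity (Theorem~\ref{representative_thm}) applied with $p=2$. First I would unwind the hypothesis: since $u\in\dot{W}^{1,2}_{0}(\Omega)\subset W^{1,2}_{loc}(\Omega)$, the generalized gradient of $u$ is the ordinary one, and being a supersolution to \eqref{main_eq_frac_laplacian_domain} means $\langle-\Delta u,\varphi\rangle=\int_{\Omega}\nabla u\cdot\nabla\varphi\,dx\ge\int_{\Omega}u^{q}\varphi\,d\sigma+\int_{\Omega}\varphi\,d\mu\ge0$ for all $0\le\varphi\in C_{0}^{\infty}(\Omega)$. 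Hence $-\Delta u$ is a nonnegative distribution, so by the Riesz--Schwartz theorem it is represented by a measure $\omega:=-\Delta u\in\mathcal{M}^{+}(\Omega)$ with $\omega\ge u^{q}\sigma+\mu$ as measures, and the bound $|\langle-\Delta u,\varphi\rangle|\le\|\nabla u\|_{L^{2}(\Omega)}\,\|\nabla\varphi\|_{L^{2}(\Omega)}$ shows $\omega\in\dot{W}^{-1,2}(\Omega)$; this is the first assertion.

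Next I would use monotonicity of energy: for nonnegative measures, $0\le\nu_{1}\le\nu_{2}$ implies $\g\nu_{1}\le\g\nu_{2}$, hence $\int_{\Omega}\g\nu_{1}\,d\nu_{1}\le\int_{\Omega}\g\nu_{2}\,d\nu_{2}$, while $\int_{\Omega}\g\omega\,d\omega<\infty$ because $\omega$ has finite energy. Applied to $0\le\mu\le\omega$ this gives $\mu\in\dot{W}^{-1,2}(\Omega)$, that is, \eqref{condition3_domain}; applied to $0\le u^{q}\sigma\le\omega$ it shows $u^{q}\sigma$ has finite energy, so $u^{q}\sigma$ charges no set of zero $(1,2)$-capacity. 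Since the (lower semicontinuous, superharmonic) representative of $u$ is strictly positive throughout $\Omega$ --- a nonnegative superharmonic function on a domain which is not identically zero --- it follows that $\sigma$ too charges no set of zero $(1,2)$-capacity, so that the quasicontinuous representative of $u$ is unambiguously defined $d\sigma$-a.e.

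For the bound $u\in L^{1+q}(\Omega,d\sigma)$ I would apply Theorem~\ref{representative_thm} with $p=2$ to $\omega\in\dot{W}^{-1,2}(\Omega)\cap\mathcal{M}^{+}(\Omega)$ and $u\in\dot{W}^{1,2}_{0}(\Omega)$: for the quasicontinuous representative, $u\in L^{1}(\Omega,d\omega)$ and $\int_{\Omega}u\,d\omega=\langle\omega,u\rangle=\int_{\Omega}|\nabla u|^{2}\,dx<\infty$, so $u^{q}\sigma\le\omega$ and $u\ge0$ yield $\int_{\Omega}u^{1+q}\,d\sigma=\int_{\Omega}u\,d(u^{q}\sigma)\le\int_{\Omega}u\,d\omega<\infty$. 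It remains to derive \eqref{condition1_domain}. Since $u^{q}\sigma$ has finite energy, $v:=\g(u^{q}d\sigma)\in\dot{W}^{1,2}_{0}(\Omega)$ and $-\Delta v=u^{q}\sigma$; then $w:=u-v\in\dot{W}^{1,2}_{0}(\Omega)$ satisfies $-\Delta w=\omega-u^{q}\sigma\ge0$, and testing this against $w^{-}=\max(-w,0)\in\dot{W}^{1,2}_{0}(\Omega)$ (admissible by density of nonnegative functions from $C_{0}^{\infty}(\Omega)$) forces $\int_{\Omega}|\nabla w^{-}|^{2}\,dx\le0$, whence $w\ge0$ and $u\ge\g(u^{q}d\sigma)$ $d\sigma$-a.e. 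Thus $u$ is a positive supersolution in $L^{1+q}(\Omega,d\sigma)$ to the homogeneous integral inequality, and as Green's function of $-\Delta$ on $\Omega$ is symmetric, lower semicontinuous, positive and satisfies the strong maximum principle (hence the WMP), Theorem~\ref{thm_I}(ii) with $r=1+q$ gives \eqref{condition1_domain}.

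The only delicate points are the bookkeeping with quasicontinuous (superharmonic) representatives, needed so that the Brezis--Browder identity applies and $\int_{\Omega}u^{1+q}\,d\sigma$ is meaningful, and the comparison $u\ge\g(u^{q}d\sigma)$, which relies on the finite energy of $u^{q}\sigma$ extracted in the second step; everything else is a routine transcription of the Euclidean argument.
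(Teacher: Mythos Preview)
Your proof is correct and follows essentially the same route as the paper's: establish $-\Delta u\in\dot W^{-1,2}(\Omega)\cap\mathcal{M}^{+}(\Omega)$ by Cauchy--Schwarz and the supersolution inequality, extract $\mu\in\dot W^{-1,2}(\Omega)$ and $u^{q}\sigma\in\dot W^{-1,2}(\Omega)$ from the domination $u^{q}\sigma+\mu\le\omega$, and use the Brezis--Browder identity (Theorem~\ref{representative_thm}) to bound $\int_{\Omega}u^{1+q}\,d\sigma$ by the Dirichlet energy.

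The only noteworthy difference is in the last step. The paper concludes \eqref{condition1_domain} by a bare reference to Theorem~\ref{thm_existence_domain}, implicitly using that $u$ is a supersolution of the integral equation; you instead make this explicit by setting $v=\g(u^{q}d\sigma)$, testing $-\Delta(u-v)\ge 0$ against $(u-v)^{-}$ to obtain $u\ge\g(u^{q}d\sigma)$, and then invoking Theorem~\ref{thm_I}(ii) with $r=1+q$. Your argument is a bit more self-contained here, and your justification that $\sigma$ charges no sets of zero capacity (via strict positivity of the superharmonic representative of $u$ and finite energy of $u^{q}\sigma$) is a useful detail that the paper leaves tacit.
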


\begin{proof}
By Schwarz's inequality, for every $\varphi \in C^{\infty}_{0}(\Omega)$ we have 
\[
\big| \langle -\Delta u, \varphi \rangle \big| 
= \bigg| \int_{\Omega} \nabla u \cdot \nabla\varphi \;dx  \bigg|
\leq \| \nabla u \|_{L^{2}(\Omega)} \| \nabla \varphi \|_{L^{2}(\Omega)}.
\]
Hence, $-\Delta u \in \dot{W}^{-1,2}(\Omega)$. Moreover, for every nonnegative $\varphi \in C^{\infty}_{0}(\Omega)$ we have 
\[ 
\langle -\Delta u, \varphi \rangle
=  \int_{\Omega} \nabla u \cdot \nabla\varphi \;dx
\geq \int_{\Omega} u^{q}\varphi \;d\sigma + \int_{\Omega} \varphi \;d\mu 
\geq 0.
\]
This shows that $-\Delta u \in \mathcal{M}^{+}(\Omega)$, from which it follows that \eqref{condition3_domain} holds, and 
\[
d\nu := u^{q}d\sigma \in \dot{W}^{-1,2}(\Omega) \cap \mathcal{M}^{+}(\Omega).
\]
Let $\lbrace \varphi_{j} \rbrace_{1}^{\infty} \subset C_{0}^{\infty}(\Omega)$ be a sequence of nonnegative functions such that $\varphi_{j} \rightarrow u $ in $\dot{W}^{1,2}_{0}(\Omega)$ as $j \rightarrow \infty$. Then
\[
\langle \nu, \varphi_{j} \rangle \leq \int_{\Omega} \nabla u \cdot \nabla \varphi_{j} \;dx \quad \text{for all}\;\; j \in \N.
\]
Hence, 
\[
\langle \nu, u \rangle 
= \lim_{j \rightarrow \infty} \langle \nu, \varphi_{j} \rangle 
\leq \lim_{j \rightarrow \infty} \int_{\Omega} \nabla u \cdot \nabla \varphi_{j} \;dx
=\int_{\Omega} | \nabla u |^{2} \;dx < +\infty.
\]
Applying the Brezis-Browder theorem (Theorem \ref{representative_thm}), for a quasicontinuous representative of $u$, we have 
\[
\int_{\Omega} u^{1+q} \;d\sigma = \int_{\Omega} u \;d\nu = \langle \nu, u \rangle  < +\infty.
\]
Hence, $u \in L^{1+q}(\Omega, d\sigma)$. Consequently, by Theorem \ref{thm_existence_domain}, it follows that \eqref{condition1_domain} holds.
\end{proof}

Finally, if $G$ is Green's function associated with $-\Delta$ on $\Omega$, the next lemma shows in particular that conditions \eqref{condition1_domain} and \eqref{condition3_domain} are sufficient for the existence of a minimal positive finite energy solution to equation \eqref{main_eq_frac_laplacian_domain}.

\begin{Lem} \label{thm_converse_1_domain}
Let $0<q<1$ and $\sigma, \mu \in \mathcal{M}^{+}(\Omega)$, and let $G$ be a positive quasi-symmetric lower semicontinuous kernel on $\Omega \times \Omega$, which satisfies the WMP. Suppose
\eqref{condition1_domain} and \eqref{condition3_domain} hold.
Then there exists a positive solution $w \in L^{q}_{loc}(\Omega, d\sigma)\cap \dot{W}^{1,2}_{0}(\Omega)$ to integral equation \eqref{int_eq_domain}. Moreover, $w$ is a minimal positive solution in the sense that $w \leq u$ q.e. for any positive solution $u \in L^{q}_{loc}(\Omega, d\sigma)\cap \dot{W}^{1,2}_{0}(\Omega)$ to \eqref{int_eq_domain}.
\end{Lem}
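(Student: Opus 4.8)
The plan is to repeat the argument of Theorem~\ref{thm_converse_1}, with the Green potential $\g$ replacing the Riesz potential $\I_{2\alpha}$, Theorem~\ref{thm_existence_domain} replacing Theorem~\ref{thm_existence}, and the weighted norm inequality \eqref{weighted_norm_ineq_special2.1} playing the role of the trace inequality. First, since \eqref{condition1_domain} and \eqref{condition3_domain} hold, Lemma~\ref{relation_of_conditions_domain} yields \eqref{condition2_domain}, that is, $\g\mu\in L^{1+q}(\Omega,d\sigma)$. Conditions \eqref{condition1_domain} and \eqref{condition2_domain} then let us invoke Theorem~\ref{thm_existence_domain} to produce a positive solution $w\in L^{1+q}(\Omega,d\sigma)$ of \eqref{int_eq_domain}; as in the proofs of Theorems~\ref{thm_existence} and~\ref{thm_existence_domain}, this $w$ is obtained as the pointwise increasing limit of the iterates $w_0:=\g\mu$ and $w_{j+1}:=\g(w_j^q\,d\sigma)+\g\mu$, $j\in\N$. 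Since $q<1+q$ and $\sigma$ is locally finite, H\"older's inequality on compact subsets of $\Omega$ immediately gives $w\in L^q_{loc}(\Omega,d\sigma)$.

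The substantive point is to verify that $w\in\dot W^{1,2}_0(\Omega)$; here one uses that $G$ is Green's function of $-\Delta$ on $\Omega$, so that $\g=(-\Delta)^{-1}$ with zero Dirichlet data. Write $w=\g(w^q\,d\sigma)+\g\mu$ and estimate the two terms by Minkowski's inequality in $\dot W^{1,2}_0(\Omega)$. Since $\mu\in\dot W^{-1,2}(\Omega)$, one has $\g\mu\in\dot W^{1,2}_0(\Omega)$ with $\|\g\mu\|_{\dot W^{1,2}_0(\Omega)}^2=\|\mu\|_{\dot W^{-1,2}(\Omega)}^2<\infty$. For the other term, $w\in L^{1+q}(\Omega,d\sigma)$ forces $w^q\in L^{\frac{1+q}{q}}(\Omega,d\sigma)$, and the computation \eqref{est1.1}---the energy identity $\|\g(f\,d\sigma)\|_{\dot W^{1,2}_0(\Omega)}^2=\int_\Omega\g(f\,d\sigma)\,f\,d\sigma$, followed by H\"older's inequality and \eqref{weighted_norm_ineq_special2.1}---yields $\|\g(w^q\,d\sigma)\|_{\dot W^{1,2}_0(\Omega)}^2\le C\|w\|_{L^{1+q}(\Omega,d\sigma)}^{2q}<\infty$, hence $w\in\dot W^{1,2}_0(\Omega)$. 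The one delicate step is that \eqref{est1.1} was established only for bounded $f$ of compact support, whereas we need it for $f=w^q$: I would handle this by truncation, setting $f_k:=\min(w^q,k)\,\chi_{\Omega_k}$ for an exhaustion $\Omega_k\uparrow\Omega$, noting that $\g(f_k\,d\sigma)\uparrow\g(w^q\,d\sigma)$ pointwise while $\|\g(f_k\,d\sigma)\|_{\dot W^{1,2}_0(\Omega)}\le C\|w\|_{L^{1+q}(\Omega,d\sigma)}^{q}$ uniformly in $k$, and then passing to the limit via the weak compactness of bounded sets in the Hilbert space $\dot W^{1,2}_0(\Omega)$, which identifies the weak limit with $\g(w^q\,d\sigma)$ and preserves the bound. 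This regularity upgrade from $L^{1+q}(\Omega,d\sigma)$ to $\dot W^{1,2}_0(\Omega)$ (and the incidental fact $w^q\,d\sigma\in\dot W^{-1,2}(\Omega)$) is the step I expect to be the main obstacle. It completes the existence part: $w\in L^q_{loc}(\Omega,d\sigma)\cap\dot W^{1,2}_0(\Omega)$ is a positive solution of \eqref{int_eq_domain}.

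Finally, minimality follows exactly as in Theorem~\ref{thm_converse_1}. Let $u\in L^q_{loc}(\Omega,d\sigma)\cap\dot W^{1,2}_0(\Omega)$ be any positive solution of \eqref{int_eq_domain}. Since $u\in\dot W^{1,2}_0(\Omega)$, the identity $u=\g(u^q\,d\sigma)+\g\mu$ holds not merely $d\sigma$-a.e.\ but q.e.\ for the quasicontinuous representative of $u$ (argued as in Remark~\ref{Rem_Def_2}), so in particular $u\ge\g\mu=w_0$ q.e. If $u\ge w_j$ q.e., then monotonicity of $\g$ gives $\g(w_j^q\,d\sigma)\le\g(u^q\,d\sigma)$, whence $w_{j+1}=\g(w_j^q\,d\sigma)+\g\mu\le u$ q.e.; by induction $w_{j-1}\le w_j\le u$ q.e.\ for every $j$, and letting $j\to\infty$ we conclude $w=\lim_j w_j\le u$ q.e. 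Thus $w$ is a minimal positive solution, which completes the proof.
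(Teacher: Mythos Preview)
Your proposal is correct and follows essentially the same approach as the paper's proof: invoke Lemma~\ref{relation_of_conditions_domain} to obtain \eqref{condition2_domain}, apply Theorem~\ref{thm_existence_domain} to get $w\in L^{1+q}(\Omega,d\sigma)$, use H\"older for $L^q_{loc}$, then upgrade to $\dot W^{1,2}_0(\Omega)$ via the energy estimate \eqref{est1.1} together with $\|\g\mu\|_{\dot W^{1,2}_0}=\|\mu\|_{\dot W^{-1,2}}$, and deduce minimality from the iterative construction. You supply more detail than the paper does---the truncation/weak-compactness argument to extend \eqref{est1.1} from bounded compactly supported $f$ to $f=w^q$, and the explicit induction for minimality with attention to the q.e.\ versus $d\sigma$-a.e.\ distinction---but the route is the same.
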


\begin{proof}
Since \eqref{condition1_domain} and \eqref{condition3_domain} hold, then by Lemma \ref{relation_of_conditions_domain}, it follows that \eqref{condition2_domain} holds. In view of Theorem \ref{thm_existence_domain}, there exists a positive solution $w \in L^{1+q}(\Omega, d\sigma)$ to \eqref{int_eq_domain}. Obviously, $w \in L^{q}_{loc}(\Omega, d\sigma)$ by H\"{o}lder's inequality. We will show that $w \in \dot{W}^{1,2}_{0}(\Omega)$. By \eqref{est1.1} in the proof of Lemma \ref{relation_of_conditions_domain}, we observe that 
\begin{equation}\label{est1.2}
\big\| \g\left( w^q d\sigma \right) \big\|_{\dot{W}^{1,2}_{0}(\Omega)}
\leq  c \big\| w \big\|_{L^{1+q}(\Omega,\;d\sigma)} 
< +\infty.
\end{equation}
Hence
\[
\begin{split}
\big\| w  \big\|_{\dot{W}^{1,2}_{0}(\Omega)}
&\leq \big\| \g\left( w^q d\sigma \right) \big\|_{\dot{W}^{1,2}_{0}(\Omega)} 
+ \big\| \g\mu \big\|_{\dot{W}^{1,2}_{0}(\Omega)} \\
&= \big\| \g\left( w^q d\sigma \right) \big\|_{\dot{W}^{1,2}_{0}(\Omega)} 
+ \big\| \mu \big\|_{\dot{W}^{-1,2}(\Omega)} \\
&< +\infty,
\end{split}
\]
which proves the lemma. As in the proof of Theorem \ref{thm_converse_1}, minimality of $w$ follows immediately from its construction in Theorem \ref{thm_existence_domain}.
\end{proof}
\section{Uniqueness}
In this section, we establish the uniqueness of positive finite energy solutions to equations \eqref{main_eq_p-lapacain}, \eqref{main_eq_frac_laplacian} and \eqref{main_eq_frac_laplacian_domain}, using the idea used in \cite{CV14a}, namely employing convexity properties of Dirichlet integrals and minimality of such solutions.

\begin{Thm}\label{uniqueness_p-Laplacian}
Let $1<p<n$, $0<q<p-1$ and $\sigma, \mu \in \mathcal{M}^{+}(\mathbb{R}^n)$. Suppose there exists a positive finite energy solution to equation \eqref{main_eq_p-lapacain}. Then such a solution is unique in $\dot{W}^{1,p}_{0}(\R^n)$.
\end{Thm}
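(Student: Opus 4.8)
The plan is to combine the minimality result of Theorem~\ref{thm_main} with a convexity argument for the Dirichlet integral, essentially following the strategy of \cite{CV14a} for the homogeneous case. Let $w$ be the minimal positive finite energy solution constructed in Theorem~\ref{thm_main}, and suppose $u$ is an arbitrary positive finite energy solution to \eqref{main_eq_p-lapacain}. By minimality we know $w \le u$ q.e., so it suffices to show $u \le w$ q.e., or equivalently that $u = w$. The idea is to test the two equations against a suitable combination of $u$ and $w$ and exploit strict convexity.

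First I would record that, by Lemma~\ref{converse_existence_thm_inhomo}, both $u$ and $w$ belong to $L^{1+q}(\R^n, d\sigma)$, that $u^q d\sigma + \mu$ and $w^q d\sigma + \mu$ lie in $W^{-1,p'}(\R^n) \cap \mathcal{M}^{+}(\R^n)$, and that by Theorem~\ref{representative_thm} (Brezis--Browder) the pairings $\langle u^q d\sigma + \mu, \varphi\rangle$ can be computed as integrals for quasicontinuous representatives and arbitrary $\varphi \in \dot{W}^{1,p}_0(\R^n)$. The key algebraic input is the convexity of the functional
\[
v \longmapsto \int_{\R^n} |\nabla v|^p \, dx
\]
along the curve $v_t = \bigl((1-t) w^p + t\, u^p\bigr)^{1/p}$, or more precisely the Díaz--Saá type inequality: for positive $u, w$ in $\dot{W}^{1,p}_0(\R^n)$,
\[
\int_{\R^n} \left( \frac{-\Delta_p w}{w^{p-1}} - \frac{-\Delta_p u}{u^{p-1}} \right) \bigl( w^p - u^p \bigr) \, dx \le 0,
\]
with equality if and only if $u = c\,w$ for some constant $c$. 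Since $-\Delta_p w = w^q d\sigma + \mu$ and $-\Delta_p u = u^q d\sigma + \mu$, substituting gives
\[
\int_{\R^n} \left( \frac{w^q d\sigma + \mu}{w^{p-1}} - \frac{u^q d\sigma + \mu}{u^{p-1}} \right)(w^p - u^p) \le 0.
\]
The $\mu$-terms contribute $\int (w^{1-p} - u^{1-p})(w^p - u^p)\, d\mu$, which is $\le 0$ pointwise since $t \mapsto t^{1-p}$ and $t \mapsto t^p$ are oppositely monotone; the $\sigma$-terms contribute $\int (w^{q+1-p} - u^{q+1-p})(w^p - u^p)\, d\sigma$, and because $q < p - 1$ the exponent $q+1-p$ is negative, so this integrand is also $\le 0$ pointwise. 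Hence both integrals vanish, forcing $w^{q+1-p} = u^{q+1-p}$ $d\sigma$-a.e.\ (so $u = w$ $d\sigma$-a.e.) and similarly $u = w$ $d\mu$-a.e. Then $u$ and $w$ solve the same equation $-\Delta_p v = u^q d\sigma + \mu$ with the same right-hand side, so by uniqueness of finite energy solutions to the linear-in-measure problem (Lemma~\ref{weak_comparison_principle} or Remark~\ref{Rem0}) we conclude $u = w$ q.e.

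The main obstacle is making the Díaz--Saá inequality rigorous in this low-regularity, unbounded-domain setting: one must justify that $w^p - u^p$ (suitably truncated) is an admissible test function in $\dot{W}^{1,p}_0(\R^n)$, that the pairings with the measures $u^q d\sigma + \mu$ and $w^q d\sigma + \mu$ are finite and equal the corresponding integrals (this is where Theorem~\ref{representative_thm} and the finiteness $u, w \in L^{1+q}(d\sigma)$ enter), and that passing to the limit in a truncation $\min(w,k)^p - \min(u,k)^p$ or in the ratio test functions $\tfrac{w^p - u^p}{w^{p-1}}$, $\tfrac{u^p - w^p}{u^{p-1}}$ is legitimate. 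A clean way to avoid delicate truncations is to use the minimality $w \le u$ directly: with $0 \le w \le u$ the test functions $\tfrac{u^p - w^p}{u^{p-1}}$ and $\tfrac{w^p - u^p}{w^{p-1}}$ can be controlled, and the hidden convexity identity
\[
|\nabla w|^{p-2}\nabla w \cdot \nabla\!\left(\tfrac{w^p - u^p}{w^{p-1}}\right) + |\nabla u|^{p-2}\nabla u \cdot \nabla\!\left(\tfrac{u^p - w^p}{u^{p-1}}\right) \ge 0
\]
(a consequence of the convexity of $\xi \mapsto |\xi|^p$) yields the desired sign. I would cite the relevant convexity lemma from \cite{CV14a} and adapt its proof, inserting the extra $\mu$-terms, which behave favorably and require no new ideas.
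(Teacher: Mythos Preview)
Your proposal is correct and follows essentially the same strategy as the paper: use the minimality of $w$ from Theorem~\ref{thm_main} to get $w\le u$ q.e., apply a convexity/Picone-type argument to force $u=w$ $d\sigma$-a.e.\ (and $d\mu$-a.e.), and then invoke uniqueness for $-\Delta_p v=\omega$ with $\omega\in W^{-1,p'}\cap\mathcal{M}^+$ (Remark~\ref{Rem0}) to conclude $u=w$ in $\dot W^{1,p}_0$.

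The only notable difference is in how the convexity step is implemented. You formulate it as a D\'{\i}az--Sa\'a inequality with the test functions $\tfrac{u^p-w^p}{u^{p-1}}$ and $\tfrac{w^p-u^p}{w^{p-1}}$, and correctly flag the admissibility of these as the main technical obstacle. The paper avoids precisely this difficulty by working instead with the curve $\lambda_t=\big((1-t)w^p+tu^p\big)^{1/p}$: it tests the equation for $w$ with $\psi=\lambda_t-\lambda_0\in\dot W^{1,p}_0$, uses the convexity inequality from \cite{BF14} in the form $\int|\nabla\lambda_t|^p\le(1-t)\int|\nabla w|^p+t\int|\nabla u|^p$, divides by $t$, and passes to the limit $t\to 0$ via Fatou (legitimate because $\lambda_t\ge\lambda_0$ under the ordering $u\ge w$). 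This yields exactly the integral inequality you are after, namely $\int\big(\tfrac{u^p v^q}{v^{p-1}}-u^{1+q}\big)d\sigma+\int\big(\tfrac{u^p}{v^{p-1}}-u\big)d\mu\le 0$, without ever needing $\tfrac{u^p}{w^{p-1}}$ itself to lie in $\dot W^{1,p}_0$. So the paper's route is a cleaner realization of the same idea and resolves the obstacle you identified.
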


\begin{proof}
Suppose $u$ and $v$ are positive finite energy solutions to \eqref{main_eq_p-lapacain}. We start  
with the following  two observations.
We first claim that
\begin{center}
if $u=v$ $d\sigma$-a.e. then $u=v$ as elements of $\dot{W}_{0}^{1,p}(\R^n)$.
\end{center}
To see this, suppose $u=v$ $d\sigma$-a.e., and set 
\[
d\omega := u^{q}d\sigma + d\mu = v^{q}d\sigma + d\mu.
\]
Then, $\omega \in \mathcal{M}^{+}(\R^n)$ and 
\begin{equation}\label{eq1_uniqueness_2}
-\Delta_{p}u = -\Delta_{p}v = \omega \quad \text{in} \;\; \R^n.
\end{equation}
As usual, we may consider quasicontinuous representatives of $u$ and $v$. Then, by Lemma \ref{converse_existence_thm_inhomo},
\[
u,v \in L^{1+q}(\R^n, d\sigma)  \quad \text{and} \quad \omega \in W^{-1,p'}(\R^n) .
\] 
As discussed in Remark \ref{Rem0}, for such a measure $\omega$, a solution $u \in \dot{W}_{0}^{1,p}(\R^n)$ to the equation $-\Delta_{p}u = \omega$\, in $\R^n$, is unique. Hence, $u=v$ q.e., so they coincide as elements of $\dot{W}_{0}^{1,p}(\R^n)$.

Secondly, we claim that
\begin{center}
if $u \geq v$ q.e. then $u=v$ $d\sigma$-a.e. 
\end{center}
Suppose $u \geq v$ q.e. then $u \geq v$ $d\sigma$-a.e. and $u \geq v$ $d\mu$-a.e., since $\sigma$ and $\mu$ are absolutely continuous with respect to $\text{cap}_{p}(\cdot)$. Testing the equations 
\begin{equation}\label{eq_test_1}
\begin{split}
& \int_{\R^n} \vert \nabla u \vert^{p-2} \nabla u \cdot \nabla \phi \; dx \\ = & \int_{\R^n} u^{q}\phi \;d\sigma + \int_{\R^n} \phi \;d\mu, \qquad 
\phi \in \dot{W_{0}}^{1,p}(\R^n),\end{split}
\end{equation}
\begin{equation}\label{eq_test_2}
\begin{split}& \int_{\R^n} \vert \nabla v \vert^{p-2} \nabla v \cdot \nabla \psi \; dx \\ =  &\int_{\R^n} v^{q}\psi \;d\sigma + \int_{\R^n} \psi \;d\mu, \qquad 
\psi \in \dot{W}_{0}^{1,p}(\R^n),
\end{split}
\end{equation}
with $\phi = u$ and $\psi = v$, respectively, where $\omega =u^q \sigma+  \mu \in  \dot{W}^{-1,p'}(\R^n)$, so that Theorem \ref{representative_thm} is applicable for 
quasi-continuous representatives of $u$ and $v$, 
we obtain 

\begin{equation}\label{eq_tested_1}
\int_{\R^n} \vert \nabla u \vert^{p} \; dx 
=  \int_{\R^n} u^{1+q} \;d\sigma + \int_{\R^n} u \;d\mu, \;\; \text{and}
\end{equation}

\begin{equation}\label{eq_tested_2}
\int_{\R^n} \vert \nabla v \vert^{p} \; dx 
=  \int_{\R^n} v^{1+q} \;d\sigma + \int_{\R^n} v \;d\mu.
\end{equation}
Using convexity of the Dirichlet integral $\int_{\R^n} \vert \nabla \cdot \vert^{p}\;dx$
along curves of the type 
\[
\lambda_{t}(x) := \left[ (1-t)v^{p}(x) + tu^{p}(x) \right]^{\frac{1}{p}}, \quad t \in [0,1],
\]
see \cite[Proposition 2.6]{BF14}, we obtain
\[
\begin{split}
\int_{\R^n} \vert \nabla \lambda_{t} \vert^{p} \; dx 
&\leq (1-t) \int_{\R^n} \vert \nabla v \vert^{p} \;dx + t \int_{\R^n} \vert \nabla u \vert^{p} \; dx \\
&= t \left( \int_{\R^n} \vert \nabla u \vert^{p} \; dx - \int_{\R^n} \vert \nabla v \vert^{p} \; dx\right) + \int_{\R^n} \vert \nabla v \vert^{p} \; dx.
\end{split}
\]
Notice that $\lambda_{0} = v$. By \eqref{eq_tested_1} and 
\eqref{eq_tested_2}, we get 
\[
\int_{\R^n} \frac{\vert \nabla \lambda_{t} \vert^{p} - \vert \nabla \lambda_{0} \vert^{p}}{t} \; dx
\leq \int_{\R^n} (u^{1+q} - v^{1+q}) \;d\sigma + \int_{\R^n} (u-v) \;d\mu.
\]
Using the inequality
\[
\vert a \vert^{p} - \vert b \vert^{p} \geq p \vert b \vert^{p-2} b \cdot (a-b) \qquad \text{for} \;\; a,b \in \R^n,
\]
we deduce that
\[
\vert \nabla \lambda_{t} \vert^{p} - \vert \nabla \lambda_{0} \vert^{p} 
\geq p \vert \nabla \lambda_{0} \vert^{p-2} \nabla \lambda_{0} \cdot (\nabla \lambda_{t}-\nabla \lambda_{0}),
\]
and hence 
\begin{equation}\label{est1_uniqueness_thm2}
p \int_{\R^n} \vert \nabla v \vert^{p-2} \nabla v \cdot \frac{\nabla(\lambda_{t} - \lambda_{0})}{t}  \; dx
\leq \int_{\R^n} (u^{1+q} - v^{1+q}) \;d\sigma + \int_{\R^n} (u-v) \;d\mu.
\end{equation}
Testing \eqref{eq_test_2} with $\psi = \lambda_t - \lambda_0 \in \dot{W}^{1,p}_{0}(\R^n)$, we obtain 
\begin{equation}\label{eq_test_2.1}
\int_{\R^n} \vert \nabla v \vert^{p-2} \nabla v \cdot \nabla (\lambda_t - \lambda_0) \; dx 
=  \int_{\R^n} v^{q} (\lambda_t - \lambda_0) \;d\sigma + \int_{\R^n}  (\lambda_t - \lambda_0) \;d\mu.
\end{equation}
Thus, by \eqref{est1_uniqueness_thm2} and \eqref{eq_test_2.1}, we have
\begin{equation}\label{est2_uniqueness_thm2}
p \int_{\R^n} v^{q}  \frac{\lambda_t - \lambda_0 }{t}\;d\sigma
+ p \int_{\R^n} \frac{\lambda_t - \lambda_0 }{t}\;d\mu
\leq \int_{\R^n} (u^{1+q} - v^{1+q}) \;d\sigma 
+ \int_{\R^n} (u-v) \;d\mu.
\end{equation}
Since $u \geq v$ q.e. then $\lambda_t \geq \lambda_0 \;\; d\sigma$-a.e. and $\lambda_t \geq \lambda_0 \;\; d\mu$-a.e. Applying Fatou's Lemma, we obtain
\begin{equation}\label{est3_uniqueness_thm2}
\int_{\R^n} v^{q}  \frac{u^p - v^p}{v^{p-1}} \;d\sigma 
\leq \liminf_{t \rightarrow 0} p \int_{\R^n} v^{q}  \frac{\lambda_t - \lambda_0}{t} \;d\sigma
\end{equation}
and 
\begin{equation}\label{est4_uniqueness_thm2}
\int_{\R^n}  \frac{u^p - v^p}{v^{p-1}} \;d\mu
\leq \liminf_{t \rightarrow 0} p \int_{\R^n} \frac{\lambda_t - \lambda_0}{t} \;d\mu.
\end{equation}
Since \eqref{est2_uniqueness_thm2} holds for all $t \in [0,1]$ then by 
\eqref{est3_uniqueness_thm2} and \eqref{est4_uniqueness_thm2}, we arrive at 
\[
\int_{\R^n}  \frac{u^p v^q}{v^{p-1}} - v^{1+q} \;d\sigma  +
\int_{\R^n}  \frac{u^p}{v^{p-1}} - v \;d\mu
\leq \int_{\R^n} (u^{1+q} - v^{1+q}) \;d\sigma + \int_{\R^n} (u-v) \;d\mu,
\] 
that is,
\[
\int_{\R^n}  \frac{u^p v^q}{v^{p-1}} - u^{1+q} \;d\sigma  +
\int_{\R^n}  \left( \frac{u^p}{v^{p-1}}  - u \right) \;d\mu
\leq  0.
\] 
Here both integrals on the left-hand side are nonnegative since $u \geq v$ $d\sigma$-a.e. and $u \geq v$ $d\mu$-a.e. Indeed,
\[
\begin{split}
\int_{\R^n}  \frac{u^p v^q}{v^{p-1}} - u^{1+q} \;d\sigma 
&= \int_{\R^n}  \frac{u^p v^q - u^{1+q} v^{p-1}}{v^{p-1}}  \;d\sigma  \\
& = \int_{\R^n}  \frac{u^{1+q}v^{q} (u^{p-1-q} - v^{p-1-q})}{v^{p-1}}\;d\sigma  \\
&\geq 0
\end{split}
\]
and 
\[
\int_{\R^n}  \left( \frac{u^p}{v^{p-1}}  - u \right) \;d\mu 
= \int_{\R^n}  \frac{u^{p} - uv^{p-1}}{v^{p-1}}\;d\mu \geq 0.
\]
Therefore, both integrals must vanish, and thus $u=v$ $\;d\sigma$-a.e. and 
$u=v$ $\;d\mu$-a.e. In particular, this proves the second claim.

Now, suppose $\tilde{w}$ is any positive finite energy solution to \eqref{main_eq_p-lapacain}. Then 
\[
  \tilde{w} \geq w \quad \text{q.e.},
\]
where $w$ is the minimal positive finite energy solution to \eqref{main_eq_p-lapacain} constructed in Theorem \ref{thm_main}. 
Applying the second claim above, we have 
\[
\tilde{w} = w  \qquad d\sigma\text{-a.e.},
\]
and hence, by the first claim, they coincide as elements of $\dot{W}^{1,p}_{0}(\R^n)$.
\end{proof}

By a slight modification of the argument above, we can establish the uniqueness of a positive finite energy solution to equation \eqref{main_eq_frac_laplacian} when $0<\alpha  \leq 1$.

\begin{Thm}\label{uniqueness_fractional}
Let $0<q<1$, $0< \alpha \leq 1$, and $\sigma, \mu \in \mathcal{M}^{+}(\mathbb{R}^n)$. Suppose there exists a positive finite energy solution to equation \eqref{main_eq_frac_laplacian}. Then such a solution is unique in $\dot{H}^{\alpha}(\R^n)$.
\end{Thm}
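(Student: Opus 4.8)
The plan is to reproduce the argument of Theorem~\ref{uniqueness_p-Laplacian} with $p=2$, replacing the Dirichlet integral $\int_{\R^{n}}|\nabla\cdot|^{p}\,dx$ by $E(w):=\int_{\R^{n}}|(-\Delta)^{\frac{\alpha}{2}}w|^{2}\,dx$ throughout, and invoking the hidden convexity of $E$ along the curves $\lambda_{t}:=\bigl((1-t)v^{2}+tu^{2}\bigr)^{1/2}$ established in \cite{BF14}, which is available precisely for $0<\alpha\le1$; this is the source of the restriction on $\alpha$. Let $u,v$ be two positive finite energy solutions to \eqref{main_eq_frac_laplacian}. I would first isolate two claims, exactly as in Theorem~\ref{uniqueness_p-Laplacian}. \emph{Claim 1: if $u=v$ $d\sigma$-a.e., then $u=v$ as elements of $\dot H^{\alpha}(\R^{n})$.} Indeed, both solutions satisfy $(-\Delta)^{\frac{\alpha}{2}}u=\I_{\alpha}(u^{q}d\sigma)+\I_{\alpha}\mu$ and the analogous identity for $v$; if $u=v$ $d\sigma$-a.e. the right-hand sides coincide, so $(-\Delta)^{\frac{\alpha}{2}}u=(-\Delta)^{\frac{\alpha}{2}}v$ a.e., whence $\|u-v\|_{\dot H^{\alpha}(\R^{n})}=\|(-\Delta)^{\frac{\alpha}{2}}(u-v)\|_{L^{2}(\R^{n})}=0$. \emph{Claim 2: if $u\ge v$ q.e., then $u=v$ $d\sigma$-a.e.} Granting the claims, the theorem follows: by Theorem~\ref{thm_converse_1} there is a minimal positive finite energy solution $w$ with $w\le\tilde w$ q.e. for every positive finite energy solution $\tilde w$, so Claim~2 gives $\tilde w=w$ $d\sigma$-a.e. and then Claim~1 gives $\tilde w=w$ in $\dot H^{\alpha}(\R^{n})$.

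The heart of the matter is Claim~2. Assume $u\ge v$ q.e. By Remark~\ref{Rem_Def_2}, $\sigma$ and $\mu$ are absolutely continuous with respect to $\text{cap}_{\alpha,2}(\cdot)$, so $u\ge v$ holds also $d\sigma$-a.e. and $d\mu$-a.e. By Lemma~\ref{lemma_converse_2} we have $u,v\in L^{1+q}(\R^{n},d\sigma)$ and $\mu\in\dot H^{-\alpha}(\R^{n})$, and by Lemma~\ref{relation_of_conditions} also $\I_{2\alpha}\mu\in L^{1+q}(\R^{n},d\sigma)$; these integrability facts legitimize pairing $(-\Delta)^{\alpha}u$ with $u$ (using $\I_{\alpha}(-\Delta)^{\frac{\alpha}{2}}u=u$, Tonelli, and H\"older) and give the energy identities
\[
E(u)=\int_{\R^{n}}u^{1+q}\,d\sigma+\int_{\R^{n}}u\,d\mu,\qquad
E(v)=\int_{\R^{n}}v^{1+q}\,d\sigma+\int_{\R^{n}}v\,d\mu .
\]

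Now set $\lambda_{t}=\bigl((1-t)v^{2}+tu^{2}\bigr)^{1/2}$ for $t\in[0,1]$, so $\lambda_{0}=v$. By \cite{BF14} each $\lambda_{t}\in\dot H^{\alpha}(\R^{n})$ and $E(\lambda_{t})\le(1-t)E(v)+tE(u)$. Combining this with the elementary inequality $|a|^{2}-|b|^{2}\ge2\,b\cdot(a-b)$ applied pointwise to $a=(-\Delta)^{\frac{\alpha}{2}}\lambda_{t}$, $b=(-\Delta)^{\frac{\alpha}{2}}v$, integrating, dividing by $t$, and using the energy identities yields
\[
\frac{2}{t}\int_{\R^{n}}(-\Delta)^{\frac{\alpha}{2}}v\cdot(-\Delta)^{\frac{\alpha}{2}}(\lambda_{t}-\lambda_{0})\,dx
\le\int_{\R^{n}}(u^{1+q}-v^{1+q})\,d\sigma+\int_{\R^{n}}(u-v)\,d\mu .
\]
Testing the equation for $v$ against $\lambda_{t}-\lambda_{0}\in\dot H^{\alpha}(\R^{n})$ rewrites the left side as $\frac{2}{t}\int v^{q}(\lambda_{t}-\lambda_{0})\,d\sigma+\frac{2}{t}\int(\lambda_{t}-\lambda_{0})\,d\mu$. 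Since $u\ge v$ forces $\lambda_{t}\ge\lambda_{0}$ $d\sigma$-a.e. and $d\mu$-a.e., and $(\lambda_{t}-\lambda_{0})/t\to(u^{2}-v^{2})/(2v)$ as $t\to0^{+}$, Fatou's lemma gives
\[
\int_{\R^{n}}\frac{v^{q}(u^{2}-v^{2})}{v}\,d\sigma+\int_{\R^{n}}\frac{u^{2}-v^{2}}{v}\,d\mu
\le\int_{\R^{n}}(u^{1+q}-v^{1+q})\,d\sigma+\int_{\R^{n}}(u-v)\,d\mu ,
\]
that is,
\[
\int_{\R^{n}}\Bigl(\frac{u^{2}v^{q}}{v}-u^{1+q}\Bigr)d\sigma+\int_{\R^{n}}\Bigl(\frac{u^{2}}{v}-u\Bigr)d\mu\le0 .
\]
Both integrands are nonnegative when $u\ge v$ and $0<q<1$: the first equals $u^{1+q}\bigl((u/v)^{1-q}-1\bigr)\ge0$ and the second equals $u(u-v)/v\ge0$. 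Hence both integrals vanish, forcing $u=v$ $d\sigma$-a.e., which is Claim~2.

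I expect the main obstacle to be the rigorous justification of the fractional hidden-convexity/chain-rule facts — namely $\lambda_{t}\in\dot H^{\alpha}(\R^{n})$ together with $E(\lambda_{t})\le(1-t)E(v)+tE(u)$ — which is exactly the place where $0<\alpha\le1$ is essential and which I would quote from \cite{BF14}; alongside a careful verification that all functions involved ($u$, $v$, $\lambda_{t}-\lambda_{0}$, and $(\lambda_{t}-\lambda_{0})/t$) lie in $L^{1}$ of the measures $v^{q}d\sigma$, $u^{q}d\sigma$ and $\mu$, so that the energy identities, the testing identity, and the passage to the limit via Fatou are all legitimate. Every remaining step is a verbatim specialization of the convexity-plus-minimality scheme of Theorem~\ref{uniqueness_p-Laplacian}.
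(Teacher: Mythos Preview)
Your proposal is correct and follows essentially the same route as the paper: the paper's proof consists of a single sentence saying that one repeats the argument of Theorem~\ref{uniqueness_p-Laplacian} (the $p=2$ specialization) using the convexity of the Gagliardo seminorm from \cite{BF14} in place of the convexity of $\int|\nabla\cdot|^{p}\,dx$, and you have written out precisely that argument in detail. The only cosmetic difference is that you phrase the convexity in terms of $E(w)=\|(-\Delta)^{\alpha/2}w\|_{L^{2}}^{2}$ rather than the Gagliardo seminorm; since for $0<\alpha<1$ these differ only by a dimensional constant, the convexity inequality $E(\lambda_{t})\le(1-t)E(v)+tE(u)$ is equivalent to the Gagliardo version quoted from \cite{BF14}, so no adjustment is needed.
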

\begin{proof}
When $\alpha =1$, this follows from Theorem \ref{uniqueness_p-Laplacian} in the case $p=2$. If $0<\alpha<1$, we use the same argument as in the proof of Theorem \ref{uniqueness_p-Laplacian} together with convexity of Gagliardo seminorms established in \cite{BF14}, instead of convexity of the Dirichlet integrals $\int_{\R^n} |\nabla \cdot|^{p}\;dx$.
\end{proof}

Since convexity of the Dirichlet integrals $\int_{\Omega} |\nabla \cdot|^2\;dx$ is also available on arbitrary nonempty open sets $\Omega \subset \R^n$ (see \cite{BF14}), we may argue in the same way as in the proof of Theorem \ref{uniqueness_p-Laplacian} in the case $p=2$ to obtain the following theorem on the uniqueness of a positive finite energy solution to equation \eqref{main_eq_frac_laplacian_domain}.

\begin{Thm}\label{uniqueness_domain}
Let $0<q<1$, and let $\sigma, \mu \in \mathcal{M}^{+}(\Omega)$. Suppose there exists a positive finite energy solution to equation \eqref{main_eq_frac_laplacian_domain}. Then such a solution is unique
in $\dot{W}^{1,2}_{0}(\Omega)$.
\end{Thm}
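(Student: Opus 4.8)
The plan is to follow the proof of Theorem~\ref{uniqueness_p-Laplacian} almost verbatim, specialized to $p=2$, since the convexity of the Dirichlet integral $\int_\Omega |\nabla\cdot|^2\,dx$ along the relevant curves is available on an arbitrary open set by \cite{BF14}. So I take two positive finite energy solutions $u$ and $v$ to \eqref{main_eq_frac_laplacian_domain}. First I would record the preliminary facts supplied by Lemma~\ref{lemma_converse_2_domain} (applied to $u$ and to $v$, each being in particular a supersolution): for quasicontinuous representatives one has $u,v\in L^{1+q}(\Omega,d\sigma)$, and the measures $d\omega_u:=u^qd\sigma+d\mu$, $d\omega_v:=v^qd\sigma+d\mu$ lie in $\dot{W}^{-1,2}(\Omega)\cap\mathcal{M}^+(\Omega)$; moreover $\sigma$ and $\mu$ are absolutely continuous with respect to $\mathrm{cap}_{1,2}(\cdot)$ (for $\sigma$ via Lemma~\ref{lemma_cap} with $p=2$ applied to the integral equation \eqref{int_eq_domain00} satisfied by $u$; for $\mu$ because $\mu\in\dot{W}^{-1,2}(\Omega)$). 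Since $G$ is Green's function of $-\Delta$ on $\Omega$, a finite energy solution of $-\Delta w=\nu$ with $\nu\in\dot{W}^{-1,2}(\Omega)\cap\mathcal{M}^+(\Omega)$ equals $\g\nu$; hence $u$ and $v$ both solve the integral equation \eqref{int_eq_domain00}.

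The heart of the argument is the same pair of claims as in Theorem~\ref{uniqueness_p-Laplacian}. \emph{Claim (i): if $u=v$ $d\sigma$-a.e., then $u=v$ in $\dot{W}^{1,2}_0(\Omega)$.} Indeed $\omega_u=\omega_v=:\omega\in\dot{W}^{-1,2}(\Omega)$ and both $u,v$ solve the linear problem $-\Delta w=\omega$ in $\dot{W}^{1,2}_0(\Omega)$, whose solution is unique by strict convexity of $\|\nabla\cdot\|_{L^2(\Omega)}^2$ (Lax--Milgram), so $u=v$ q.e. \emph{Claim (ii): if $u\ge v$ q.e., then $u=v$ $d\sigma$-a.e.} Since $\sigma,\mu$ are absolutely continuous with respect to $\mathrm{cap}_{1,2}(\cdot)$, $u\ge v$ both $d\sigma$- and $d\mu$-a.e.; using $\omega_u,\omega_v\in\dot{W}^{-1,2}(\Omega)$ and Theorem~\ref{representative_thm} I would test the weak formulations of \eqref{main_eq_frac_laplacian_domain} for $u$ and $v$ with $u$ and $v$, obtaining the energy identities $\int_\Omega|\nabla u|^2\,dx=\int_\Omega u^{1+q}\,d\sigma+\int_\Omega u\,d\mu$ and similarly for $v$; then, exactly as in Theorem~\ref{uniqueness_p-Laplacian}, combine the convexity of $\int_\Omega|\nabla\cdot|^2\,dx$ along $\lambda_t:=[(1-t)v^2+tu^2]^{1/2}$, $t\in[0,1]$ (valid on arbitrary $\Omega$ by \cite[Proposition~2.6]{BF14}), the inequality $|a|^2-|b|^2\ge 2\,b\cdot(a-b)$, the test of the $v$-equation with $\lambda_t-\lambda_0\in\dot{W}^{1,2}_0(\Omega)$, and Fatou's lemma as $t\to0$, to reach
\[
\int_\Omega\Big(\frac{u^2v^q}{v}-u^{1+q}\Big)\,d\sigma+\int_\Omega\Big(\frac{u^2}{v}-u\Big)\,d\mu\le0 .
\]
Since $u\ge v>0$ and $0<q<1$, the first integrand equals $u^{1+q}v^{q-1}(u^{1-q}-v^{1-q})\ge0$ and the second equals $u(u-v)/v\ge0$, so both vanish, forcing $u=v$ $d\sigma$-a.e.

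To finish: let $w$ be the minimal positive finite energy solution constructed in Lemma~\ref{thm_converse_1_domain} — its hypotheses \eqref{condition1_domain} and \eqref{condition3_domain} hold by Lemma~\ref{lemma_converse_2_domain} — and let $\tilde w$ be any positive finite energy solution. As noted above $\tilde w$ solves \eqref{int_eq_domain00}, so by minimality $w\le\tilde w$ q.e.; by Claim (ii) $\tilde w=w$ $d\sigma$-a.e., and by Claim (i) they coincide in $\dot{W}^{1,2}_0(\Omega)$, which gives uniqueness. I expect the only slightly delicate points — rather than a genuine obstacle — to be the passage from the PDE formulation to the Green-potential integral equation (needed to invoke Lemma~\ref{thm_converse_1_domain}) and the verification that convexity of the Dirichlet integral along the curves $\lambda_t$ survives on a general, possibly unbounded, $\Omega$; both are handled by the cited results, so no essentially new difficulty arises beyond the $p$-Laplacian case.
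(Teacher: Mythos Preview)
Your proposal is correct and follows precisely the approach the paper takes: the paper's own proof of this theorem is a one-sentence remark that one argues exactly as in Theorem~\ref{uniqueness_p-Laplacian} with $p=2$, using the convexity of $\int_\Omega|\nabla\cdot|^2\,dx$ on arbitrary open sets from \cite{BF14}. You have simply spelled out those details (the two claims, the curve $\lambda_t$, and the appeal to the minimal solution of Lemma~\ref{thm_converse_1_domain}) more explicitly than the paper does.
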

\begin{bibdiv}
\begin{biblist}
 
 \bib{AH96}{book}{
author={D. R. Adams},
author={L. I. Hedberg},
title={Function Spaces and Potential Theory},
date={1996},
series={Grundlehren der math. Wissenschaften},
volume={314},
publisher={Springer},
address={Berlin--Heidelberg--New York}
 }
 
 \bib{Anc02}{article}{
   author={A. Ancona},
   title={Some results and examples about the behavior of harmonic functions
   and Green's functions with respect to second order elliptic operators},
   journal={Nagoya Math. J.},
   volume={165},
   date={2002},
   pages={123--158},
}

 \bib{Bog09}{book}{
author={K. Bogdan},
author={T. Byczkowski},
author={T. Kulczycki},
author={M. Ryznar},
author={R. Song},
author={Z. Vondracek},
title={Potential Analysis of Stable Processes and its Extensions},
date={2009},
series={Lecture Notes Math.},
volume={1980},
publisher={Springer},
address={Berlin}
 }
 
 \bib{BF14}{article}{
   author={L. Brasco},
   author={G. Franzina},
   title={Convexity properties of Dirichlet integrals and Picone-type
   inequalities},
   journal={Kodai Math. J.},
   volume={37},
   date={2014},
   number={3},
   pages={769--799}
}

\bib{BrB79}{article}{
author={H. Brezis},
author={F. E. Browder},
title={A property of Sobolev spaces},
journal={Commun. PDE},
date={1979},
volume={44},
pages={1077--1083}
 }
 
 \bib{BK92}{article}{
 author={H. Brezis},
 author={S. Kamin},  
 title={Sublinear elliptic equations on $\mathbb{R}^n$,} 
 journal={Manuscr.  Math.} 
 date={1992},
volume={74}, 
pages={87--106}
 }
 
\bib{COV00}{article}{
author={C. Cascante},
author={J. M. Ortega},
author={I. E. Verbitsky},
title={Trace inequalities of Sobolev type in the upper triangular case},
journal={Proc. London Math. Soc.},
date={2000},
volume={80},
pages={391--414}
 }
 
 \bib{COV06}{article}{
   author={C. Cascante},
   author={J. M. Ortega},
   author={I. E. Verbitsky},
   title={On $L^p$--$L^q$ trace inequalities},
   journal={J. London Math. Soc.},
   volume={74},
   date={2006},
   pages={497--511}
}

\bib{CV14a}{article}{
author={D. T. Cao},
author={I. E. Verbitsky},
title={Finite energy solutions of quasilinear elliptic equations with sub-natural growth terms},
journal={Calc. Var. PDE},
date={2015},
volume={52},
pages={529--546}
}
 
\bib{CV14b}{article}{
  author={D. T. Cao},
  author={I. E. Verbitsky},
   title={Nonlinear elliptic equations and intrinsic potentials of Wolff type},
   journal={J. Funct. Anal.},
   volume={272},
   date={2017},
   pages={112--165}
}

 \bib{FNV14}{article}{
   author={M. Frazier},
   author={F. Nazarov},
   author={I. Verbitsky},
   title={Global estimates for kernels of Neumann series and Green's
   functions},
   journal={J. London Math. Soc.},
   volume={90},
   date={2014},
   pages={903--918}
}

\bib{FV}{article}{
  	author = {M. Frazier},
    author={I. Verbitsky},
    title = {Positive solutions to Schr\"{o}dinger's equation and the exponential integrability of the balayage},
  	journal = {Ann. Inst. Fourier (Grenoble) (published online)}, 
	   date={2017},
  	note={\url{arXiv:1509.09005}}
}

\bib{HN12}{article}{
   author={W. Hansen},
   author={I. Netuka},
   title={On the Picard principle for $\Delta+\mu$},
   journal={Math. Z.},
   volume={270},
   date={2012},
   pages={783--807}
}

\bib{HW83}{article}{
author={L. I. Hedberg},
author={T. Wolff},
title={Thin sets in nonlinear potential theory},
journal={Ann. Inst. Fourier (Grenoble)},
date={1983},
volume={33},
pages={161--187}
 }
 
\bib{HKM06}{book}{
author={J. Heinonen},
author={T. Kilpel{\"a}nen},
author={O. Martio},
title={Nonlinear Potential Theory of Degenerate Elliptic Equations},
publisher={Dover Publ.},  
edition = {unabridged republ. of 1993 ed.},
date={2006}
volume={}
pages={}
 }

\bib{KKT09}{article}{
author={T. Kilpel{\"a}inen},
author={T. Kuusi},
author={A. Tuhola-Kujanp{\"a}{\"a}},
title={Superharmonic functions are locally renormalized solutions},
journal={Ann. Inst. H. Poincar{\'e},  Anal. Non Lin{\'e}aire},
date={2011},
volume={28},
pages={775--795}
 }
 
\bib{KM92}{article}{
author={T. Kilpel{\"a}inen},
author={J. Mal{\'y}},
title={Degenerate elliptic equations with measure data and nonlinear potentials},
journal={Ann. Scuola Norm. Super. Pisa, Cl. Sci.},
date={1992},
volume={19},
pages={591--613}
 }
 
\bib{KM94}{article}{
author={T. Kilpel{\"a}inen},
author={J. Mal{\'y}},
title={The Wiener test and potential estimates for quasilinear elliptic equations},
journal={Acta Math.},
date={1994},
volume={172},
pages={137--161}
}

\bib{KuMi14}{article}{
author= {T. Kuusi}, 
author= {G. Mingione},
title={Guide to nonlinear potential estimates},  
journal={Bull. Math. Sci.}, 
date={2014}, 
volume={4},
pages={1--82} 
}

 \bib{MZ97}{book}{
author={J. Mal{\'y}},
author={W. Ziemer},
title={Fine Regularity of Solutions of Elliptic Partial Differential Equations},
date={1997},
series={Math. Surveys Monogr.},
volume={51},
publisher={Amer. Math. Soc.}
address={Providence, RI}
 }
 
 \bib{Maz11}{book}{
author={V. Maz'ya},
title={Sobolev Spaces, with Applications to Elliptic Partial Differential Equations},
edition = {2nd revised augm. ed.}, 
date={2011},
series={Grundlehren der math. Wissenschaften},
volume={342},
publisher={Springer}
address={Berlin}
 }

\bib{PV08}{article}{ 
author={Nguyen C. Phuc},
author={I. E. Verbitsky}, 
title={Quasilinear and Hessian equations of Lane--Emden type,} 
journal={Ann. Math.},  
volume={168},
year={2008}, 
pages={859--914}
}

 \bib{QV16}{article}{
author={S. Quinn},
author={I. Verbitsky},
title={Weighted norm inequalities of $(1,q)$-type for integral and fractional maximal operators}, 
journal={Harmonic Analysis, Partial Differential Equations
and Applications, in Honor of Richard L. Wheeden,  eds. S. Chanillo et al., 
Birkh\"{a}user, Ser. Appl. Numer. Harmonic Anal.}, 
volume={},
year={2017}, 
pages={217--238}
}
 
 \bib{QV17}{article}{
author={S. Quinn},
author={I. Verbitsky},
title={A sublinear version of Schur's lemma and elliptic PDE},
journal={Analysis \& PDE},
volume={},
year={to appear}, 
note= {\url{arXiv: 1702.02682}}
}
 
\bib{St70}{book}{
title =     {Singular Integrals and Differentiability Properties of 
               Functions},
author =    {E. M. Stein},
publisher = {Princeton University Press},
isbn =      {},
year =      {1970},
series =    {Princeton Math. Series},
address =   {Princeton, NJ},
volume =    {30},
}

\bib{TW02}{article}{
author={N. S. Trudinger},
author={X. J. Wang},
title={On the weak continuity of elliptic operators and applications to potential theory},
journal={Amer. J. Math.},
year={2002},
volume={124},
pages={369--410}
}

\bib{V17a}{article}{
author={I. E. Verbitsky},
title={Sublinear equations and Schur's test for integral operators},
journal={50 Years with Hardy Spaces, a Tribute to Victor Havin, 
eds. A. Baranov et al., Birkh\"{a}user, Ser. Operator Theory: Adv. Appl.},
volume={261}, 
year={2017}
pages={465--482}
}

\end{biblist}
\end{bibdiv}
\end{document}